\newif\ifHAL
\HALtrue
 
\ifHAL
\documentclass[11pt, a4paper, twoside, oneside]{article}
\else
\documentclass{siamltex}
\fi


\usepackage[utf8]{inputenc}  
\usepackage[T1]{fontenc} 
\usepackage[left=2cm, right=2cm, top=2cm, bottom=2.5cm]{geometry}
\usepackage[english]{babel} 
\usepackage{lmodern}

\usepackage{etoolbox}
\ifHAL 
\usepackage{authblk}
\fi

\usepackage{textcomp}                
\usepackage{gensymb}                 
\usepackage{enumerate}               
\usepackage{enumitem}                
\usepackage{titlesec}                
\usepackage{color}                   
\ifHAL
\usepackage[dvipsnames]{xcolor}      
\else
\usepackage{xcolor}
\fi
\definecolor{ceared}{HTML}{BB0000} 
\usepackage{indentfirst}             
\usepackage{dsfont}
\usepackage{fancyhdr}                
\usepackage{chngpage}                
\usepackage[symbol]{footmisc}        
\usepackage{multicol}
\usepackage{float}

\usepackage{diagbox}
\usepackage[font=small, skip=10pt]{caption} 
\usepackage{subcaption}                     
\usepackage{tikz}                           
\usepackage{tkz-euclide}                    
\usetikzlibrary{patterns, patterns.meta}    
\usepackage{graphicx}
\usepackage{pgfplots}
\usetikzlibrary{shapes, positioning}
\pgfplotsset{compat=1.16}
\captionsetup{figurename= \bf Fig. }
\captionsetup{tablename= \bf Tab. }
\usepackage{multirow}
\usepackage{placeins}

\usepackage{amssymb} 
\ifHAL
\usepackage{amsmath,amsfonts,amsthm}
\fi
\usepackage{stmaryrd}
\usepackage{mathtools}                
\usepackage{empheq,etoolbox}          
\usepackage{arydshln}                 
\usepackage{wasysym}                  
\usepackage{mathbbol}
\usepackage{mathrsfs}
\usepackage{yfonts}
\ifHAL
\newtheorem{theorem}{Theorem}[section]

\newtheorem{lemma}[theorem]{Lemma}

\fi

\usepackage{pdfpages}
\usepackage{bbm}
\usepackage{scalerel}

\ifHAL
\titleformat*{\section}{\LARGE\bfseries}
\titleformat*{\subsection}{\Large\bfseries}
\titleformat*{\subsubsection}{\large\bfseries}
\titlespacing*{\section}{0pt}{0.25cm}{1em}
\titlespacing*{\subsection}{0pt}{0.75em}{1em}

\usepackage{nameref} 
\usepackage[hidelinks]{hyperref} 
\usepackage{cleveref}

\usepackage[subfigure]{tocloft}      
\usepackage{titletoc}
\setcounter{tocdepth}{2}


\setlength{\cftbeforesecskip}{5pt}
\thispagestyle{empty}
\fi
\input mydef.sty

\graphicspath{{images/}}

\newcommand{\cuthereq}{\\ & \qquad}
\newcommand{\cuthere}{\\ &}

\newcommand{\rev}[1]{{\color{black}#1}}

\begin{document}

\newcommand\footnotemarkfromtitle[1]{%
\renewcommand{\thefootnote}{\fnsymbol{footnote}}%
\footnotemark[#1]%
\renewcommand{\thefootnote}{\arabic{footnote}}}

\ifHAL
\makeatletter
\renewcommand{\and}{\unskip\enspace \qquad} 
\renewcommand{\author}[1]{\gdef\@author{#1}}
\makeatother
\date{}
\newenvironment{keywords}
{\vspace{-0.25cm} \begin{adjustwidth}{1cm}{1cm}
\noindent \textbf{Keywords.} }
{\end{adjustwidth}}
\newenvironment{AMS}
{\begin{adjustwidth}{1cm}{1cm}
\noindent \textbf{Mathematics Subjects Classification.} }
{\end{adjustwidth}}
\else
\date{Draft version \today}
\footnotetext[1]{Draft version, \today}
\fi

\title{\ifHAL \vspace{-1.75cm} \else \fi Unfitted hybrid high-order methods stabilized by polynomial extension for elliptic interface problems \ifHAL \else \footnotemark[1] \fi }

\author{Erik Burman\footnotemark[2] \and Alexandre Ern\footnotemark[3] \and
Romain Mottier\footnotemark[4]}

\maketitle
\ifHAL
\vspace{-1.5cm}
\fi

\renewcommand{\thefootnote}{\fnsymbol{footnote}}
\footnotetext[2]{Department of Mathematics, University College London, London WC1E 6BT, UK.}
\footnotetext[3]{CERMICS, ENPC, Institut Polytechnique de Paris, F-77455 Marne-la-Vall\'ee cedex 2, and Centre Inria de Paris, 48 rue Barrault, CS 61534, F-75647 Paris, France.}
\footnotetext[4]{CEA, DAM, DIF, F-91297 Arpajon, France, and CERMICS, ENPC, Institut Polytechnique de Paris, F-77455 Marne-la-Vall\'ee cedex 2, and Centre Inria de Paris, 48 rue Barrault, CS 61534, F-75647 Paris, France.}
\renewcommand{\thefootnote}{\arabic{footnote}}
\begin{abstract}
In this work, we study the design and analysis of a novel hybrid high-order (HHO) method on unfitted meshes. HHO methods rely on a pair of unknowns, combining polynomials attached to the mesh faces and the mesh cells. In the unfitted framework, the interface can cut through the mesh cells in a very general fashion, and the polynomial unknowns are doubled in the cut cells and the cut faces. In order to avoid the ill-conditioning issues caused by the presence of small cut cells, the novel approach introduced herein is to use polynomial extensions in the definition of the gradient reconstruction operator. Stability and consistency results are established, leading to optimally decaying error estimates. The theory is illustrated by numerical experiments.
\end{abstract}
\begin{keywords}
Hybrid high-order methods, Unfitted methods, Polynomial extension, Interface problem, Curved boundary.
\end{keywords}
\begin{AMS}
65N15, 65N30, 65N85.
\end{AMS}
\ifHAL
\else
\pagestyle{myheadings} 
\thispagestyle{plain}
\markboth{E. Burman, A. Ern and R. Mottier}{Unfitted HHO methods stabilized by polynomial extension}
\fi

\section{Introduction}

The meshing of complex domains or of internal interfaces is an important bottleneck in the large-scale computation of solutions to partial differential equations. It is well-known that, for standard finite element methods, the mesh must match the geometry sufficiently well, or accuracy will be lost. This is particularly important for high-order methods. Recently, many methods have been developed aiming at simplifying the meshing procedure by allowing for a larger set of geometric shapes for the computational cells. Examples of discretization methods supporting fairly general polygonal (or polyhedral) cells are the discontinuous Galerkin (dG) method \cite{DiPEr:12,CDGH17}, the virtual element method (VEM) \cite{BBCMMR13}, 
and hybrid nonconforming methods in various flavors, such as the hybridizable dG (HDG) method \cite{CGL09}, the weak Galerkin (WG) method \cite{WY13}, the nonconforming VEM (ncVEM) method \cite{AyLiM:16} and the hybrid high-order (HHO) method \cite{DPEL14}. The tight connections between HDG, WG, and HHO methods are highlighted in \cite{CoDPE16}. 
These three methods are formulated by means of degrees of freedom (dofs) both in the bulk of the elements and on the element faces, whereas dG methods only use bulk dofs. Using face dofs leads to advantageous properties, such as the elimination of bulk dofs by static condensation and the absence of minimal threshold to weigh the stabilization. 

Nevertheless, difficulties remain when using hybrid nonconforming methods in the presence of curved boundaries or interfaces, whereas the treatment of cells with curved faces in dG methods is more straightforward. One possibility is to enrich (with non-polynomial functions) the set of unknowns on curved faces, as in \cite{Yemm:24} for HHO and in \cite{BdVLMR:24} for ncVEM. Another possibility is to work with mesh cells having flat faces, but to build the geometric representation directly into the discretization, in such a way that solution accuracy is not lost although the mesh does not respect the geometry. This idea has been developed first in the framework of unfitted finite element methods. These methods originate from the penalty method \cite{Bab73, BE86} and were further developed for conforming FEM in \cite{BE84}. Combining the ideas of unfitted FEM with Nitsche's method for interface problems led to a consistent unfitted finite element method for interface problems in \cite{Han02}. These ideas were then generalized to include more complex coupling problems including PDEs on surfaces and multiphysics coupling, leading to the cutFEM framework \cite{BCHLM15}. Unfitted methods have also been considered for dG \cite{BE09,JL13,Mass12}, HDG \cite{CocQS14,QiuSV16}, and HHO \cite{BE18} methods. A crucial problem for unfitted methods is to handle the instabilities that can occur for certain intersections of the mesh and the interface. In the cutFEM framework using a conforming approximation in each subdomain, this is solved by the addition of certain stabilized terms \cite{Bu10}, whereas in dG methods, which are more flexible with respect to cell geometry, this can be solved by cell agglomeration \cite{SBvdV11,JL13}. This technique can also be used in the context of conforming finite elements \cite{BVM18} (therein called aggregated FEM).

In the present work, we continue the development of unfitted HHO methods. The combination of cell agglomeration and a local weak coupling derived from the HHO framework has been successfully applied to elliptic interface problems \cite{BCDE21}, incompressible Stokes flows \cite{BDE21}, and wave propagation in heterogeneous media \cite{BDE22}. Nevertheless, if these methods have to be integrated in an existing HHO code, the cell-agglomeration procedure becomes a disruptive element since it requires modifying part of the mesh in the interface zone. It is often desirable to keep the underlying mesh fixed and modify instead the algebraic structure of the bulk unknowns. One technique that fits this purpose is the idea of polynomial extension from interior elements to stabilize the method. The idea was first introduced in the context of Lagrange multipliers \cite{HR09} and then using isogeometric approximation and Nitsche's method in \cite{BPV20}.
The purpose of this work is to develop and analyze this technique for hybrid nonconforming methods, focusing on HHO methods. Owing to the close links between HDG, WG, ncVEM and HHO methods, the present results are relevant to the development of these other methods as well. Moreover, we focus on elliptic interface problems, but observe that elliptic problems posed on curved domains can be handled by the same techniques.

Let us briefly outline the main idea. Recall that a central ingredient in HHO methods is a local gradient reconstruction from the local cell and face unknowns. The stability and consistency properties of this reconstructed gradient crucially hinge on a discrete trace inequality on the normal derivatives at the cell faces. In the case of ill-cut cells, the constant in this inequality degenerates. One remedy, explored in \cite{BE18,BCDE21}, is to use cell agglomeration around ill-cut cells. This procedure produces an aggregated mesh solely composed of well-cut cells, whose size is close to that of the original mesh. The remedy explored herein is to use the cell and face polynomials of ill-cut cells in the gradient reconstruction of a neighboring well-cut cell. With this approach, the mesh cells remain unmodified, but the stencil associated with the gradient reconstruction operator is slightly extended. \rev{Moreover, the cell polynomials of the ill-cut cells are stabilized by using the cell polynomials of the associated well-cut cells, in the same spirit as the so-called direct ghost penalty method devised in \cite{Preuss:18,LehOl:19}.} The main question is then whether the resulting method maintains the above stability and optimal approximation properties. The proof of these properties is by no means straightforward and is the main contribution of this work. Interestingly, the main analysis tools (discrete trace inequality and polynomial approximation) are of independent interest. 

This work is organized as follows. In Section~\ref{sec:model}, we introduce the model problem.
In Section~\ref{sec:unfitted_HHO}, we present the unfitted HHO method stabilized
using polynomial extensions. 
The stability and error analysis of the method is covered in Section~\ref{sec:analysis}.
Numerical results are presented in Section~\ref{sec:res}. Finally, the proofs of the 
analysis tools (discrete trace inequality and polynomial approximation)
on unfitted meshes is outlined in Section~\ref{sec:proofs}.

\section{Model problem}
\label{sec:model}

Let $\Omega$ be a polyhedral domain in $\R^d$, $d \in\{2,3\}$ (open, bounded, connected, Lipschitz subset of
$\R^d$), and consider a partition of $\Omega$ into two disjoint subdomains $\overline{\Omega} = \overline
{\Omega_1} \cup \overline{\Omega_2}$ with interface $\Gamma := \partial \Omega_1 \cap \partial
\Omega_2$, as illustrated in Figure~\ref{fig:model}. 
For all $i\in\{1,2\}$, we set $\ibar := 3-i$, so that $\Omega\setminus \Omega^i = \Omega^{\ibar} \cup \Gamma$.
For simplicity, we assume that the interface $\Gamma$ is of class $C^2$ and that it does not touch the boundary of $\Omega$. The unit normal vector $\nG$ to $\Gamma$ conventionally points from $\Omega_1$ to $\Omega_2$.
For a smooth enough function $v$ defined on $\Omega_1 \cup \Omega_2$, setting $v_i:=v|_{\Omega_i}$ for all $i\in\{1,2\}$,
we denote its jump across $\Gamma$ as
$\llbracket v \rrbracket_{\Gamma} := v_{1}|_\Gamma-v_{2}|_\Gamma$.

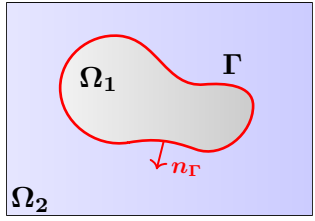
\begin{figure}[htb]
\centering
\def\L{1.5}    
\def\A{1.8}    
\def\d{2.2}    
\def\ang{-20}  
\def\anga{72}  
\def\bodyshape{
(-100:1.0*\L) to[out=190,in=-90]
(180:1.4*\L) to[out=90,in=175]
(110:1.0*\L) to[out=-5,in=185]
($(R)+(110:0.6*\L)$) to[out=5,in=90]
($(R)+(15:0.7*\L)$) to[out=-90,in=-20]
($(R)+(-120:0.7*\L)$) to[out=160,in=10] cycle
}
\def\body{
\draw[line width=1, draw=red] \bodyshape;
}
\begin{tikzpicture}[scale=0.4]
\draw[line width=1, draw=black] (0, 0) rectangle (10, 7);
\shade[left color=blue!10, right color=blue!20] (10,0) rectangle (0,7);
\node at (0.75, 0.5) {$\boldsymbol{\Omega_2}$};
\begin{scope}[xshift=40]
\node at (6, 5) {$\boldsymbol{\Gamma}$};
\draw [->, line width=0.75, color=red] (3.75, 2.5) -- (3.5, 1.5);
\node at (4.5, 1.5) {\color{red} \footnotesize $\boldsymbol{n_\Gamma}$};
\end{scope}
\begin{scope}[scale=1.225, xshift=100, yshift=97.5]
\coordinate (R) at (\ang:\d); 
\begin{scope}
\clip \bodyshape;
\shade[shading=axis,shading angle=90,left color=gray!10, right color=gray!40] \bodyshape;
\end{scope}
\body
\end{scope}
\node at (3, 4.5) {$\boldsymbol{\Omega_1}$};
\end{tikzpicture}
\caption{Model problem}
\label{fig:model}
\end{figure}

Our goal is to approximate the \rev{exact} solution $\rev{u\upex} \in H^1(\Omega_1 \cup \Omega_2)$ of
the following elliptic interface problem:
\begin{subequations} \begin{alignat}{2}
-\nabla \SCAL(\kappa \nabla \rev{u\upex}) & = f &\quad &\text{ in } \Omega_1 \cup \Omega_2,\label{eq:strong.PDE}\\
\llbracket \rev{u\upex} \rrbracket_{\Gamma} & = g_D &\quad &\text{ on } \Gamma,\label{eq:strong.D}\\
\llbracket \kappa \nabla \rev{u\upex} \rrbracket_{\Gamma} \cdot \nG & = g_N &\quad &\text{ on } \Gamma, \label{eq:strong.N}\\
\rev{u\upex} & = 0 &\quad &\text{ on } \partial \Omega,\label{eq:strong.BC}
\end{alignat}
\end{subequations}
with $ f \in L^2(\Omega)$, $g_D \in H^{\frac{1}{2}}(\Gamma)$ and $g_N \in L^2(\Gamma)$. For simplicity, we
consider a homogeneous Dirichlet boundary condition on $\partial \Omega$ (see~\eqref{eq:strong.BC}). We assume that
the diffusion coefficient $\kappa$ is scalar-valued and that $\kappa_i := \kappa|_{\Omega_i}$ is constant for each
$i \in\{1,2\}$. To fix the ideas, we assume that $\kappa_1 \leq \kappa_2$. Our analysis covers the strongly
contrasted case where $\kappa_1 \ll \kappa_2$. We emphasize that it is the property 
$\kappa_1 \leq \kappa_2$ that fixes the numbering of the subdomains, and not the fact that 
one of them touches the boundary. Thus, the numbering in Figure~\ref{fig:model} is only illustrative.
We also notice that the above assumptions on $\Gamma$ and $\kappa$ 
can be lifted with additional technicalities.

The weak formulation can be written as
\begin{equation} \label{eq:weak}
\text{Find } \rev{u\upex} \in V_{g_D}, \qquad a(\rev{u\upex},w) = \ell(w) \qquad \forall w \in V_0,
\end{equation}
with $V_{g_D} := \{ v\in H^1(\Omega_1 \cup \Omega_2) \mid \llbracket v \rrbracket_{\Gamma} = g_D \: \text{on $\Gamma$}, \: v=0 \: \text{on $\partial\Omega$}\}$.
Notice that $V_0=H^1_0(\Omega)$.
The symmetric bilinear form $a$ and the linear form $\ell$ are defined as
\begin{subequations}
\begin{empheq}[left=\empheqlbrace]{align}
a(v,w) & := \sum_{i\in\{1,2\}} \kappa_i (\nabla v_i, \nabla w_i)_{\Omega_i}\\
\ell(w) & := (f,w)_{\Omega} + (g_N, w)_{\Gamma}.
\end{empheq}
\end{subequations}
Here and in what follows, for a measurable subset $S\subset \Omega$, we denote 
by $(\cdot,\cdot)_S$ the $L^2(S)$-inner product with appropriate Lebesgue measure
and by $\|\SCAL\|_S$ the corresponding norm.
We notice that, in the weak formulation~\eqref{eq:weak}, the jump condition~\eqref{eq:strong.D} and the boundary condition~\eqref{eq:strong.BC} are enforced explicitly in the definition of the functional space $V_{g_D}$, whereas the jump condition~\eqref{eq:strong.N} is enforced weakly (i.e., results from the weak formulation). 

\section{Unfitted HHO methods}
\label{sec:unfitted_HHO}

In this section, we present the unfitted HHO method with stabilization of ill-cut cells 
using polynomial extensions. 

\subsection{Unfitted meshes}
\label{subsec:unfitted_meshes}

Let $\mesh$ be a mesh that covers $\Omega$ exactly. A generic mesh cell is denoted 
$T \in \mesh$, it is considered to be an open set,
$h_T$ denotes its diameter, and $\nT$ its unit outward normal. 
The mesh size is defined as $h := \max_{T \in \mesh} h_T$. 
For all $T \in \mesh$, its neighboring layers $\Delta_j(T) \subset \mesh$ are 
defined by induction as $\Delta_0
(T):=T$ and $\Delta_{j}(T):=\left\{T^{\prime} \in \mesh \mid \overline{T^{\prime}} \cap \overline{\Delta_{j-1}(T)} \neq \emptyset\right\}$ for all \rev{$j \in \rev{\mathbb{N}}$}.
In what follows, we use $j\in\{1,2\}$.
To alleviate the notation, we set $\Delta(T):=\Delta_1(T)$. We make the mild 
assumption that, for every mesh cell $T\in\mesh$, its convex hull, $\conv(T)$, satisfies
\begin{equation} \label{eq:conv_delta}
\conv(T) \subset \Delta(T). 
\end{equation}
For instance, this assumption trivially holds true if all the
mesh cells are convex sets, and it remains a reasonable assumption if the mesh cells 
are nonconvex. One can also assume more generally 
that $\conv(T)\subset \Delta_{n_0}(T)$ for some fixed integer $n_0$.

The mesh $\mesh$ is typically composed of cells having a simple shape (although this is 
not a necessary assumption); in
our illustrations, we consider a structured mesh composed of squares.
Quite importantly, the mesh $\mesh$ is not fitted to the interface $\Gamma$, so that
$\Gamma$ can cut arbitrarily across any mesh cell. 
For all $T\in\mesh$, we set $T^{\Gamma} := T \cap \Gamma$ and
$T^i:= T\cap \Omega^i$ for all $i\in\{1,2\}$ (some of these sets may be empty).
\rev{The current implementation assumes that both sub-cells $T^i$ are connected,
although this assumption is not needed from a theoretical viewpoint.}

The first obvious partition among the mesh cells is between uncut and cut cells.
Moreover, among cut cells, we distinguish between well-cut and ill-cut cells. Specifically,
we fix a parameter $\vartheta\in(0,1)$ and say that a cut cell $T$ is a well-cut cell
if, for all $i\in\{1,2\}$, $T^i$ contains a ball of radius $\vartheta h_T$,
whereas $T$ is an ill-cut cell if this condition fails. 
It is shown in \cite[Lemma 6.2]{BE18} that, it the mesh size $h$ is small enough 
(with respect to the curvature of the interface) and the parameter $\vartheta$ is small enough
(with respect to the shape-regularity parameter of the mesh), then, for every ill-cut cell,
the above condition can fail on one and only one side of $T$. In what follows, we assume 
that this is always the case. The above considerations lead to 
the following partitions:
\begin{equation} \label{eq:partition3}
\mesh := \uncutmesh \cup \cutmesh, \qquad \cutmesh :=\TOKmesh \cup \TKOmesh,
\end{equation}
where $\uncutmesh$ collects the uncut cells, $\cutmesh$ the cut cells,
$\TOKmesh$ the well-cut cells, and $\TKOmesh$ the ill-cut cells.
Setting $\mesh^i := \{T \in \mesh \mid T \subset \Omega_i \}$ for all 
$i \in \{1,2\}$, we have $\uncutmesh = \mesh^1 \cup \mesh^2$. 
For every $T\in\uncutmesh$, we define 
$\iota(T) \in \{1,2\}$ as the index such that $T\in \mesh^{\iota(T)}$. 
Moreover, for all $T\in \TKOmesh$, we define $\iota(T)\in\{1,2\}$ as the
index for which the above ball condition fails. We then consider the partition
$\TKOmesh= \TKOonemesh \cup \TKOtwomesh$, where 
$T\in \TKOimesh$ if $\iota(T)=i$. 
In conclusion, a mesh partition that is finer than~\eqref{eq:partition3} is 
\begin{equation} \label{eq:partition5}
\mesh := \underbrace{\mesh^1 \cup \mesh^2}_{\uncutmesh} \cup 
\underbrace{\TOKmesh \cup \TKOonemesh \cup \TKOtwomesh}_{=\cutmesh}.
\end{equation}

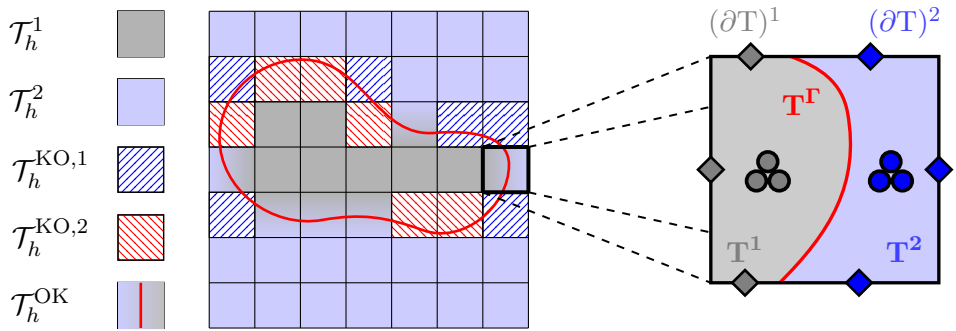
\begin{figure}[htb]
\centering
\def\L{1.5}    
\def\A{1.8}    
\def\d{2.2}    
\def\ang{-20}  
\def\anga{72}  
\def\bodyshape{
(-100:1.0*\L) to[out=190,in=-90]
(180:1.4*\L) to[out=90,in=175]
(110:1.0*\L) to[out=-5,in=185]
($(R)+(110:0.6*\L)$) to[out=5,in=90]
($(R)+(15:0.7*\L)$) to[out=-90,in=-20]
($(R)+(-120:0.7*\L)$) to[out=160,in=10] cycle
}
\def\body{
\draw[line width=1, draw=red] \bodyshape;
}
\newdimen\LineSpace 
\LineSpace=8cm 
\definecolor{mynwlinescolor}{RGB}{0,0,255} 
\pgfdeclarepatternformonly[\LineSpace]{hachure_bleues}
{\pgfqpoint{-1pt}{-1pt}}
{\pgfqpoint{8pt}{8pt}}
{\pgfqpoint{\LineSpace}{\LineSpace}}
{
\pgfsetlinewidth{0.25pt}
\pgfsetstrokecolor{mynwlinescolor} 
\pgfpathmoveto{\pgfqpoint{0pt}{0pt}}
\pgfpathlineto{\pgfqpoint{\LineSpace + 0.1pt}{\LineSpace + 0.1pt}}
\pgfusepath{stroke}
}
\begin{tikzpicture}[scale=0.6]
\def\n{7} 
\def\dx{1} 
\def\dy{1} 
\fill[color=blue!20] (0, 0) rectangle (7, 2);
\fill[color=blue!20] (0, 6) rectangle (7, 7);
\fill[color=blue!20] (4, 5) rectangle (7, 6);
\fill[color=gray!60]  (1, 4) rectangle (3, 5);
\fill[color=gray!60]  (1, 3) rectangle (6, 4);
\shade[bottom color=blue!20, top color=gray!60] (1, 2) rectangle (4, 3);
\shade[top color=blue!20, bottom color=gray!60] (4, 4) rectangle (5, 5);
\shade[left color=blue!20, right color=gray!60] (0, 3) rectangle (1, 4);
\shade[left color=gray!60, right color=blue!20] (6, 3) rectangle (7, 4);
\pgfmathsetmacro{\LineSpace}{10} 
\draw[pattern=north east lines, pattern color=blue] (0, 2) rectangle (1, 3);
\draw[pattern=north east lines, pattern color=blue] (6, 2) rectangle (7, 3);
\draw[pattern=north east lines, pattern color=blue] (5, 4) rectangle (7, 5);
\draw[pattern=north east lines, pattern color=blue] (0, 5) rectangle (1, 6);
\draw[pattern=north east lines, pattern color=blue] (3, 5) rectangle (4, 6);
\draw[pattern=north west lines, pattern color=red] (0, 4) rectangle (1, 5);
\draw[pattern=north west lines, pattern color=red] (3, 4) rectangle (4, 5);
\draw[pattern=north west lines, pattern color=red] (1, 5) rectangle (3, 6);
\draw[pattern=north west lines, pattern color=red] (4, 2) rectangle (6, 3);

\begin{scope}[xshift= -12cm]
\draw[line width = 0.75] -- (10, 0) rectangle (11, 1);
\draw[line width = 0.5] -- (10, 1.5) rectangle (11, 2.5);
\draw[line width = 0.5] -- (10, 3) rectangle (11, 4);
\draw[line width = 0.75] -- (10, 4.5) rectangle (11, 5.5);
\draw[line width = 0.75] -- (10, 6) rectangle (11, 7);
\shade[left color=blue!20, right color=gray!50] (10, 0) rectangle (11, 1);
\draw[line width = 1, color=red] -- (10.5, 0) rectangle (10.5, 1);
\draw[pattern=north west lines, pattern color=red] (10, 1.5) rectangle (11, 2.5);
\draw[pattern=north east lines, pattern color=blue] (10, 3) rectangle (11, 4);
\fill[color=blue!20] (10, 4.5) rectangle (11, 5.5);
\fill[color=gray!60] (10, 6) rectangle (11, 7);
\node at (8, 6.5) {$\mesh^1$};
\node at (8, 5) {$\mesh^2$};
\node at (8.525, 3.5) {${\TKOonemesh}$};
\node at (8.525, 2) {${\TKOtwomesh}$};
\node at (8.3, 0.5) {${\TOKmesh}$};
\end{scope}
\foreach \i in {0,...,\n}
{
\draw (\i*\dx, 0) -- (\i*\dx, \n*\dy);
}
\foreach \j in {0,...,\n}
{
\draw (0, \j*\dy) -- (\n*\dx, \j*\dy);
}
\begin{scope}[scale=1.225, xshift=65, yshift=97.5]
\coordinate (R) at (\ang:\d); 
\body
\end{scope}
\draw[line width = 0.75, dashed] (6, 3) -- (11, 1);
\draw[line width = 0.75, dashed] (6, 4) -- (11, 6);
\draw[line width = 0.75, dashed] (7, 3) -- (16, 1);
\draw[line width = 0.75, dashed] (7, 4) -- (16, 6);
\fill[color=blue!20]  (11, 1) rectangle (16, 6);
\fill[gray!40] (12.75, 6) to[out=-20, in=100] (14, 4.75) to[out=-80, in=45] (12.5, 1) -- (11,1) -- (11,6) -- cycle;
\draw[red, line width=1.25pt, smooth] (12.75, 6) to[out=-20, in=100] (14, 4.75) to[out=-80, in=45] (12.5, 0.99);
\draw[line width = 1.25] (11, 1) rectangle (16, 6);
\node at (11.75, 1.75) {\color{gray!100} $\boldsymbol{\textbf{T}^1}$};
\node at (15.25, 1.75) {\color{blue!75} $\boldsymbol{\textbf{T}^2}$};
\node at (11.75, 6.75) {\color{gray!100} ${(\partial \text{T})^1}$};
\node at (15.25, 6.75) {\color{blue!75} ${(\partial \text{T})^2}$};
\node at (13, 5) {\color{red} $\boldsymbol{\textbf{T}^\Gamma}$};
\draw[black, line width = 2.75] (12.25, 3.675) circle (5pt);
\fill[gray] (12.25, 3.675) circle (5pt);
\draw[black, line width = 2.75] (12, 3.25) circle (5pt);
\fill[gray] (12, 3.25)  circle (5pt);
\draw[black, line width = 2.75] (12.5, 3.25) circle (5pt);
\fill[gray] (12.5, 3.25)  circle (5pt);
\begin{scope}[xshift=2.7cm]
\draw[black, line width = 2.75] (12.25, 3.675) circle (5pt);
\fill[blue] (12.25, 3.675) circle (5pt);
\draw[black, line width = 2.75] (12, 3.25) circle (5pt);
\fill[blue] (12, 3.25)  circle (5pt);
\draw[black, line width = 2.75] (12.5, 3.25) circle (5pt);
\fill[blue] (12.5, 3.25)  circle (5pt);
\end{scope}
\filldraw[black, line width=1pt] (11.75, 1) node[shape=diamond, draw, fill=gray, inner sep=0pt, minimum size=10pt] {};
\filldraw[black, line width=1pt] (11.875, 6) node[shape=diamond, draw, fill=gray, inner sep=0pt, minimum size=10pt] {};
\filldraw[black, line width=1pt] (11, 3.5) node[shape=diamond, draw, fill=gray, inner sep=0pt, minimum size=10pt] {};
\filldraw[black, line width=1pt] (14.25, 1) node[shape=diamond, draw, fill=blue, inner sep=0pt, minimum size=10pt] {};
\filldraw[black, line width=1pt] (14.5, 6) node[shape=diamond, draw, fill=blue, inner sep=0pt, minimum size=10pt] {};
\filldraw[black, line width=1pt] (16, 3.5) node[shape=diamond, draw, fill=blue, inner sep=0pt, minimum size=10pt] {};
\draw[line width=1.5] (6*\dx, 3*\dy) -- (6*\dx, 4*\dy);
\draw[line width=1.5] (6*\dx, 3*\dy) -- (7*\dx, 3*\dy);
\draw[line width=1.5] (6*\dx, 4*\dy) -- (7*\dx, 4*\dy);
\draw[line width=1.5] (7*\dx, 3*\dy) -- (7*\dx, 4*\dy);
\end{tikzpicture}
\caption{Left: Illustration of the different types of cell in the unfitted mesh.
Right: Zoom on the local degrees of freedom in 
a cut cell $T \in \TOKmesh$; here, the approximation is affine in the sub-cells and
constant on the sub-faces.}
\label{cut_configuration}
\end{figure}

The polynomial extension technique we propose relies on a pairing operator, which maps every 
ill-cut cell $S\in\TKOmesh$ to an uncut or a well-cut cell that belongs to the neighborhood $\Delta(S)$. 
We denote the pairing operator as $\N : \TKOmesh \longrightarrow \mesh$, so that we have
\begin{equation}
\N : \TKOmesh \ni S \longmapsto T \in (\mesh^i \cup \TOKmesh \cup \TKOibarmesh) \cap \Delta(S), \quad i:=\iota(T).
\end{equation}
Letting $\N_i := {\N}|_{\TKOimesh}$ for all $i\in\{1,2\}$, we have more specifically
\begin{equation}
\N_i : \TKOimesh \ni S \longmapsto T \in (\mesh^i \cup \TOKmesh \cup \TKOibarmesh) \cap \Delta(S).
\end{equation}
We notice that the set $\TKOimesh$ can be partitioned as follows:
\begin{equation}
\TKOimesh =  \N_i^{-1}(\mesh^i) \cup \N_i^{-1}(\TOKmesh) \cup \N_i^{-1}(\TKOibarmesh),
\qquad \forall i \in \{1,2\}.
\label{decomposition}
\end{equation}
The pairing operator is essentially the one constructed in \cite[Algorithm~1]{BCDE21}. One difference, however, is that after Step~3 of this algorithm, we need to add an additional Step~4 performing the same operations as in Step~3, but for any mesh cell in $\TKOtwomesh$. One way of proceeding is to set $\N_2(T):=S$, whenever $T:=\N_1(S)\in \TKOtwomesh$ for some cell $S\in \TKOonemesh$. Thus, the well-cut part of $S$ in $\Omega_2$ will be used for $T$, and the well-cut part of $T$ in $\Omega_1$ will be used for $S$. The reason why Step~4 is not needed in the context of cell-agglomeration is that by agglomerating $T$ and $S$ in Step 3, $T$ does not require any further stabilization. 
An illustration of the pairing operators $\N_1$ and $\N_2$ is shown in Figure~\ref{figure_decomposition}, and two concrete examples are displayed in the central column of Figure~\ref{fig:meshes} below. Notice that it may happen that some cells in $\TOKmesh$ are in the image of both $\N_1$ and $\N_2$. \rev{The above choice in Step 4 aims at reducing the possible impact of the 
pairing operator on the resulting stiffness matrix. There may be room for some further improvement, but we do not delve on this aspect here since the impact of the pairing operator on the stencil is already quite modest, as highlighted in Figure~\ref{fig:sparsity} below.}

\begin{figure}[htb]
\centering
\begin{tikzpicture}[scale=1]
\draw [->, color=red] (1.55, 1.25) to [out=0, in=180] (3.95, 1/6);
\draw [->, color=red] (1.55, 1.5)  to [out=0, in=180] (3.95, 2+1/6);
\draw [->, color=red] (1.55, 1.75) to [out=0, in=180] (3.95, 3+1/6);
\draw [->, color=blue] (1.55, 2.25) to [out=0, in=180] (3.95, 1/3);
\draw [->, color=blue] (1.55, 2.5)  to [out=0, in=180] (3.95, 1+1/6);
\draw [->, color=blue] (1.55, 2.75) to [out=0, in=180] (3.95, 4+1/6);
\node at (2, 3.5) {\color{blue} $\N_1$};
\node at (2, 0.5) {\color{red} $\N_2$};
\pgfmathsetmacro{\LineSpace}{8} 
\draw[pattern=north east lines, pattern color=blue] (0, 2) rectangle (1.5, 3);
\draw[pattern=north west lines, pattern color=red] (0, 1) rectangle (1.5, 2);
\draw[line width=0.1pt] (4, 1/2) -- (5.5, 1/2);
\draw[line width=0.1pt] (4, 1/3) -- (5.5, 1/3);
\draw[line width=0.1pt] (4, 0.25-1/12) -- (5.5, 0.25-1/12);
\fill[pattern=north east lines, pattern color=blue] (4, 0.25-1/12) rectangle (5.5, 0.5);
\fill[pattern=north west lines, pattern color=red] (4, 0) rectangle (5.5, 0.25+1/12);
\draw[line width=0.1pt] (4, 1+1/3) -- (5.5, 1+1/3);
\fill[pattern=north east lines, pattern color=blue] (4, 1) rectangle (5.5, 1+1/3);
\draw[line width=0.1pt] (4, 2+1/3) -- (5.5, 2+1/3);
\fill[pattern=north west lines, pattern color=red] (4, 2) rectangle (5.5, 2+1/3);
\draw[line width=0.1pt] (4, 3+1/3) -- (5.5, 3+1/3);
\fill[pattern=north west lines, pattern color=red] (4, 3) rectangle (5.5, 3+1/3);
\draw[line width=0.1pt] (4, 4+1/3) -- (5.5, 4+1/3);
\fill[pattern=north east lines, pattern color=blue] (4, 4) rectangle (5.5, 4+1/3);
\node at (4.75, 0.5) {$\TOKmesh$};
\node at (4.75, 1.5) {$\TKOtwomesh$};
\node at (4.75, 2.5) {$\TKOonemesh$};
\node at (4.75, 3.5) {$\mesh^2$};
\node at (4.75, 4.5) {$\mesh^1$};
\node at (0.75, 0.5) {$\TOKmesh$};
\node at (0.75, 1.5) {$\TKOtwomesh$};
\node at (0.75, 2.5) {$\TKOonemesh$};
\node at (0.75, 3.5) {$\mesh^2$};
\node at (0.75, 4.5) {$\mesh^1$};
\draw[line width=0.5pt] (0, 0) rectangle (1.5, 5);
\draw[line width=1.5pt] (0, 1) -- (1.5, 1);
\draw[line width=1.5pt] (0, 2) -- (1.5, 2);
\draw[line width=1.5pt] (0, 3) -- (1.5, 3);
\draw[line width=1.5pt] (0, 4) -- (1.5, 4);
\draw[line width=0.5pt] (0, 5) -- (1.5, 5);
\draw[line width=0.5pt] (4, 0) rectangle (5.5, 5);
\draw[line width=1.5pt] (4, 1) -- (5.5, 1);
\draw[line width=1.5pt] (4, 2) -- (5.5, 2);
\draw[line width=1.5pt] (4, 3) -- (5.5, 3);
\draw[line width=1.5pt] (4, 4) -- (5.5, 4);
\draw[line width=0.5pt] (4, 5) -- (5.5, 5);
\end{tikzpicture}
\caption{Pairing operators $\N_1$ and $\N_2$ acting on the ill-cut cells.}
\label{figure_decomposition}
\end{figure}
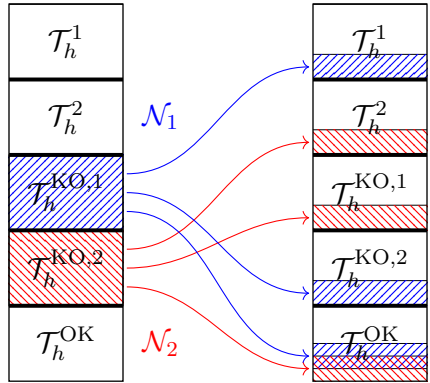

The mesh faces are collected in the set $\meshF$, and we set
$\meshF^i := \{F \in \meshF \mid F \subset \Omega_i \}$ for all $i\in\{1,2\}$.
For every mesh face $F \in \meshF^\Gamma := \meshF \setminus (\meshF^1\cup \meshF^2)$ 
cut by the interface $\Gamma$, we define its two sub-faces as $F^i := F \cap \Omega_i$ 
for all $i\in\{1,2\}$.
For all $T\in\mesh$, $\localfaces$ is the collection of mesh faces composing
the boundary $\partial T$.  
Moreover, for every cut cell $T\in \TOKmesh \cup \TKOmesh$, 
recalling that $T^{\Gamma} = T \cap \Gamma$,
the boundary $\partial(T^i)$ of the sub-cell $T^i$ is
decomposed as
\begin{equation}
\partial(T^i) := \dTi \cup \TG, \quad \dTi := \partial T \cap (\overline{\Omega_i} \backslash \Gamma), \quad \forall i\in\{1,2\}.
\end{equation}
To unify the notation, for every uncut cell $T \in \mesh^i$ with $i:=\iota(T) 
\in\{1,2\}$, we set (recall that $\ibar=3-i$)
\begin{equation}
T^i := T, \quad T^{\ibar}:=\emptyset, \quad\dTi:=\partial T, \quad \dTibar:=\emptyset, \quad \TG
:=\emptyset.
\end{equation}
Finally, for all $T\in\mesh$ and all $i\in\{1,2\}$, the (possibly empty) set
$\localfacesi := \left\{F^i = F \cap \Omega_i \mid F \in \localfaces \right\}$ is 
the collection of the (sub-)faces composing $\dTi$.

\subsection{Unfitted HHO spaces}

HHO methods utilize polynomial unknowns attached to the mesh cells and to the mesh faces. 
We consider here the mixed-order HHO method, where the degree of the face
unknowns is $k \geq 0$ and that of the cell unknowns is $(k+1)$. For the motivation of using a mixed-order setting, we refer the reader to \cite[Remark~4.1]{BE18}.

For every uncut cell $T \in \mesh^i$, $i \in\{1,2\}$, the local discrete HHO unknowns are a pair composed of one cell polynomial and a collection of face polynomials (one for each face of $T$), so that 
\begin{equation}
\uThat := (\uT, \ufT) \in \UThat := \P^{k+1}(T;\R) \times \P^k(\localfaces;\R),
\quad \P^k(\localfaces;\R) := 
\bigtimes_{F \in \localfaces} \P^k(F;\R). 
\end{equation}
Here, $\P^{k+1}(T;\R)$ (resp.,
$\P^k(F;\R)$) consists of the restriction to $T$ (resp., $F$) of 
scalar-valued $d$ variate polynomials
of degree at most $(k+1)$ (resp., $(d-1)$-variate polynomials of degree at most $k$ 
composed with any
affine geometric mapping from the hyperplane supporting $F$ to $\R^{d-1}$).
For every cut cell $T \in \TOKmesh \cup \TKOmesh$, following \cite{BE18,BCDE21},
we double the HHO unknowns so as to have
the usual unknowns available in each sub-cell, 
except on $\TG$ where there are no unknowns.
Thus, the local HHO unknowns in every cut cell are
\begin{equation}
\uThat := (\uThatone, \uThattwo) := (\uTone, \uFone, \uTtwo, \uFtwo) \in \UThat := \UThatone \times
\UThattwo,
\label{unknowns_cut}
\end{equation}
with $\UTihat:=\P^{k+1}(T^i)\times \P^k(\localfacesi)$ and
$\P^k(\localfacesi):= \bigtimes_{F^i \in \localfacesi} \P^k(F^i)$, for all
$i \in\{1,2\}$. To unify the notation between cut and uncut cells, we write $\uThat$ in the same format as in
(\ref{unknowns_cut}) for every uncut cell as well. Thus, we write
$\uThat := (\uT, \ufT, 0,0)$ for all $T \in \mesh^1$ and 
$\uThat := (0,0, \uT, \ufT)$ for all $T \in
\mesh^2$.

The global HHO space on the unfitted mesh is
\begin{equation}
\Uhat := \UTHone \times \UFhone \times \UTHtwo \times \UFhtwo,
\label{HHO_spaces}
\end{equation}
where
\begin{equation}
\UTHi := \underset{T \in \mesh}{\bigtimes} \P^{k+1}(T^i
;\R), \quad
\UFHi := \underset{F \in \meshF}{\bigtimes} \P^k(F^i;\R),
\quad \forall i \in \{1,2\}.
\end{equation} 
To enforce the homogeneous Dirichlet boundary condition on $\partial\Omega$, 
we consider the subspace $\Uhatz := \UTHone \times \UFhonez \times \UTHtwo \times \UFhtwoz$,
where $\UFhonez$ and $\UFhtwoz$ are the subspaces of $\UFhone$ and $\UFhtwo$, respectively, 
where all the unknowns attached to faces located on $\partial\Omega$ are set to zero. 
Recall that only one of the two subdomains $\Omega_i$ touches the boundary, so that,
in practice, only one of the two face spaces is modified to account for Dirichlet conditions.

\subsection{Local reconstruction operator}

The local reconstruction operator is a central tool in the devising of HHO methods.
Recall that, in the fitted case, one possibility (see \cite{AbErPi:18,DiPDr:17}) 
is to consider, for all $T\in\mesh$, 
the operator $\GT: \UThat \rightarrow \P^k(T;\R^d)$ such that, for all 
$\uThat=(\uT, \ufT) \in \UThat$ 
and all $\q \in \P^k(T;\R^d)$,
\begin{equation}
(\GT(\uThat),\q)_{T} := (\nabla \uT, \q)_{T} + (\uF - \uT, \q \SCAL \nT)_{\dT}.
\end{equation}
In the unfitted case, the idea proposed in \cite{BCDE21} is to reconstruct a gradient
in each sub-cell $T^i$. Thus, one defines 
a pair of local operators $\GTi: \UThat \rightarrow \P^k(T^i;\R^d)$, $i\in\{1,2\}$, 
such that, for all 
$\uThat=(\uTone, \uFone, \uTtwo, \uFtwo)\in \UThat$ and all $\q \in \P^k(T^i;\R^d)$,
\begin{equation} \label{eq:old_def_GTi}
(\GTi(\uThat),\q)_{T^i} := (\nabla \uTi, \q)_{T^i} + (\udTi - \uTi, \q \SCAL \nT)_{\dTi}
- \delta_{i1}(\llbracket \uT \rrbracket_{\Gamma},\q \SCAL \nG)_{\TG},
\end{equation}
with $\llbracket \uT \rrbracket_{\Gamma} := u_{T^1}|_{\TG}-u_{T^2}|_{\TG}$
and $\delta_{i1} $ denotes the Kronecker delta.
Notice that the two gradients are reconstructed independently, but $\GTo$ 
depends on the cell components on both sides of the interface owing to the last term
in~\eqref{eq:old_def_GTi}. This term is only present when $i=1$ because we assumed 
$\kappa_1\le \kappa_2$ (to fix the ideas, otherwise the term is present only when $i=2$).
As discussed in \cite[Section~2.5]{BCDE21}, this is instrumental 
to achieve robustness of the
error estimate in the highly contrasted case $\kappa_1 \ll \kappa_2$. 

Owing to the use of polynomial extensions in the present work, 
the stencil of the local gradient reconstruction operator needs to be extended. 
Indeed, on a given mesh cell $T\in\mesh$, this operator now involves the local cell and 
face unknowns in $T$ (as before), but also the cell and face unknowns of the ill-cut cells 
$S\in \N^{-1}(T)$ (those are the cells $S\in\mesh$ such that there exists 
$i\in\{1,2\}$ so that $\N_i(S)=T$). 
Therefore, for all $T\in\mesh$, we introduce the notation
\begin{equation}
\uThatplus := (\uThat,(\uShat)_{S\in\N^{-1}(T)}) \in \UThatEX := \UThat \times \bigtimes_{S\in\N^{-1}(T)}\UShat.
\end{equation}

We can now define, for all $T \in \mesh$, the local gradient
reconstruction operators $\GTi: \UThatEX \rightarrow \P^k(T^i;\R^d)$, for all $i \in \{1,2\}$.
For all $(T,i)\in\pmesh$ and every polynomial $\q \in \P^k(T^i;\R^d)$,  
$\q^+$ denotes its extension to \rev{$\Delta(T)$ (observe that 
$T^i \cup \bigcup_{S\in\N_i^{-1}(T)} S^i\subset \Delta(T)$)}.
Then, for all $\uThatplus \in \UThatEX$,
\begin{enumerate}[label=\roman*)]
\item If $T \in \uncutmesh$, we set, for all $\q \in \P^k(T^i;\R^d)$ with $i:=\iota(T)$,
\begin{subequations}
\begin{align}
(\GTi(\uThatplus),\q)_{T^i} := {}& (\nabla \uTi, \q)_{T^i} + (\udTi - \uTi, \q \SCAL \nT)_{\dTi}
\nonumber \\
& + \sumexi \Big\{ (\udSi - \uSi, \q^+ \SCAL \nS)_{\dSi} - \delta_{i1}
(\jumpuS_{\Gamma}, \q^+ \SCAL \nG)_{S^{\Gamma}}\Big\}, \label{grad_thi} \\
\GTibar(\uThatplus):={}& \bzero,
\end{align} \end{subequations}
where the summation over $S\in \N_i^{-1}(T)$ is void if $T$ is not in the image of $\N_i$;
\item If $T \in \TOKmesh$, we set, for all $i \in \{1,2\}$ and all $\q \in \P^k(T^i;\R^d)$,
\begin{align}
(\GTi(\uThatplus),\q)_{T^i} := {}& (\nabla \uTi, \q)_{T^i} + (\udTi-\uTi, \q \SCAL \nT)_
{\dTi} - \delta_{i1}(\llbracket \uT \rrbracket_{\Gamma}, \q \SCAL \nG)_{T^{\Gamma}} \nonumber \\
&  + \sumexi \Big\{ (\udSi - \uSi, \q^+ \SCAL \nS)_{\dSi} - \delta_{i1} 
(\jumpuS_{\Gamma}, \q^+ \SCAL \nG)_{S^{\Gamma}}\Big\}; \label{grad_tok}
\end{align}
\item If $T \in \TKOmesh$, we set, for all $\overline{\q} \in \P^k(T^{\ibar};\R^d)$ with $i:=\iota(T)$,
\begin{subequations}
\begin{align}
\GTi(\uThatplus) :={} & \nabla \uTi, \label{grad_tko_grad}\\
(\GTibar(\uThatplus), \overline{\q})_{T^{\ibar}} := {}& (\nabla \uTibar, \overline{\q}
)_{T^{\ibar}} + (\udTibar - \uTibar, \overline{\q} \SCAL \nT)_{\dTibar} 
- \delta_{{\ibar}1} (\llbracket \uT \rrbracket_{\Gamma}, \overline{\q} \SCAL \nG)_{T^{\Gamma}} \nonumber \\
& + \sumexibar \left\{ (\udSibar - \uSibar, \overline{\q}^+ \SCAL \nS
)_{\dSibar} -\delta_{{\ibar}1}(\jumpuS_{\Gamma}, \overline{\q}^+ \SCAL \nG)_{S^{\Gamma}} \right\}.
\label{grad_tko}
\end{align}
\end{subequations}
\end{enumerate}

\begin{figure}
\centering
\begin{subfigure}{0.4\textwidth}
\begin{tikzpicture}[scale=2.5]
        \begin{scope}
        \clip (0, 0) rectangle (1,1);
        \draw[pattern=north east lines, pattern color=black] (0.7,0) rectangle (1,1);
        \end{scope}
        \fill[blue!20] (1, 0) rectangle (2,1);
        \draw[line width=1.25pt, color=black] (0,0) rectangle (1,1);
        \draw[line width=1.25pt, color=black] (1,0) rectangle (2,1);
        \draw[line width=1.25pt, color=black] (1,0) -- (1,1);
        \draw[line width=1.5pt, color=red] (0.7,-0.1) -- (0.7,1.1);
        \node at (0.575,0.75) {\footnotesize $\bf \color{red} \textbf{S}^\Gamma$};
        \draw[black, line width = 2.75] (1.5, 0.585) circle (1pt);
        \fill[blue] (1.5, 0.585) circle (1pt);
        \draw[black, line width = 2.75] (1.45, 0.5) circle (1pt);
        \fill[blue] (1.45, 0.5)  circle (1pt);
        \draw[black, line width = 2.75] (1.55, 0.5) circle (1pt);
        \fill[blue] (1.55, 0.5)  circle (1pt); 
        \filldraw[black, line width=1pt] (2, 0.5) node[shape=diamond, draw, fill=blue, inner sep=0pt, minimum size=10pt] {};
        \filldraw[black, line width=1pt] (1.5, 1) node[shape=diamond, draw, fill=blue, inner sep=0pt, minimum size=10pt] {};
        \filldraw[black, line width=1pt] (1.5, 0) node[shape=diamond, draw, fill=blue, inner sep=0pt, minimum size=10pt] {};
        \filldraw[black, line width=1pt] (0.9, 1) node[shape=diamond, draw, fill=teal, inner sep=0pt, minimum size=10pt] {};
        \filldraw[black, line width=1pt] (0.9, 0) node[shape=diamond, draw, fill=teal, inner sep=0pt, minimum size=10pt] {};
        \filldraw[black, line width=1pt] (1, 0.5) node[shape=diamond, draw, fill=blue, inner sep=0pt, minimum size=10pt] {};
        \begin{scope}[xshift=-1.1cm]
        \draw[black, line width = 2.75] (1.45, 0.56) circle (1pt);
        \fill[gray] (1.45, 0.56) circle (1pt);
        \draw[black, line width = 2.75] (1.4, 0.475) circle (1pt);
        \fill[gray] (1.4, 0.475)  circle (1pt);
        \draw[black, line width = 2.75] (1.5, 0.475) circle (1pt);
        \fill[gray] (1.5, 0.475)  circle (1pt);  
        \end{scope}
        \begin{scope}[xshift=-0.575cm]
        \draw[black, line width = 2.75] (1.4, 0.6) circle (1pt);
        \fill[teal] (1.4, 0.6) circle (1pt);
        \draw[black, line width = 2.75] (1.4, 0.5) circle (1pt);
        \fill[teal] (1.4, 0.5)  circle (1pt);
        \draw[black, line width = 2.75] (1.4, 0.4) circle (1pt);
        \fill[teal] (1.4, 0.4)  circle (1pt);  
        \end{scope}
        \filldraw[black, line width=1pt] (0, 0.5) node[shape=diamond, draw, fill=gray, inner sep=0pt, minimum size=10pt] {};
        \filldraw[black, line width=1pt] (0.4, 1) node[shape=diamond, draw, fill=gray, inner sep=0pt, minimum size=10pt] {};
        \filldraw[black, line width=1pt] (0.4, 0) node[shape=diamond, draw, fill=gray, inner sep=0pt, minimum size=10pt] {};
        \node at (0.1,0.85) {$\tiny \color{darkgray} \boldsymbol{S^{\ibar}}$};
        \node at (0.8,-0.115) {$\tiny \color{darkgray} \boldsymbol{S^{i}}$};
        \node at (1.85,0.9) {$\tiny \color{blue} \boldsymbol{T^{i}}$};
        \node at (1.45,0.3) {$\tiny \color{teal} \boldsymbol{\mathcal{N}_i(S)^{i}}$};
        \draw[line width=1.25pt, color=teal, ->] (0.85,0.175) -- (1.25,0.175);
\end{tikzpicture}
\subcaption{$T\in \uncutmesh$}
\end{subfigure}
\begin{subfigure}{0.55\textwidth}
\begin{tikzpicture}[scale=2.5]
    \fill[color=blue!20]  (1, 0) rectangle (2,1);
    \begin{scope}
    \clip (0, 0) rectangle (1,1);
    \draw[pattern=north east lines, pattern color=black] (0.4,-0.15) to[out=95,in=180] (2,0.575) -- (2,-0.15) -- (0.4,-0.15);
    \end{scope}
    \begin{scope}
    \clip (2, 0) rectangle (3,1);
    \draw[pattern=north east lines, pattern color=black] (2,0.55) to[out=0,in=-90] (2.65,1.1) -- (2,1.1) -- cycle;
    \end{scope}
    \fill[gray!40] (1,0.48) to[out=15,in=180] (2,0.55) -- (2,0) -- (1,0) -- cycle;
    \draw[line width=1.25pt, color=black] (0,0) rectangle (1,1);
    \draw[line width=1.25pt, color=black] (1,0) rectangle (2,1);
    \draw[line width=1.25pt, color=black] (2,0) rectangle (3,1);
    \draw[line width=1.25pt, color=black] (1,0) -- (1,1);
    \draw[red, line width=1.5pt] (0.4,-0.1) to[out=90,in=180] (2,0.55) to[out=0,in=-90] (2.65,1.1);
    \node at (2.65,0.66)  {\footnotesize $\bf \color{red} S_2^\Gamma$};
    \node at (1.80,0.425) {\footnotesize $\bf \color{red} T^\Gamma$};
    \node at (0.425,0.33) {\footnotesize $\bf \color{red} S_1^\Gamma$};
    \draw[black, line width = 2.75] (1.5, 0.435) circle (1pt);
    \fill[gray] (1.5, 0.435) circle (1pt);
    \draw[black, line width = 2.75] (1.45, 0.35) circle (1pt);
    \fill[gray] (1.45, 0.35)  circle (1pt);
    \draw[black, line width = 2.75] (1.55, 0.35) circle (1pt);
    \fill[gray] (1.55, 0.35)  circle (1pt); 
    \begin{scope}[xshift=1cm]
    \draw[black, line width = 2.75] (1.5, 0.435) circle (1pt);
    \fill[gray] (1.5, 0.435) circle (1pt);
    \draw[black, line width = 2.75] (1.45, 0.35) circle (1pt);
    \fill[gray] (1.45, 0.35)  circle (1pt);
    \draw[black, line width = 2.75] (1.55, 0.35) circle (1pt);
    \fill[gray] (1.55, 0.35)  circle (1pt);         
    \end{scope}
    \begin{scope}[yshift=0.325cm]
    \begin{scope}[xshift=-0.15cm]
    \draw[black, line width = 2.75] (1.5, 0.435) circle (1pt);
    \fill[blue] (1.5, 0.435) circle (1pt);
    \draw[black, line width = 2.75] (1.45, 0.35) circle (1pt);
    \fill[blue] (1.45, 0.35)  circle (1pt);
    \draw[black, line width = 2.75] (1.55, 0.35) circle (1pt);
    \fill[blue] (1.55, 0.35)  circle (1pt); 
    \end{scope}
    \end{scope}
    \filldraw[black, line width=1pt] (2, 0.325)  node[shape=diamond, draw, fill=gray, inner sep=0pt, minimum size=10pt] {};
    \filldraw[black, line width=1pt] (1.5, 1)    node[shape=diamond, draw, fill=blue, inner sep=0pt, minimum size=10pt] {};
    \filldraw[black, line width=1pt] (1.5, 0)    node[shape=diamond, draw, fill=gray, inner sep=0pt, minimum size=10pt] {};
    \filldraw[black, line width=1pt] (2.5, 0)    node[shape=diamond, draw, fill=gray, inner sep=0pt, minimum size=10pt] {};
    \filldraw[black, line width=1pt] (3, 0.5)    node[shape=diamond, draw, fill=gray, inner sep=0pt, minimum size=10pt] {};
    \filldraw[black, line width=1pt] (2.875, 1)    node[shape=diamond, draw, fill=gray, inner sep=0pt, minimum size=10pt] {};
    \filldraw[black, line width=1pt] (0.725, 0)  node[shape=diamond, draw, fill=teal, inner sep=0pt, minimum size=10pt] {};
    \filldraw[black, line width=1pt] (1, 0.25)   node[shape=diamond, draw, fill=gray, inner sep=0pt, minimum size=10pt] {};
    \filldraw[black, line width=1pt] (1, 0.75)   node[shape=diamond, draw, fill=blue, inner sep=0pt, minimum size=10pt] {};
    \filldraw[black, line width=1pt] (2, 0.825)  node[shape=diamond, draw, fill=blue, inner sep=0pt, minimum size=10pt] {};
    \filldraw[black, line width=1pt] (2.375, 1)  node[shape=diamond, draw, fill=teal, inner sep=0pt, minimum size=10pt] {};
    \begin{scope}[xshift=-1.05cm]
    \begin{scope}[yshift=0.15cm]
    \draw[black, line width = 2.75] (1.5, 0.585) circle (1pt);
    \fill[blue] (1.5, 0.585) circle (1pt);
    \draw[black, line width = 2.75] (1.45, 0.5) circle (1pt);
    \fill[blue] (1.45, 0.5)  circle (1pt);
    \draw[black, line width = 2.75] (1.55, 0.5) circle (1pt);
    \fill[blue] (1.55, 0.5)  circle (1pt);  
    \end{scope}
    \end{scope}
    \begin{scope}[xshift=-0.775cm]
    \begin{scope}[yshift=-0.325cm]
    \draw[black, line width = 2.75] (1.5, 0.585) circle (1pt);
    \fill[teal] (1.5, 0.585) circle (1pt);
    \draw[black, line width = 2.75] (1.45, 0.5) circle (1pt);
    \fill[teal] (1.45, 0.5)  circle (1pt);
    \draw[black, line width = 2.75] (1.55, 0.5) circle (1pt);
    \fill[teal] (1.55, 0.5)  circle (1pt);  
    \end{scope}
    \end{scope}
    \begin{scope}[xshift=0.75cm]
    \begin{scope}[yshift=0.25cm]
    \draw[black, line width = 2.75] (1.5, 0.585) circle (1pt);
    \fill[teal] (1.5, 0.585) circle (1pt);
    \draw[black, line width = 2.75] (1.45, 0.5) circle (1pt);
    \fill[teal] (1.45, 0.5)  circle (1pt);
    \draw[black, line width = 2.75] (1.55, 0.5) circle (1pt);
    \fill[teal] (1.55, 0.5)  circle (1pt);  
    \end{scope}
    \end{scope}
    \filldraw[black, line width=1pt] (0, 0.5) node[shape=diamond, draw, fill=blue, inner sep=0pt, minimum size=10pt] {};
    \filldraw[black, line width=1pt] (0.5, 1) node[shape=diamond, draw, fill=blue, inner sep=0pt, minimum size=10pt] {};
    \filldraw[black, line width=1pt] (0.2, 0) node[shape=diamond, draw, fill=blue, inner sep=0pt, minimum size=10pt] {};
    \node at (0.15,0.9) {$\tiny \color{blue} \boldsymbol{S^{\ibar}_1}$};
    \node at (0.55,-0.1) {$\tiny \color{darkgray} \boldsymbol{S^{i}_1}$};
    \node at (1.85,0.1) {$\tiny \color{darkgray} \boldsymbol{T^{i}}$};
    \node at (1.2,0.9) {$\tiny \color{blue} \boldsymbol{T^{\ibar}}$};
    \node at (1.35,0.2) {$\tiny \color{teal} \boldsymbol{\mathcal{N}_i(S_1)^{i}}$};
    \draw[line width=0.75pt, color=teal, ->] (0.85,0.075) -- (1.175,0.075);
    \draw[line width=0.75pt, color=teal, <-] (1.85,0.65) -- (2.175,0.65);
    \node at (1.7,0.75) {$\tiny \color{teal} \boldsymbol{\mathcal{N}_{\ibar}(S_2)^{\ibar}}$};
    \node at (2.15,1.1) {$\tiny \color{blue} \boldsymbol{S^{\ibar}_2}$};
    \node at (2.85,0.1) {$\tiny \color{darkgray} \boldsymbol{S^{i}_2}$};
\end{tikzpicture}
\subcaption{$T\in \TOKmesh$}
\end{subfigure}
\begin{subfigure}{0.4\textwidth}
\begin{tikzpicture}[scale=2.5]
\fill[color=blue!20]  (0, 0) rectangle (1,1);
\fill[gray!40] (0.5,-0.15) to[out=90,in=180] (1,0.5) -- (2,0.675) -- (2,0) -- (0.5,0);
\fill[color=gray!10]  (1, 0) rectangle (2,1);
\draw[line width=1.25pt, color=black] (0,0) rectangle (1,1);
\draw[line width=1.25pt, color=black] (1,0) rectangle (2,1);
\draw[line width=1.25pt, color=black] (1,0) -- (1,1);
\node at (0.45,0.25) {\footnotesize $\bf \color{red} T^\Gamma$};
\node at (1.6,0.80)  {\footnotesize $\bf \color{red} S^\Gamma$};
\begin{scope}
\clip (0, 0) rectangle (1,1);
\draw[pattern=north east lines, pattern color=black] (0.5,-0.15) to[out=90,in=180] (1,0.5) -- (1,0) -- cycle;
\end{scope}
\begin{scope}
\clip (1, 0) rectangle (2,1);
\draw[pattern=north east lines, pattern color=black] (1,0.5) to[out=0,in=-90] (1.5,1.1) -- (1,1) -- cycle;
\end{scope}
\draw[red, line width=1.5pt] (0.5,-0.15) to[out=90,in=180] (1,0.5) to[out=0,in=-90] (1.5,1.1);
\begin{scope}
\begin{scope}[yshift=0.1cm]
\draw[black, line width = 2.75] (1.5, 0.435) circle (1pt);
\fill[red] (1.5, 0.435) circle (1pt);
\draw[black, line width = 2.75] (1.45, 0.35) circle (1pt);
\fill[red] (1.45, 0.35)  circle (1pt);
\draw[black, line width = 2.75] (1.55, 0.35) circle (1pt);
\fill[red] (1.55, 0.35)  circle (1pt);
\end{scope}
\end{scope} 
\begin{scope}[yshift=0.4cm]
\begin{scope}[xshift=-0.25cm]
\draw[black, line width = 2.75] (1.5, 0.435) circle (1pt);
\fill[teal] (1.5, 0.435) circle (1pt);
\draw[black, line width = 2.75] (1.45, 0.35) circle (1pt);
\fill[teal] (1.45, 0.35)  circle (1pt);
\draw[black, line width = 2.75] (1.55, 0.35) circle (1pt);
\fill[teal] (1.55, 0.35)  circle (1pt); 
\end{scope}
\end{scope}
\filldraw[black, line width=1pt] (2, 0.5)  node[shape=diamond, draw, fill=gray, inner sep=0pt, minimum size=10pt] {};
\filldraw[black, line width=1pt] (1.25, 1)    node[shape=diamond, draw, fill=teal, inner sep=0pt, minimum size=10pt] {};
\filldraw[black, line width=1pt] (1.5, 0)    node[shape=diamond, draw, fill=gray, inner sep=0pt, minimum size=10pt] {};
\filldraw[black, line width=1pt] (0.75, 0)  node[shape=diamond, draw, fill=teal, inner sep=0pt, minimum size=10pt] {};
\filldraw[black, line width=1pt] (1, 0.25)   node[shape=diamond, draw, fill=gray, inner sep=0pt, minimum size=10pt] {};
\filldraw[black, line width=1pt] (1, 0.75)   node[shape=diamond, draw, fill=blue, inner sep=0pt, minimum size=10pt] {};
\begin{scope}[xshift=-1.1cm]
\draw[black, line width = 2.75] (1.5, 0.585) circle (1pt);
\fill[blue] (1.5, 0.585) circle (1pt);
\draw[black, line width = 2.75] (1.45, 0.5) circle (1pt);
\fill[blue] (1.45, 0.5)  circle (1pt);
\draw[black, line width = 2.75] (1.55, 0.5) circle (1pt);
\fill[blue] (1.55, 0.5)  circle (1pt);  
\end{scope}
\begin{scope}[xshift=-0.7cm]
\begin{scope}[yshift=-0.3cm]
\draw[black, line width = 2.75] (1.5, 0.585) circle (1pt);
\fill[teal] (1.5, 0.585) circle (1pt);
\draw[black, line width = 2.75] (1.45, 0.5) circle (1pt);
\fill[teal] (1.45, 0.5)  circle (1pt);
\draw[black, line width = 2.75] (1.55, 0.5) circle (1pt);
\fill[teal] (1.55, 0.5)  circle (1pt);  
\end{scope}
\end{scope}
\filldraw[black, line width=1pt] (0, 0.5) node[shape=diamond, draw, fill=blue, inner sep=0pt, minimum size=10pt] {};
\filldraw[black, line width=1pt] (0.5, 1) node[shape=diamond, draw, fill=blue, inner sep=0pt, minimum size=10pt] {};
\filldraw[black, line width=1pt] (0.3, 0) node[shape=diamond, draw, fill=blue, inner sep=0pt, minimum size=10pt] {};
\node at (0.1,0.9) {$\tiny \color{blue} \boldsymbol{T^{\ibar}}$};
\node at (0.9,-0.1) {$\tiny \color{darkgray} \boldsymbol{T^{i}}$};
\node at (1.9,0.1) {$\tiny \color{darkgray} \boldsymbol{S^{i}}$};
\node at (1.1,1.1) {$\tiny \color{blue} \boldsymbol{S^{\ibar}}$};
\node at (1.45,0.2) {$\tiny \color{teal} \boldsymbol{\mathcal{N}_i(T)^{i}}$};
\node at (0.7,0.72) {$\tiny \color{teal} \boldsymbol{\mathcal{N}_{\ibar}(S)^{\ibar}}$};
\draw[line width=1.25pt, color=teal, ->] (0.85,0.10) -- (1.25,0.10);
\begin{scope}[yshift=0.5cm]
\begin{scope}[xshift=-0.05cm]
\draw[line width=1.25pt, color=teal, <-] (0.85,0.10) -- (1.25,0.10);
\end{scope}
\end{scope}
\end{tikzpicture}
\vspace{-0.25cm}
\subcaption{$T\in \TKOmesh$}
\end{subfigure}
\caption{Local stencil for gradient reconstruction operator}
\label{fig:reconstruction}
\end{figure}
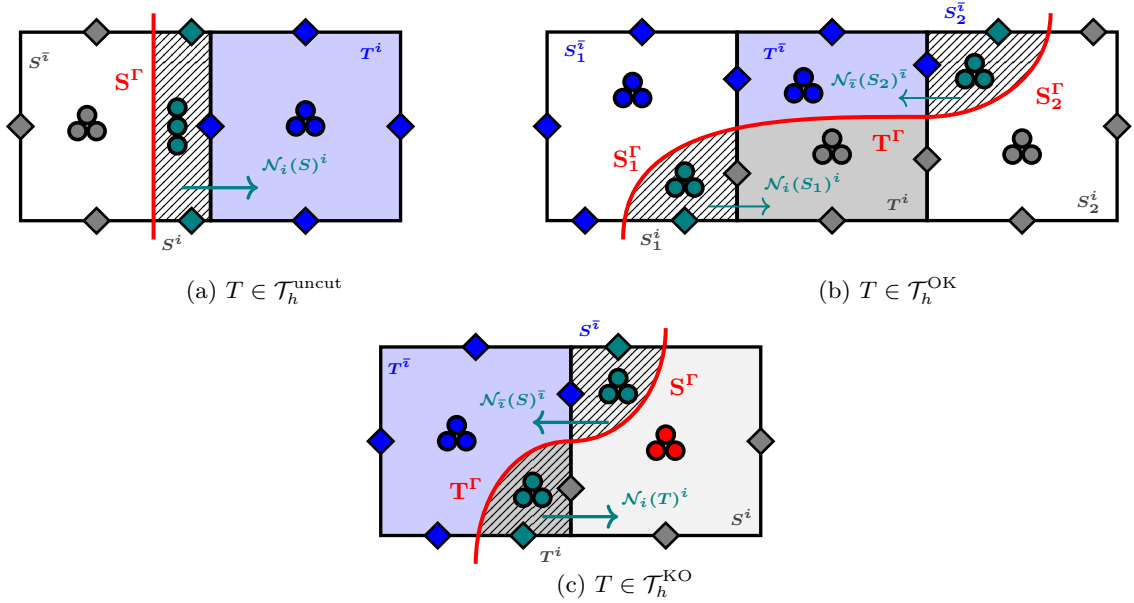

Since the discretization method is formulated on the mesh sub-cells, we
consider the collection thereof, which is conveniently described as a subset of
the product set $\mesh \times \{1,2\}$. Specifically, we set 
\begin{subequations}
\begin{align}
\pmeshOK &:= \{ (T,\iota(T)) \,\mid\, T\in\uncutmesh\} \cup \{ (T,1),(T,2) \,\mid\, T\in\TOKmesh\} \cup \{ (T,\ibar), \,\mid\, T\in\TKOmesh, i:=\iota(T)\}, \\
\pmeshKO &:= \{ (T,i) \,\mid\, T\in\TKOmesh, i:=\iota(T)\}. 
\end{align}
\end{subequations}
Notice that $\pmesh := \pmeshOK\cup \pmeshKO$ is the collection of all the mesh sub-cells.
As a consequence of~\eqref{decomposition}, the subset $\pmeshKO$ can be enumerated 
as follows:
\begin{equation} \label{eq:enum_PmeshKO}
\pmeshKO = \{ (S,i) \in \pmesh \,\mid\, S\in \N_i^{-1}(T), \, (T,i)\in\pmeshOK\}.
\end{equation}

The above notation allows us to rewrite in a synthetic form the formula for the
gradient reconstruction while avoiding the proliferation of cases. Indeed,
one readily sees that the formulas \eqref{grad_thi}-\eqref{grad_tok}-\eqref{grad_tko}
are equivalent to setting, 
for all $(T,i)\in\pmeshOK$, all 
$\uThatplus \in \UThatEX$ and all $\q \in \P^k(T^i;\R^d)$,
\begin{align}
(\GTi(\uThatplus),\q)_{T^i} := {}& (\nabla \uTi, \q)_{T^i} + (\udTi-\uTi, \q \SCAL \nT)_
{\dTi} - \delta_{i1}(\llbracket \uT \rrbracket_{\Gamma}, \q \SCAL \nG)_{T^{\Gamma}} \nonumber \\
&  + \sumexi \Big\{ (\udSi - \uSi, \q^+ \SCAL \nS)_{\dSi} - \delta_{i1} 
(\jumpuS_{\Gamma}, \q^+ \SCAL \nG)_{S^{\Gamma}}\Big\},
\label{def_grad_pmeshOK}
\end{align}
where the summation over $S\in \N_i^{-1}(T)$ is void if $T$ is not in the image of $\N_i$.
Moreover, \eqref{grad_tko_grad} means that, for all $(T,i)\in\pmeshKO$, we set
\begin{equation}
\GTi(\uThatplus) := \nabla \uTi.
\label{def_grad_pmeshKO}
\end{equation}

For all $(T,1)\in\pmeshOK$, it is convenient to define the discrete lifting operator 
$\lift^k_{T^1}:
L^2(\Gamma) \rightarrow \P^k(T^1;\R^d)$ such that, for all $g\in L^2(\Gamma)$ and all 
$\q \in \P^k(T^1;\R^d)$,
\begin{equation} \label{eq:def_lift}
(\lift^k_{T^1}(g),\q)_{T^1} := (g,\q\SCAL\nG)_{\TG} + \sum_{S\in\N_1^{-1}(T)} (g,\q^+\SCAL\nG)_{\SG}.
\end{equation}
It is also convenient to define on $\Gamma$ the function given by the jump of the cell
components of a discrete HHO variable. Specifically, we define the 
function $\llbracket u_{\mesh} \rrbracket_{\Gamma}$ on $\Gamma$ such that, 
for all $T\in\cutmesh$,
$\llbracket u_{\mesh} \rrbracket_{\Gamma} := u_{T^1}|_{\TG} - u_{T^2}|_{\TG}$.
Then, for all $(T,i)\in\pmeshOK$, we can rewrite~\eqref{def_grad_pmeshOK} as follows:
\begin{align}
(\GTi(\uThatplus)+\delta_{i1}\lift^k_{T^1}(\llbracket u_{\mesh} \rrbracket_{\Gamma}),\q)_{T^i} := {}& (\nabla \uTi, \q)_{T^i} + (\udTi-\uTi, \q \SCAL \nT)_
{\dTi}  \nonumber \\
&  + \sumexi (\udSi - \uSi, \q^+ \SCAL \nS)_{\dSi}.
\label{def_grad_pmeshOK_lift}
\end{align}

\subsection{Stabilization}
\label{sec:stab}

We consider three local stabilization bilinear forms.
The first one is the usual HHO stabilization bilinear form in the 
mixed-order setting (in the spirit of the HDG
Lehrenfeld--Sch\"oberl stabilization \cite{LS:16}).
Its role is to weakly enforce the matching
between cell- and face-based HHO unknowns on the sub-faces of the mesh sub-cells
in each sub-domain. 
For all $\vhat, \what \in \Uhatz$,
we set
\begin{equation}
s_h^\circ(\vhat,\what) := \sum_{(T,i) \in \pmesh} \kappa_i 
h_T^{-1}(\projT(\vTi)-\vdTi, \projT(w_{T^i})-w_{\dTi})_{\dTi},
\end{equation}
where $\projT$ denotes the $L^2$-orthogonal projector onto $\P^k(\localfacesi;\R)$.

The second stabilization bilinear form is the same as the one
introduced in \cite{BE18,BCDE21} to control the jumps across the interface $\Gamma$. Specifically, we set, for all $\vhat, \what \in \Uhatz$,
\begin{equation}
s_h^\Gamma(\vhat,\what) := \sum_{T\in\cutmesh} \kappa_1 h_T^{-1}(\jumpvT_{\Gamma},\jumpwT_{\Gamma})_{\TG}.
\end{equation}

The third stabilization bilinear form is specific to the present setting
using polynomial extensions. Its aim\ is to provide some control on the cell
components in ill-cut cells \rev{and is devised in the same spirit as the so-called 
direct ghost penalty method from \cite{Preuss:18,LehOl:19}}. Specifically, we set, for all
$\vhat, \what \in \Uhatz$, 
\begin{equation} \label{eq:def_shN}
\rev{s_h^{\N}(\vhat,\what) := \sum_{(T,i)\in\pmeshOK} \sum_{S\in\N_i^{-1}(T)} 
\eta_{\N} \kappa_i h_T^{-2}(\vSi-\vTi^+,\wSi-\wTi^+)_{T^i},}
\end{equation}
recalling that the superscript ${}^+$ denotes the extension of a polynomial 
originally defined on $T^i$ to \rev{$\Delta(T)$, and $\eta_{\N}>0$ is a user-defined 
parameter.} 

Putting everything together, we define, for all $\vhat, \what \in \Uhatz$,
\begin{equation} \label{eq:def.sh}
s_h(\vhat, \what) := s_h^{\circ}(\vhat,\what) + s_h^{\Gamma}(\vhat,\what) + s_h^{\N}(\vhat,\what).
\end{equation}

\subsection{Discrete problem}

The global discrete problem reads as follows: Find $\uhat \in \Uhatz$ such that,
\begin{equation}
a_h(\uhat, \what) = \ell_h(\what) \quad \forall \what \in \Uhatz,
\label{problem}
\end{equation}
where the bilinear $a_h$ is defined as follows: For all
$\vhat, \what \in \Uhatz$,
\begin{equation}
a_h(\vhat,\what) := b_h(\vhat,\what)
+ s_h(\vhat, \what),
\end{equation}
with
\begin{equation} \label{eq:def.bh}
b_h(\vhat,\what) := \sum_{(T,i)\in\pmesh} \kappa_i (\GTi(\vThatplus), \GTi
(\wThatplus))_{T^i}.
\end{equation}
Moreover, the linear form $\ell_h$ is such that, for all $\what \in \Uhatz$,
\begin{align} 
\ell_h (\what) := {}&\!\!\sum_{(T,i) \in \pmesh}\!\! (f,w_{T^i})_{T^i} + \!\!\sum_{T\in\cutmesh}\!\! \Big\{ (g_N,w_{T^2})_{\TG} + \kappa_1h_T^{-1}(g_D,\jumpwT_\Gamma)_{\TG}\Big\} \nonumber \\
&- \!\!\sum_{(T,1)\in\pmeshOK}\!\!\kappa_1(\lift^k_{T^1}(g_D),\GTo(\wThatplus))_{T^1}, \label{eq:def.ellh}
\end{align}
where the lifting operator $\lift^k_{T^1}$ is defined in~\eqref{eq:def_lift}. 
Notice that, in the discrete problem~\eqref{problem}, only the Dirichlet boundary
condition is enforced explicitly (on the face unknowns located on the boundary 
$\partial\Omega$), whereas both jump conditions across the interface, \eqref{eq:strong.D}
and \eqref{eq:strong.N}, are enforced weakly. 
We notice that cell unknowns can still be eliminated from~\eqref{problem}
by static condensation. Whenever $\N^{-1}(T)$ is nonempty for a mesh cell $T\in\mesh$,
all the cell unknowns attached to $T\cup \bigcup_{S\in\N^{-1}(T)}S$ are blocked together.

To prepare for the consistency error analysis, we state a useful result on the 
discrete bilinear and linear forms. \rev{Although this lemma is only used in 
Section~\ref{sec:analysis}, it is given here so as to highlight why the
definitions~\eqref{def_grad_pmeshOK}-\eqref{def_grad_pmeshKO} of the reconstructed
gradient are meaningful.}

\begin{lemma}[Consistency: preparatory identity] \label{lem:preparatory}
Let $\rev{u\upex}$ be the weak solution to~\eqref{eq:weak}. 
Let $\vhat\in\Uhatz$ be arbitrary and set 
\begin{subequations} \begin{alignat}{2}
\bdelta_{T,i} &:= \GTi(\vThatplus)
+ \delta_{i1} \lift_{T^1}^k(\jumpu_\Gamma) - \nabla \rev{u\upex_i}|_{T^i}, 
&\qquad &\forall (T,i)\in\rev{\pmeshOK}, \\
\bdelta_{T,S,i} &:= \big\{\GTi(\vThatplus)
+ \delta_{i1} \lift_{T^1}^k(\jumpu_\Gamma)\big\}^+|_{S^i} - \nabla \rev{u\upex_i}|_{S^i}, 
&\qquad &\forall (T,i)\in\pmeshOK, \, \forall S\in \N_i^{-1}(T),\\
\rev{\bdelta_{T,i}} &:= \rev{\GTi(\vThatplus)- \nabla \rev{u\upex_i}|_{T^i} = \nabla \vTi - \nabla \rev{u\upex_i}|_{T^i},} 
&\qquad &\rev{\forall (T,i)\in\rev{\pmeshKO}}.
\end{alignat}\end{subequations}
Then, the following holds for all $\what \in \Uhatz$,
\begin{align}
a_h(\vhat,\what)-\ell_h(\what) = {}& 
\sum_{(T,i)\in\pmesh} \kappa_i (\bdelta_{T,i},\nabla \wTi)_{T^i} \nonumber \\
& + \sum_{(T,i)\in\pmeshOK} \kappa_i \bigg\{ (\bdelta_{T,i} \SCAL \nT,\wdTi-\wTi)_{\dTi} 
- \delta_{i1}(\bdelta_{T,1} \SCAL \nG,\jumpwT_{\Gamma})_{T^{\Gamma}} \nonumber \\
& + \sum_{S\in\N_i^{-1}(T)}  \big\{ 
(\bdelta_{T,S,i} \SCAL \nS,\wdSi - \wSi)_{\dSi} - \delta_{i1} 
(\bdelta_{T,S,1} \SCAL \nG,\jumpwS_{\Gamma})_{\SG}\big\}\bigg\} \nonumber \\
& + s_h^\circ(\vhat,\what) + s_h^\N(\vhat,\what) + \sum_{T\in\cutmesh} \kappa_1h_T^{-1}
(\llbracket v_T-\rev{u\upex}\rrbracket_\Gamma,\jumpwT_\Gamma)_{\TG}.
\label{eq:identity_consist}
\end{align}
\end{lemma}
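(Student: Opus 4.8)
The proof is by direct algebraic manipulation of the definitions, combined with one continuous integration by parts on each mesh sub-cell.

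First I would dispose of the terms that pass through unchanged. By \eqref{eq:def.sh}, the forms $s_h^\circ$ and $s_h^\N$ already appear verbatim on the right-hand side of \eqref{eq:identity_consist}, so they may be set aside. The form $s_h^\Gamma(\vhat,\what)$ combines with the term $\sum_{T\in\cutmesh}\kappa_1 h_T^{-1}(g_D,\jumpwT_\Gamma)_{\TG}$ of $\ell_h$ in \eqref{eq:def.ellh}: since $u\upex\in V_{g_D}$ satisfies $\llbracket u\upex\rrbracket_\Gamma=g_D$ on $\Gamma$, their difference is exactly the last term $\sum_{T\in\cutmesh}\kappa_1 h_T^{-1}(\llbracket v_T-u\upex\rrbracket_\Gamma,\jumpwT_\Gamma)_{\TG}$ of \eqref{eq:identity_consist}. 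It therefore remains to treat $b_h(\vhat,\what)$ against the source, Neumann, and lifting contributions of $\ell_h$.

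Next I would rewrite the reconstructed gradient of $\vhat$. The crucial observation is that, for every $(T,i)\in\pmesh$, the quantity $\q_{T,i}:=\nabla u\upex_i|_{T^i}+\bdelta_{T,i}$ is a polynomial in $\P^k(T^i;\R^d)$: indeed, the definition of $\bdelta_{T,i}$ gives $\q_{T,i}=\GTi(\vThatplus)+\delta_{i1}\lift_{T^1}^k(\jumpu_\Gamma)$ on $\pmeshOK$ and $\q_{T,i}=\nabla \vTi$ on $\pmeshKO$, while likewise $\q_{T,i}^+|_{S^i}=\nabla u\upex_i|_{S^i}+\bdelta_{T,S,i}$. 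Substituting $\GTi(\vThatplus)=\q_{T,i}-\delta_{i1}\lift_{T^1}^k(\jumpu_\Gamma)$ into \eqref{eq:def.bh}, the lifting correction produces exactly $-\sum_{(T,1)\in\pmeshOK}\kappa_1(\lift_{T^1}^k(g_D),\GTo(\wThatplus))_{T^1}$, which cancels the lifting term of $\ell_h$ (using once more $\jumpu_\Gamma=g_D$). Hence $b_h(\vhat,\what)$ reduces, up to the source and Neumann terms of $\ell_h$, to $\sum_{(T,i)\in\pmesh}\kappa_i(\q_{T,i},\GTi(\wThatplus))_{T^i}$. I would then expand $\GTi(\wThatplus)$ by testing its defining relations \eqref{def_grad_pmeshOK}--\eqref{def_grad_pmeshKO} against the admissible polynomial $\q=\q_{T,i}$, and split $\q_{T,i}=\nabla u\upex_i+\bdelta_{T,i}$ (and $\q_{T,i}^+|_{S^i}=\nabla u\upex_i+\bdelta_{T,S,i}$). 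The contributions carrying $\bdelta_{T,i}$ and $\bdelta_{T,S,i}$ reproduce verbatim the volume, face, interface, and extension terms of \eqref{eq:identity_consist}; it then remains to show that the contributions carrying $\nabla u\upex_i$ equal $\sum_{(T,i)\in\pmesh}(f,\wTi)_{T^i}+\sum_{T\in\cutmesh}(g_N,w_{T^2})_{\TG}$. For this I would integrate by parts on each sub-cell, using $-\nabla\SCAL(\kappa_i\nabla u\upex_i)=f$ from \eqref{eq:strong.PDE}: on each sub-face the cell-trace term $\kappa_i(\wTi,\nabla u\upex_i\SCAL\nT)_{\dTi}$ produced by the integration by parts combines with the reconstruction face term $\kappa_i(\wdTi-\wTi,\nabla u\upex_i\SCAL\nT)_{\dTi}$ so that $\wTi$ drops out, leaving $\kappa_i(\wdTi,\nabla u\upex_i\SCAL\nT)_{\dTi}$; since $\kappa_i\nabla u\upex_i$ lies in $H(\mathrm{div};\Omega_i)$ with a single-valued normal trace across interior sub-faces and $\what\in\Uhatz$ vanishes on $\partial\Omega$, these contributions telescope to zero. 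The interface contributions over $\TG$ (the two boundary terms from the two sub-cells plus the $\delta_{i1}$ jump term) recombine, via the flux jump condition \eqref{eq:strong.N}, $\llbracket\kappa\nabla u\upex\rrbracket_\Gamma\SCAL\nG=g_N$, to give precisely $(g_N,w_{T^2})_{\TG}$.

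The bookkeeping of this last step is where the main difficulty lies. One must verify that every sub-cell and every sub-face is accounted for exactly once in the extended-stencil setting, where an ill-cut sub-cell $(S,i)\in\pmeshKO$ supplies its own volume (hence source) term through \eqref{def_grad_pmeshKO}, while its face and interface terms are delegated to the reconstruction of the paired cell $T=\N_i(S)$ through the extension sums in \eqref{def_grad_pmeshOK}. The enumeration \eqref{eq:enum_PmeshKO} guarantees that this delegation is a bijection onto $\pmeshKO$, so each delegated term appears exactly once, and the convention that the interface jump is always attached to the side $i=1$ (the factor $\delta_{i1}$) ensures that the $\Gamma$-contributions recombine to $g_N$ irrespective of whether the ill-cut side is $1$ or $2$. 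Granting the mild regularity of $u\upex$ needed for the sub-face integrals of the normal flux, the telescoping closes and \eqref{eq:identity_consist} follows.
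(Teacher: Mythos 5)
Your proposal is correct and follows essentially the same route as the paper's proof: you introduce the same auxiliary polynomial ($\q_{T,i}$, the paper's $\bgamma_{T,i}$), cancel the lifting term of $\ell_h$ the same way, test the reconstruction of $\what$ against it, and reduce the exact-solution contributions via sub-cell integration by parts, the flux jump condition, and the enumeration~\eqref{eq:enum_PmeshKO}. The only difference is one of narration (you work forward from $b_h$ to recover the source and Neumann terms, while the paper rewrites $\ell_h$ by integration by parts and subtracts), which is the same computation read in the opposite direction.
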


\begin{proof}
(1) Recalling the properties~\eqref{eq:strong.PDE}-\eqref{eq:strong.N} satisfied by the exact 
solution and the definition~\eqref{eq:def.ellh} of the linear form $\ell_h$, we infer
that, for all $\what \in \Uhatz$, $\ell_h(\what) = L_1+L_2+L_3$ with
\begin{align*}
L_1 := {}& \sum_{(T,i)\in\pmesh} -(\nabla{\cdot}(\kappa_i\nabla \rev{u\upex_i}),w_{T^i})_{T^i}
+ \sum_{T\in\cutmesh} ((\kappa_1\nabla \rev{u\upex_1}-\kappa_2\nabla \rev{u\upex_2})\SCAL\nG,w_{T^2})_{\TG}, \\
L_2 := {}& \sum_{T\in\cutmesh} \kappa_1 h_T^{-1}(\jumpu_\Gamma,\jumpwT_\Gamma)_{\TG}, \qquad
L_3 :=- \sum_{(T,1)\in\pmeshOK} \kappa_1(\lift^k_{T^1}(\jumpu_\Gamma),\GTo(\wThatplus))_{T^1}.
\end{align*}
Integrating by parts and re-organizing the boundary term on $\Gamma$ (recall that
$\bn_{T^1}=-\bn_{T^2}=\nG$) gives 
\begin{align*}
\sum_{(T,i)\in\pmesh} -(\nabla{\cdot}(\kappa_i\nabla \rev{u\upex_i}),w_{T^i})_{T^i} = {}& 
\sum_{(T,i)\in\pmesh} \kappa_i\big\{ (\nabla \rev{u\upex_i},\nabla w_{T^i})_{T^i}
- (\nabla \rev{u\upex_i} \SCAL \nT,w_{T^i})_{\dTi} \big\} \\
&- \sum_{T\in\cutmesh}
\big\{ (\kappa_1\nabla \rev{u\upex_1}\SCAL\nG,w_{T^1})_{\TG}-(\kappa_2\nabla \rev{u\upex_2}\SCAL\nG,w_{T^2})_{\TG}\big\}.
\end{align*}
Since $\nabla \rev{u\upex_i}$ is single-valued 
on $\dTi\cap \Omega_i$ and $w_{\dTi}$ vanishes on the mesh boundary faces, we obtain
\begin{align*}
\sum_{(T,i)\in\pmesh} -(\nabla{\cdot}(\kappa_i\nabla \rev{u\upex_i}),w_{T^i})_{T^i}
= {}& \sum_{(T,i)\in\pmesh} \kappa_i\big\{ (\nabla \rev{u\upex_i},\nabla w_{T^i})_{T^i}
+ (\nabla \rev{u\upex_i} \SCAL \nT,w_{\dTi}-w_{T^i})_{\dTi} \big\} \\
&- \sum_{T\in\cutmesh}
\big\{ (\kappa_1\nabla \rev{u\upex_1}\SCAL\nG,w_{T^1})_{\TG}-(\kappa_2\nabla \rev{u\upex_2}\SCAL\nG,w_{T^2})_{\TG}\big\}.
\end{align*}
Therefore, we have
\[
L_1 = \sum_{(T,i)\in\pmesh} \kappa_i\Big\{ (\nabla \rev{u\upex_i},\nabla w_{T^i})_{T^i}
+ (\nabla \rev{u\upex_i} \SCAL \nT,w_{\dTi}-w_{T^i})_{\dTi} \Big\} - \sum_{T\in\cutmesh} \kappa_1(\nabla \rev{u\upex_1}\SCAL\nG,\jumpwT_{\Gamma})_{\TG}.
\]
A rewriting of the last term on the right-hand side gives
\[
L_1 = \sum_{(T,i)\in\pmesh} \kappa_i\Big\{ (\nabla \rev{u\upex_i},\nabla w_{T^i})_{T^i}
+ (\nabla \rev{u\upex_i} \SCAL \nT,w_{\dTi}-w_{T^i})_{\dTi} - \delta_{i1} (\nabla \rev{u\upex_1}\SCAL\nG,\jumpwT_{\Gamma})_{\TG}\Big\}.
\]
We now use the enumeration formula~\eqref{eq:enum_PmeshKO} for the pairs $(T,i)\in
\pmeshKO$. This gives
\begin{align*}
L_1 = {}& \sum_{(T,i)\in\pmesh} \kappa_i (\nabla \rev{u\upex_i},\nabla \wTi)_{T^i} \\
& + \sum_{(T,i)\in\pmeshOK} \kappa_i \bigg\{ (\nabla \rev{u\upex_i} \SCAL \nT,\wdTi-\wTi)_{\dTi} 
- \delta_{i1}(\nabla \rev{u\upex_1} \SCAL \nG,\jumpwT_{\Gamma})_{T^{\Gamma}} \\
& + \sum_{S\in\N_i^{-1}(T)}  \big\{ 
(\nabla \rev{u\upex_i} \SCAL \nS,\wdSi - \wSi)_{\dSi} - \delta_{i1} 
(\nabla \rev{u\upex_1} \SCAL \nG,\jumpwS_{\Gamma})_{\SG}\big\} \bigg\}.
\end{align*}

(2) For all $\vhat \in \Uhatz$, setting $\bgamma_{T,i}:= \GTi(\vThatplus)
+ \delta_{i1} \lift_{T^1}^k(\jumpu_\Gamma)$ for all $(T,i)\in\pmeshOK$
and $\bgamma_{T,i}:= \GTi(\vThatplus) \rev{=\nabla \vTi}$ for all $(T,i)\in\pmeshKO$, 
and using the definition~\eqref{eq:def.bh} of the bilinear form $b_h$ \rev{together
with the definition of $L_3$ given above}, we infer that
\begin{align*}
b_h(\vhat,\what) - L_3 = {}&\sum_{(T,i)\in\pmesh} \kappa_i (\GTi(\vThatplus), \GTi
(\wThatplus))_{T^i} + \sum_{(T,1)\in\pmeshOK} \kappa_1(\lift^k_{T^1}(\jumpu_\Gamma),\GTo(\wThatplus))_{T^1}\\
= {}& \sum_{(T,i)\in\pmeshOK} \kappa_i (\bgamma_{T,i}, \GTi
(\wThatplus))_{T^i} + \sum_{(T,i)\in\pmeshKO} \kappa_i (\bgamma_{T,i}, \GTi
(\wThatplus))_{T^i}.
\end{align*}
Using the definitions~\eqref{def_grad_pmeshOK}-\eqref{def_grad_pmeshKO} of the 
gradient reconstruction operator and 
since $\bgamma_{T,i}\in \P^k(T^i;\R^d)$ for all $(T,i)\in\pmesh$, we obtain
\begin{align*}
b_h(\vhat,\what) = {}& 
\sum_{(T,i)\in\pmesh} \kappa_i (\bgamma_{T,i},\nabla \wTi)_{T^i} \\
& + \sum_{(T,i)\in\pmeshOK} \kappa_i \bigg\{ (\bgamma_{T,i} \SCAL \nT,\wdTi-\wTi)_{\dTi} 
- \delta_{i1}(\bgamma_{T,1} \SCAL \nG,\jumpwT_{\Gamma})_{T^{\Gamma}} \\
& + \sum_{S\in\N_i^{-1}(T)}  \big\{ 
(\bgamma_{T,i}^+ \SCAL \nS,\wdSi - \wSi)_{\dSi} - \delta_{i1} 
(\bgamma_{T,1}^+ \SCAL \nG,\jumpwS_{\Gamma})_{\SG}\big\}\bigg\}.
\end{align*}

(3) Combining the identities from Steps (1) and (2) and recalling that $a_h=b_h+s_h$
completes the proof.
\end{proof}

\section{Stability, consistency, and error estimate}
\label{sec:analysis}

In this section, we perform the stability and error analysis for the unfitted HHO
method introduced in the previous section.  
We use the convention $A \lesssim B$ to abbreviate the inequality $A \leq C B$ for
positive real numbers $A$ and $B$, where the constant $C$ only depends on 
the polynomial degree, the mesh shape-regularity, the parameter $\vartheta$ used in the definition of the pairing operator, and the space dimension.

\subsection{Analysis tools}
\label{sec:tools}

In this section, we present the main tools to perform the error analysis. 
The proofs are postponed to Section~\ref{sec:proofs}.
For all $(T,i)\in \pmeshOK$, \rev{recall that} the superscript ${}^+$ is used here to
indicate the extension of a polynomial originally defined on $T^i$ to $\Delta(T)$
(observe that $T\cup \bigcup_{S\in\N_i^{-1}(T)}S\subset \Delta(T)$).

\begin{lemma}[Discrete inverse inequalities] \label{lem:disc_trace_app}
The following holds for all $(T,i)\in\pmeshOK$, all $\phi\in \P^\ell(T^i;\R)$, and all $\ell\ge0$, 
\begin{subequations} \begin{align}
&\sum_{S\in\{T\}\cup \N_i^{-1}(T)} \Big\{ \|\phi^+\|_{S} +
h_S^{\frac12} \|\phi^+\|_{\dSi\cup\SG} \Big\} \lesssim \|\phi\|_{T^i},
\label{eq:disc_trace_OK} \\
&\sum_{S\in\{T\}\cup \N_i^{-1}(T)} 
h_S^{-\frac12} \|(I-\projS)(\phi^+)\|_{\dSi} \lesssim \|\nabla \phi\|_{T^i}.
\label{eq:disc.poinc}
\end{align} \end{subequations}
\end{lemma}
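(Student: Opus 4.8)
The statement is a pair of discrete inverse-type inequalities that must hold uniformly with respect to how the interface cuts the cells, including the ill-cut cells $S \in \N_i^{-1}(T)$ whose sub-cell $S^i$ may be arbitrarily small. The whole point is that we control everything by a norm on the \emph{well-cut or uncut} sub-cell $T^i$, which by construction contains a ball of radius $\vartheta h_T$. I would organize the proof around this one structural fact, which is what makes the constants uniform.

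The plan is as follows. First I would treat the term $S=T$ in each sum, which is standard: since $T^i$ contains a ball of radius $\vartheta h_T$ and is contained in $T$, a scaling argument on the reference configuration gives the norm equivalence $\|\phi\|_{T} \lesssim \|\phi\|_{T^i}$ for polynomials of bounded degree (the constant depending on $\vartheta$, the degree, and shape-regularity), together with the classical discrete trace inequality $h_T^{1/2}\|\phi\|_{\partial T} \lesssim \|\phi\|_{T^i}$ and the local Poincaré–type estimate $h_T^{-1/2}\|(I-\projT)\phi\|_{\partial T^i} \lesssim \|\nabla \phi\|_{T^i}$. These rely on the fact that all norms on the finite-dimensional space $\P^\ell$ are equivalent once rescaled to the reference cell, and the nondegeneracy of the inscribed ball prevents the constants from blowing up.

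The main work — and the main obstacle — is controlling the extended polynomial $\phi^+$ on the ill-cut neighbors $S \in \N_i^{-1}(T)$. Here $\phi$ is defined on $T^i$ and $\phi^+$ is its (unique, degree-preserving) polynomial extension to $\Delta(T)$; since $S \subset \Delta(T)$, the quantity $\|\phi^+\|_S$ makes sense. The key estimate to establish is a \emph{stability of polynomial extension} bound of the form $\|\phi^+\|_{\conv(T)} \lesssim \|\phi\|_{T^i}$, i.e. the $L^2$-norm of the algebraic extension over the convex hull is controlled by the $L^2$-norm over the inscribed sub-region $T^i$. This is where assumption~\eqref{eq:conv_delta}, $\conv(T) \subset \Delta(T)$, and the nondegeneracy of $T^i$ enter decisively: because $T^i$ contains a ball of radius $\vartheta h_T$ and $\conv(T)$ has diameter $\lesssim h_T$ (by shape-regularity of $\Delta(T)$), the ratio of the two regions is bounded, so a compactness/equivalence-of-norms argument on a reference configuration yields the extension bound with a constant depending only on $\vartheta$, $\ell$, shape-regularity, and $d$. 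Once this is in hand, for each neighbor $S$ I would write $\|\phi^+\|_S \le \|\phi^+\|_{\conv(T)} \lesssim \|\phi\|_{T^i}$, and obtain the boundary terms $h_S^{1/2}\|\phi^+\|_{\partial S^i \cup S^\Gamma}$ by applying a discrete trace inequality to the polynomial $\phi^+$ on a reference-scale region containing $S$. Because the number of neighbors $|\N_i^{-1}(T)|$ is bounded by shape-regularity, summing over $S$ only costs a bounded constant. This proves~\eqref{eq:disc_trace_OK}.

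For the second inequality~\eqref{eq:disc.poinc}, the presence of the projector $\projS$ means constant shifts are annihilated, so the right-hand side is a seminorm $\|\nabla\phi\|_{T^i}$ rather than a full norm. The strategy is to subtract a constant: write $(I-\projS)(\phi^+) = (I-\projS)(\phi^+ - c)$ for any constant $c$, choose $c$ to be (say) the mean of $\phi^+$ over $T^i$, apply the $L^2$-boundedness of $\projS$ and the trace inequality to pass to $h_S^{-1/2}\|(I-\projS)(\phi^+)\|_{\partial S^i} \lesssim h_S^{-1}\|\phi^+ - c\|_S$, then invoke the extension stability together with a Poincaré inequality on $T^i$, $\|\phi - c\|_{T^i} \lesssim h_T \|\nabla \phi\|_{T^i}$, valid because $T^i$ contains a ball of radius $\vartheta h_T$ and is of diameter $\lesssim h_T$. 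The factor $h_S^{-1}$ is absorbed against the $h_T$ from Poincaré using $h_S \simeq h_T$ (neighbors in $\Delta(T)$ have comparable size by shape-regularity). I expect the genuinely delicate point throughout to be the extension-stability bound $\|\phi^+\|_{\conv(T)} \lesssim \|\phi\|_{T^i}$: it is the one estimate that is \emph{not} a rescaled classical result, and its uniformity in the cut configuration is exactly what rests on $\vartheta$-nondegeneracy of $T^i$ and the boundedness of $\conv(T)/\Delta(T)$, so I would isolate it as the crux and prove it first on a reference configuration before assembling the rest.
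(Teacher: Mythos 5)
Your overall strategy is the paper's: the crux is indeed a polynomial-extension stability bound obtained from norm equivalence between a ball of radius $\vartheta h_T$ inscribed in $T^i$ (available because $(T,i)\in\pmeshOK$) and a region of diameter $\lesssim h_T$ covering everything else, followed by discrete trace inequalities for the boundary terms and, for \eqref{eq:disc.poinc}, subtraction of a mean value plus a discrete Poincar\'e inequality. The paper implements exactly this, quoting the two-ball inverse inequality and the discrete Poincar\'e inequality from earlier work rather than re-deriving them by a reference-configuration compactness argument, and it takes the mean over the inscribed ball $\ball(T,i)$ rather than over $T^i$; these are cosmetic differences.

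There is, however, one step that fails as written. You claim $\|\phi^+\|_S \le \|\phi^+\|_{\conv(T)}$ for $S\in\N_i^{-1}(T)$, which requires $S\subset\conv(T)$. This is false in general: $S$ is a \emph{neighbor} of $T$ (the pairing operator maps $S$ to some $T\in\Delta(S)$, equivalently $S\subset\Delta(T)$), and the convex hull of $T$ has no reason to contain $T$'s neighbors. Assumption~\eqref{eq:conv_delta} states $\conv(T)\subset\Delta(T)$, not the reverse, and it actually plays no role in this lemma (it is needed later, in the proof of Lemma~\ref{lem:approx.IkT}, to place the apex of the cone used for the multiplicative trace inequality). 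The correct containment is $S\subset\Delta(T)$, so the extension-stability bound must be proved in the form $\|\phi^+\|_{\Delta(T)}\lesssim\|\phi\|_{T^i}$ --- which your two-ball argument delivers verbatim once you replace $\conv(T)$ by a ball of diameter $\gamma h_T$ containing $\Delta(T)$ (such a ball exists by shape-regularity). With that substitution the rest of your argument, including the treatment of the trace terms on $\dSi\cup\SG$ via the ball condition satisfied by the aggregated cell $T\cup\N_i^{-1}(T)$ and the mean-subtraction argument for \eqref{eq:disc.poinc}, goes through and coincides with the paper's proof.
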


For all $s\ge0$ and all $i\in\{1,2\}$, let $E_i^s:H^s(\Omega_i)\rightarrow H^s(\R^d)$ be a 
stable extension operator. 
For all $v\in H^s(\Omega_1\cup \Omega_2)$ with $s>\frac12$ and all $(T,i)\in\pmeshOK$, we define
\begin{equation} \label{eq:def_IkT}
I^{k+1}_{T^i}(v_i) := \Pi^{k+1}_{T}(E_i^s(v_i))|_{T^i} \in \P^{k+1}(T^i;\R), 
\end{equation}
where $\Pi^{k+1}_{T}$ denotes the $L^2$-orthogonal projection onto 
$\P^{k+1}(T;\R)$. (Notice that the operator $I^{k+1}_{T^i}$ depends on the Sobolev 
index $s$, but this dependency is not tracked to simplify the notation; notice also
that the extension operator is not needed if $T$ is an uncut cell.)
Let us set\rev{, for all $(T,i)\in\pmeshOK$,} 
\begin{subequations} \begin{align}
\epsilon_{T,i}(v_i) := {} & \sum_{S\in\{T\}\cup\N_i^{-1}(T)} \!\! \Big\{ \|v_i-I_{T^i}^{k+1}(v_i)^+\|_{S^i} 
+ h_S^{\frac12}\|v_i-I_{T^i}^{k+1}(v_i)^+\|_{\dSi}  \nonumber \\
& \hspace{1.75cm} + h_S \|\nabla(v_i-I_{T^i}^{k+1}(v_i)^+)\|_{S^i}
+ h_S^{\frac32} \|\nabla(v_i-I_{T^i}^{k+1}(v_i)^+)\|_{\dSi} \Big\}, \label{eq:def_epsilon} \\
\rev{\epsilon^\Gamma_{T,i}(v)} := {}& \rev{\sum_{S\in\{T\}\cup\N_i^{-1}(T)}  
h_S^{\frac12} \| \llbracket v-I_T^{k+1}(v)^+ \rrbracket_{\Gamma}\|_{\SG} }, \label{eq:def_epsilon_G}
\end{align} \end{subequations} 
\rev{where $\llbracket v-I_T^{k+1}(v)^+ \rrbracket_{\Gamma}|_{\SG}:=(v_1-I_{T^1}^{k+1}(v_1))|_{\SG}-(v_2-I_{T^2}^{k+1}(v_2))|_{\SG}$.}

\begin{lemma}[Approximation] \label{lem:approx.IkT}
For all $v\in H^s(\Omega_1\cup \Omega_2)$ with $s\in(\frac32,k+2]$ 
and all $(T,i)\in\pmeshOK$, we have
\begin{subequations} \begin{align} 
\epsilon_{T,i}(v_i) &\lesssim h_T^s |E_i^s(v_i)|_{H^s(\Delta(T))}, \label{eq:approx.i} \\
\rev{\epsilon^\Gamma_{T,i}(v)}
&\lesssim h_T^{s} \sum_{i\in\{1,2\}} |E_i^s(v_i)|_{H^s(\Delta_2(T))}. \label{eq:approx.jump}
\end{align} \end{subequations}
\end{lemma}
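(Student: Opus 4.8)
My plan is to reduce both bounds to the standard approximation theory of the $L^2$-orthogonal projection, the only genuinely new features being that the error is measured on the enlarged region $T^i\cup\bigcup_{S\in\N_i^{-1}(T)}S^i\subset\Delta(T)$ and that the projected polynomial lives on the full (uncut) cell $T$. Fix $(T,i)\in\pmeshOK$ and write $w:=E_i^s(v_i)\in H^s(\R^d)$ and $p:=\Pi^{k+1}_T(w)\in\P^{k+1}(T;\R)$; then $I_{T^i}^{k+1}(v_i)^+$ is exactly the canonical polynomial extension $p^+$, and $w=v_i$ on every $S^i\subset\Omega_i$. I would next introduce a single Bramble--Hilbert/Dupont--Scott polynomial $q\in\P^{k+1}$ of $w$ over the neighbourhood $\Delta(T)$: since $\Delta(T)$ is a finite union of shape-regular cells of size $\sim h_T$ whose geometry is controlled through the assumption $\conv(T)\subset\Delta(T)$ in \eqref{eq:conv_delta}, it has bounded chunkiness, and hence $|w-q|_{H^m(\Delta(T))}\lesssim h_T^{s-m}|w|_{H^s(\Delta(T))}$ for $0\le m\le\lceil s\rceil$. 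The whole argument then rests on the splitting $w-p^+=(w-q)+(q-p)^+$ into a smooth part and a polynomial of degree at most $k+1$.

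For the smooth part $w-q$, each contribution to $\epsilon_{T,i}(v_i)$ is immediate: the volume terms $\|w-q\|_{S^i}$ and $h_S\|\nabla(w-q)\|_{S^i}$ follow from the Bramble--Hilbert estimate on $\Delta(T)$ (using $S^i\subset S\subset\Delta(T)$ and $h_S\lesssim h_T$), while the face terms $h_S^{1/2}\|w-q\|_{\dSi}$ and $h_S^{3/2}\|\nabla(w-q)\|_{\dSi}$ come from a scaled trace inequality on the full cell $S$ (recall $\dSi\subset\partial S$) combined again with Bramble--Hilbert; it is here that the hypothesis $s>\tfrac32$ is used, so that $\nabla(w-q)\in H^{s-1}(S)$ with $s-1>\tfrac12$ possesses a trace. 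For the polynomial part $(q-p)^+$, I would invoke Lemma~\ref{lem:disc_trace_app}: inequality \eqref{eq:disc_trace_OK} applied to $\phi=(q-p)|_{T^i}$ transfers the value terms $\|(q-p)^+\|_{S^i}$ and $h_S^{1/2}\|(q-p)^+\|_{\dSi}$ to $\|q-p\|_{T^i}$, and applied componentwise to $\nabla(q-p)\in\P^{k}(T^i;\R^d)$ (extension commuting with differentiation) it transfers the gradient terms to $\|\nabla(q-p)\|_{T^i}$. Finally, $\|q-p\|_{T^i}\le\|q-p\|_{T}\le 2\|w-q\|_{T}\lesssim h_T^s|w|_{H^s(\Delta(T))}$ by $L^2(T)$-optimality of $\Pi^{k+1}_T$, and $\|\nabla(q-p)\|_{T^i}\lesssim h_T^{-1}\|q-p\|_T\lesssim h_T^{s-1}|w|_{H^s(\Delta(T))}$ by the inverse inequality on the uncut cell $T$; since all powers of $h_S\sim h_T$ then balance, this proves \eqref{eq:approx.i}.

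The jump estimate \eqref{eq:approx.jump} uses the same smooth/polynomial splitting, now applied on each side $j\in\{1,2\}$ to $v_j-p_j^+$ with $p_j:=\Pi^{k+1}_T(E_j^s(v_j))$, but evaluated on the curved interface piece $\SG=\Gamma\cap S$. The smooth contributions $w_j-q_j$ I would again control by a trace inequality onto $\SG$, now onto the $C^2$ interface, valid for $h$ small in the spirit of \cite{BE18}. For the polynomial contributions the essential asymmetry is that $S\in\N_i^{-1}(T)$ is paired to $T$ only on side $i$: on that side \eqref{eq:disc_trace_OK} already controls the $\SG$-trace and keeps the seminorm on $\Delta(T)$, whereas on the other side $\ibar$ the cell $S$ need not be paired to $T$, so no discrete trace inequality is available from the possibly ill-cut sub-cell $T^{\ibar}$, and the trace on $\SG$ must instead be taken robustly through the full neighbouring cell $S$.

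The hard part will be precisely this robust trace onto the arbitrarily cut interface. To remain uniform over all cut configurations, the underlying interface trace inequality has to be established on a one-cell neighbourhood $\Delta(S)$ of $S$ rather than on $S$ alone; since $S\in\Delta(T)$ one has $\Delta(S)\subset\Delta_2(T)$, which is exactly what widens the seminorm on the right-hand side of \eqref{eq:approx.jump} to the two-layer neighbourhood $\Delta_2(T)$. Proving this interface trace inequality uniformly in the cut geometry, together with the polynomial inverse inequalities of Lemma~\ref{lem:disc_trace_app}, is the delicate technical core and is deferred to Section~\ref{sec:proofs}.
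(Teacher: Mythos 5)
Your treatment of~\eqref{eq:approx.i} is essentially the paper's argument in different clothing: the paper also works with $\tilde v_i:=E_i^s(v_i)$, builds an averaged-Taylor/Morrey polynomial $q_m(v_i)$ on the patch $\Delta(T)$ (anchored at the ball $\ball(T,i)\subset T^i$, with the patch Poincar\'e inequality proved by comparing mean values through traces on $\partial\ball(T,i)$ rather than by invoking ``bounded chunkiness'' of $\Delta(T)$), splits off the polynomial part via $I_{T^i}^{k+1}(q_0(v_i))=q_0(v_i)$, and controls the extended polynomial with~\eqref{eq:disc_trace_OK} and the $L^2$-optimality of $\Pi_T^{k+1}$ on the full cell $T$ --- exactly your $(w-q)+(q-p)^+$ decomposition. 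The only cosmetic differences are that the paper uses $q_1$ directly for the gradient term where you use an inverse inequality on $T$, and that it proves the patch approximation property rather than asserting it; both routes are fine.

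For~\eqref{eq:approx.jump}, however, there is a genuine gap. You correctly identify that the trace onto the cut piece $\SG$ is the crux and that passing through a neighbourhood of $S$ is what produces $\Delta_2(T)$, but you then defer ``the delicate technical core'' to Section~\ref{sec:proofs} --- which is precisely the section whose content you are supposed to be supplying, so the interface trace inequality is never actually obtained. The paper's resolution is concrete: it applies the multiplicative trace inequality of \cite[Lemma~3.3]{BE18} to the \emph{whole} error $v_j-I_{T^j}^{k+1}(v_j)^+$ (no smooth/polynomial splitting is needed at this stage, since the volume estimates from the first part are already in hand), choosing the apex of the cone $C(\SG)$ to be the centre of $\ball(T,i)$ when $S=T$ and of $\ball(S,\ibar)$ when $S\in\N_i^{-1}(T)$ (legitimate because $(S,i)\in\pmeshKO$ forces $(S,\ibar)\in\pmeshOK$); the assumption $\conv(T)\subset\Delta(T)$ from~\eqref{eq:conv_delta} then guarantees $C(\TG)\subset\Delta(T)$, resp.\ $C(\SG)\subset\conv(S)\subset\Delta(S)\subset\Delta_2(T)$, so the cone stays inside the region where the volume estimates hold. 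This also dissolves the ``asymmetry'' you describe between the sides $i$ and $\ibar$: the same cone serves both traces, no discrete trace inequality from $T^{\ibar}$ is needed, and your proposed detour ``robustly through the full neighbouring cell $S$'' for the side-$\ibar$ polynomial part is both unnecessary and, as stated, not backed by any inequality you actually have at your disposal.
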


\subsection{Stability and well-posedness}
\label{Stability}

We introduce the following norm on $\Uhatz$: For all $\vhat\in\Uhatz$,
\begin{equation} \label{eq:stab.norm}
\|\vhat\|_{h0}^2 := \sum_{(T,i)\in\pmesh} 
\kappa_i\|\nabla \vTi\|_{T^i}^2 + |\vhat|_{\textsc{s}}^2 + s_h^\Gamma(\vhat,\vhat)
+ s_h^\N(\vhat,\vhat),
\end{equation}
where 
\begin{equation}
|\vhat|_{\textsc{s}}^2 := \sum_{(T,i)\in\pmesh} \kappa_ih_T^{-1}\|\vdTi - \vTi \|_{\dTi}^2,
\end{equation}
and the stabilization bilinear forms are defined in Section~\ref{sec:stab}. 
We observe that in general $|\vhat|_{\textsc{s}}^2 \ne s_h^\circ(\vhat,\vhat)$ since the 
cell component is projected when evaluating the latter. Furthermore, 
we notice the following rewriting:
\begin{align}
\|\vhat\|_{h0}^2 = {}& \sum_{(T,i)\in\pmesh} 
\kappa_i\Big\{ \|\nabla \vTi\|_{T^i}^2 + h_T^{-1}\|
\vdTi - \vTi \|_{\dTi}^2 + \delta_{i1} h_T^{-1} \|\jumpvT_\Gamma\|_{\TG}^2\Big\} \nonumber \\
& + \sum_{(T,i)\in\pmeshOK} \sum_{S\in \N_i^{-1}(T)} \eta_{\N}\kappa_i h_{T}^{-2} \|\vSi-\vTi^+\|_{T^i}^2.
\end{align}
It is straightforward to verify that~\eqref{eq:stab.norm} defines a norm on $\Uhatz$. Indeed,
if $\vhat\in\Uhatz$ satisfies $\|\vhat\|_{h0}=0$, then all the cell unknowns and all
the face unknowns are constant and take the same value inside each sub-domain $\Omega_i$ 
and globally in the domain $\Omega$. Since the face unknowns on the boundary $\partial\Omega$
vanish, we conclude that all the components of $\vhat$ are zero.

\begin{lemma}[Stability and boundedness]
\label{stabilitylemma}
We have, for all $\vhat \in \Uhatz$,
\begin{equation} \label{eq:stab.bnd}
\|\vhat\|_{h0}^2 \lesssim a_h(\vhat,\vhat) \lesssim \|\vhat\|_{h0}^2.
\end{equation}
\end{lemma}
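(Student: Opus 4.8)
The goal is the two-sided bound $\|\vhat\|_{h0}^2 \lesssim a_h(\vhat,\vhat) \lesssim \|\vhat\|_{h0}^2$, where $a_h = b_h + s_h$ and $s_h = s_h^\circ + s_h^\Gamma + s_h^\N$. The plan is to prove the upper bound (boundedness) by a direct term-by-term estimate, and the lower bound (coercivity/stability) by showing that the reconstructed gradients $\GTi(\vThatplus)$ together with the stabilization terms control the full norm $\|\vhat\|_{h0}^2$.

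Let me describe each direction. For the \emph{upper bound} $a_h(\vhat,\vhat) \lesssim \|\vhat\|_{h0}^2$, I would expand $b_h(\vhat,\vhat) = \sum_{(T,i)\in\pmesh}\kappa_i\|\GTi(\vThatplus)\|_{T^i}^2$ and bound each $\|\GTi(\vThatplus)\|_{T^i}$ using the defining formula~\eqref{def_grad_pmeshOK}. Testing with $\q = \GTi(\vThatplus)$, Cauchy--Schwarz on each boundary term, and the key discrete inverse inequalities of Lemma~\ref{lem:disc_trace_app} (which convert the face and interface norms $\|\q^+\|_{\dSi\cup\SG}$ back into the volume norm $\|\q\|_{T^i}$ at the cost of factors $h_S^{-1/2}$) yield $\kappa_i\|\GTi(\vThatplus)\|_{T^i}^2 \lesssim$ a sum of the gradient energy $\kappa_i\|\nabla\vTi\|_{T^i}^2$, the face-mismatch terms $\kappa_i h_T^{-1}\|\vdSi-\vSi\|_{\dSi}^2$, and the interface jump terms $\kappa_1 h_T^{-1}\|\jumpvS_\Gamma\|_{\SG}^2$ summed over $S\in\{T\}\cup\N_i^{-1}(T)$. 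Each of these is one of the terms appearing in the rewriting of $\|\vhat\|_{h0}^2$, and since the pairing operator has bounded overlap (each uncut/well-cut cell receives only $O(1)$ ill-cut cells, controlled by $\vartheta$ and shape-regularity), summing over all $(T,i)$ produces $\lesssim \|\vhat\|_{h0}^2$. The stabilization $s_h(\vhat,\vhat)$ is even easier: $s_h^\Gamma$ and $s_h^\N$ appear verbatim in $\|\vhat\|_{h0}^2$, and $s_h^\circ(\vhat,\vhat) \lesssim |\vhat|_{\textsc{s}}^2$ follows from $L^2$-stability of $\projT$.

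For the \emph{lower bound} $\|\vhat\|_{h0}^2 \lesssim a_h(\vhat,\vhat)$, the three stabilization pieces $s_h^\Gamma$ and $s_h^\N$ sit directly inside $a_h$, so it suffices to control the gradient energy $\sum\kappa_i\|\nabla\vTi\|_{T^i}^2$ and the face-seminorm $|\vhat|_{\textsc{s}}^2$ by $a_h(\vhat,\vhat)$. The standard HHO argument is to choose the test polynomial $\q = \nabla\vTi \in \P^k(T^i;\R^d)$ in~\eqref{def_grad_pmeshOK}; after rearranging, $\|\nabla\vTi\|_{T^i}^2$ equals $(\GTi(\vThatplus),\nabla\vTi)_{T^i}$ minus boundary mismatch terms, and applying the discrete Poincaré-type inequality~\eqref{eq:disc.poinc} together with Young's inequality lets one absorb the mismatch terms into $\|\GTi\|^2$ plus $|\vhat|_{\textsc{s}}^2$. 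To recover $|\vhat|_{\textsc{s}}^2$ itself, one bounds $h_T^{-1/2}\|\vdSi-\vSi\|_{\dSi}$ by inserting $\projS(\vSi)$: the projected part $h_T^{-1/2}\|\projS(\vSi)-\vdSi\|_{\dSi}$ is exactly $s_h^\circ$, while the unprojected remainder $h_T^{-1/2}\|(I-\projS)(\vSi)\|_{\dSi}$ is controlled by $\|\nabla\vSi\|$ via~\eqref{eq:disc.poinc}. A crucial subtlety is the ill-cut cells: for $(T,i)\in\pmeshKO$ the reconstruction is merely $\GTi=\nabla\vTi$, so the gradient energy on the missing side is directly controlled, but the \emph{link} between the ill-cut cell unknown $\vSi$ and its paired well-cut reconstruction is provided precisely by $s_h^\N$, which supplies the term $h_T^{-2}\|\vSi-\vTi^+\|_{T^i}^2$.

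The main obstacle I anticipate is the bookkeeping around the \emph{extended stencil and the pairing operator}: unlike the classical HHO coercivity proof, the gradient reconstruction on a well-cut cell $T$ absorbs the face and jump data of all ill-cut neighbors $S\in\N_i^{-1}(T)$, so I must verify that testing with $\q=\nabla\vTi$ controls only the gradient on $T^i$ itself, and that the contributions of the ill-cut sub-cells $S^i$ are recovered separately through the combination of $s_h^\N$ (tying $\vSi$ to the extension $\vTi^+$) and the $\pmeshKO$ gradient term $\|\nabla\vSi\|_{S^i}^2$. Establishing this requires a triangle inequality of the form $\|\nabla\vSi\|_{S^i} \lesssim \|\nabla(\vTi^+)\|_{S^i} + \|\nabla(\vSi-\vTi^+)\|_{S^i}$, where the first term is controlled by the discrete inverse inequality~\eqref{eq:disc_trace_OK} applied to the polynomial $\nabla\vTi$ extended over $\Delta(T)$, and the second by an inverse inequality scaling $\|\nabla(\vSi-\vTi^+)\|_{S^i}\lesssim h_T^{-1}\|\vSi-\vTi^+\|_{S^i}$ feeding directly into $s_h^\N$. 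Keeping the powers of $h_T$ consistent across the extension (using $h_S \simeq h_T$ for $S\in\Delta(T)$ by shape-regularity) and ensuring the finite-overlap property of the pairing keeps all summations comparable is the technical heart of the argument.
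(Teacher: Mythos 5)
Your overall architecture matches the paper's proof: for the upper bound you expand $b_h$ and use the discrete trace inequality \eqref{eq:disc_trace_OK} to fold the face and interface terms back into $\|\vhat\|_{h0}^2$ (correct, and the paper does exactly this), and for the lower bound you test \eqref{def_grad_pmeshOK} with $\q=\nabla\vTi$ and then recover $|\vhat|_{\textsc{s}}^2$ by inserting $\projS$ and splitting into a part controlled by $s_h^\circ$ and a remainder. The gap is in how you treat that remainder on the ill-cut cells $S\in\N_i^{-1}(T)$. You claim that $h_S^{-1/2}\|(I-\projS)(\vSi)\|_{\dSi}$ is ``controlled by $\|\nabla\vSi\|$ via \eqref{eq:disc.poinc}.'' But \eqref{eq:disc.poinc} is stated only for $\phi\in\P^\ell(T^i;\R)$ with $(T,i)\in\pmeshOK$ and bounds $\|(I-\projS)(\phi^+)\|_{\dSi}$ by $\|\nabla\phi\|_{T^i}$, i.e.\ it applies to the \emph{extension from the well-cut sub-cell}, not to the ill-cut cell's own polynomial $\vSi$. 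A direct discrete Poincar\'e (or inverse) inequality on the sliver $S^i$ has a constant that degenerates as the cut degenerates --- this is precisely the pathology the whole polynomial-extension construction is designed to circumvent, so this step cannot be repaired by ``standard'' local arguments. The paper's fix is to insert $\vTi^+$: write $\|(I-\projS)(\vSi)\|_{\dSi}\le \|(I-\projS)(\vSi-\vTi^+)\|_{\dSi}+\|(I-\projS)(\vTi^+)\|_{\dSi}$, bound the second term legitimately by \eqref{eq:disc.poinc} (applied to $\phi=\vTi$), and bound the first by $\|\vSi-\vTi^+\|_{\dSi}\lesssim h_S^{-1/2}\|\vSi-\vTi^+\|_{T^i}$ using \eqref{eq:disc_trace_OK} relative to the \emph{well-cut} sub-cell $T^i$, which is exactly the quantity penalized by $s_h^{\N}$ (note that the inner product in \eqref{eq:def_shN} is taken over $T^i$, not $S^i$).

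Your final paragraph shows you sensed that $s_h^{\N}$ must supply the link between $\vSi$ and $\vTi^+$, but you deploy it on the wrong quantity and with norms on the wrong sub-cell. The triangle inequality $\|\nabla\vSi\|_{S^i}\lesssim\|\nabla(\vTi^+)\|_{S^i}+\|\nabla(\vSi-\vTi^+)\|_{S^i}$ is not needed at all: for $(S,i)\in\pmeshKO$ the definition \eqref{def_grad_pmeshKO} gives $\GSi(\vShatplus)=\nabla\vSi$, so $\kappa_i\|\nabla\vSi\|_{S^i}^2$ is verbatim a term of $b_h(\vhat,\vhat)$ and requires no estimate. Moreover, the two bounds you propose for its pieces --- \eqref{eq:disc_trace_OK} applied ``to the polynomial $\nabla\vTi$ extended over $\Delta(T)$'' targeting a norm on $S^i$ is fine, but the inverse inequality $\|\nabla(\vSi-\vTi^+)\|_{S^i}\lesssim h_T^{-1}\|\vSi-\vTi^+\|_{S^i}$ on the degenerate sub-cell $S^i$ is not uniform in the cut, and $\|\cdot\|_{S^i}$ does not ``feed directly into $s_h^{\N}$,'' whose $L^2$ product is over $T^i$. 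The quantity that actually needs the $s_h^{\N}$/extension mechanism is the face mismatch $h_S^{-1}\|\vdSi-\vSi\|_{\dSi}^2$ on ill-cut cells, as described above.
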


\begin{proof}
(1) Bounds on reconstructed gradient. Let $(T,i)\in\pmeshOK$. Then, taking 
$\q:= \nabla \vTi \in \P^k(T^i;\R^d)$ in~\eqref{def_grad_pmeshOK}, we obtain
\begin{align*}
\|\nabla \vTi\|_{T^i}^2 = {}& (\GTi(\vThatplus),\nabla \vTi)_{T^i} - (\vdTi-\vTi, \nabla \vTi \SCAL \nT)_{\dTi} + \delta_{i1}(\jumpvT_{\Gamma}, \nabla \vTi \SCAL \nG)_{T^{\Gamma}} \\
&  - \sumexi \Big\{ (\vdSi - \vSi, (\nabla \vTi)^+ \SCAL \nS)_{\dSi} - \delta_{i1} 
(\jumpvS_{\Gamma}, (\nabla \vTi)^+ \SCAL \nG)_{S^{\Gamma}}\Big\}.
\end{align*}
Since $\nabla \vTi \SCAL \nT \in \P^k(\localfacesi;\R)$ and 
$(\nabla \vTi)^+ \SCAL \nS \in \P^k(\extendedfacesi;\R)$, we infer that
\begin{align*}
\|\nabla \vTi\|_{T^i}^2 = {}& (\GTi(\vThatplus),\nabla \vTi)_{T^i} - (\vdTi-\projT(\vTi), \nabla \vTi \SCAL \nT)_{\dTi} + \delta_{i1}(\jumpvT_{\Gamma}, \nabla \vTi \SCAL \nG)_{T^{\Gamma}} \\
&  - \sumexi \Big\{ (\vdSi - \projS(\vSi), (\nabla \vTi)^+ \SCAL \nS)_{\dSi} - \delta_{i1} 
(\jumpvS_{\Gamma}, (\nabla \vTi)^+ \SCAL \nG)_{S^{\Gamma}}\Big\}.
\end{align*}
Invoking the Cauchy--Schwarz inequality and the discrete trace 
inequality~\eqref{eq:disc_trace_OK}, we obtain
\[
\|\nabla \vTi\|_{T^i}^2 \lesssim \|\GTi(\vThatplus)\|_{T^i}^2 + 
\sum_{S\in\{T\}\cup \N_i^{-1}(T)} \Big\{ h_S^{-1}
\|\vdSi-\projS(\vSi)\|_{\dSi}^2 + \delta_{i1}h_S^{-1}\|\jumpvS_{\Gamma}\|_{\SG}^2\Big\}. 
\]
Proceeding similarly proves that
\[
\|\GTi(\vThatplus)\|_{T^i}^2 \lesssim \|\nabla \vTi\|_{T^i}^2 + 
\sum_{S\in\{T\}\cup \N_i^{-1}(T)} \Big\{ h_S^{-1}
\|\vdSi-\projS(\vSi)\|_{\dSi}^2 + \delta_{i1}h_S^{-1}\|\jumpvS_{\Gamma}\|_{\SG}^2\Big\}. 
\]
Moreover, for all $(T,i)\in\pmeshKO$, we have $\GTi(\vThatplus) = \nabla \vTi$ in $T^i$. 
Therefore, multiplying by $\kappa_i$ and summing over all $(T,i)\in\pmesh$, we infer that
\begin{subequations} \begin{align}
\sum_{(T,i)\in\pmesh} \kappa_i \|\nabla \vTi\|_{T^i}^2 & \lesssim b_h(\vhat,\vhat) + s_h^\circ(\vhat,\vhat) + s_h^\Gamma(\vhat,\vhat), \label{eq:stab.1} \\
b_h(\vhat,\vhat) &\lesssim \sum_{(T,i)\in\pmesh} \kappa_i \|\nabla \vTi\|_{T^i}^2 + s_h^\circ(\vhat,\vhat) + s_h^\Gamma(\vhat,\vhat). \label{eq:stab.2}
\end{align} \end{subequations}
Since $s_h^\circ(\vhat,\vhat) \le |\vhat|_{\textsc{s}}^2$, the upper bound in~\eqref{eq:stab.bnd} readily follows from~\eqref{eq:stab.2}. 

(2) To prove the lower bound in~\eqref{eq:stab.bnd}, it remains to estimate $|\vhat|_{\textsc{s}}^2$. We first observe that
\[
|\vhat|_{\textsc{s}}^2 = \sum_{(T,i)\in\pmeshOK} \kappa_i \sum_{S\in\{T\}\cup \N_i^{-1}(T)} 
h_S^{-1} \|\vdSi-\vSi\|_{\dSi}^2.
\]
Consider first the case $S=T$. The triangle inequality gives
\[
\|\vdTi-\vTi\|_{\dTi} \le \|\vdTi-\projT(\vTi)\|_{\dTi} + \|(I-\projT)(\vTi)\|_{\dTi},
\]
and owing to~\eqref{eq:disc.poinc}, we infer that
\begin{equation} \label{eq:bnd_on_Ti}
\|\vdTi-\vTi\|_{\dTi} \lesssim \|\vdTi-\projT(\vTi)\|_{\dTi} + h_T^{\frac12} \|\nabla v_{T^i}\|_{T^i}.
\end{equation}
Let now $S\in \N_i^{-1}(T)$. The triangle inequality implies that
\begin{align*}
\|\vdSi-\vSi\|_{\dSi} \le {}& \|\vdSi-\projS(\vSi)\|_{\dSi}
+ \|(I-\projS)(\vSi-\vTi^+)\|_{\dSi} \\
& + \|(I-\projS)(\vTi^+)\|_{\dSi}.
\end{align*}
For the second term on the right-hand side, we simply
notice that $\|(I-\projS)(\vSi-\vTi^+)\|_{\dSi}\le \|\vSi-\vTi^+\|_{\dSi}$, whereas we invoke~\eqref{eq:disc.poinc} to bound the third term. Altogether, this gives
\[
\|\vdSi-\vSi\|_{\dSi} \lesssim \|\vdSi-\projS(\vSi)\|_{\dSi} + \|\vSi-\vTi^+\|_{\dSi}
+ h_S^{\frac12} \|\nabla \vTi\|_{T^i}.
\]
\rev{The first term on the right-hand side is bounded using the $L^2$-stability of $\projS$ and~\eqref{eq:bnd_on_Ti}. The second term is controlled by means of the discrete trace inequality~\eqref{eq:disc_trace_OK} and the definition~\eqref{eq:def_shN} of the stabilization bilinear from $s_h{^\N}$.} Squaring the resulting inequality, multiplying by $\kappa_ih_S^{-1}$, summing over $S\in \N_i^{-1}(T)$, and finally summing over $(T,i)\in\pmeshOK$, we obtain
\[
|\vhat|_{\textsc{s}}^2 \lesssim s_h^\circ(\vhat,\vhat) + s_h^\N(\vhat,\vhat) + \sum_{(T,i)\in\pmesh} \kappa_i \|\nabla \vTi\|_{T^i}^2.
\]
Combining this bound with~\eqref{eq:stab.1} gives
\[
\sum_{(T,i)\in\pmesh} \kappa_i \|\nabla \vTi\|_{T^i}^2 + |\vhat|_{\textsc{s}}^2  
\lesssim b_h(\vhat,\vhat) + s_h^\circ(\vhat,\vhat) + s_h^\Gamma(\vhat,\vhat) + s_h^\N(\vhat,\vhat) = a_h(\vhat,\vhat),
\] 
whence the lower bound in~\eqref{eq:stab.bnd} readily follows.
\end{proof}

\subsection{Interpolation operator}

For all $T\in\mesh$, the local interpolation operator is defined as follows:
\begin{equation}
\hITk(v) := (\hITko(v),\hITkt(v)) := (I^{k+1}_{T^1}(v_1),\Pi_{\dTone}^k(v_1),I^{k+1}_{T^2}(v_2),\Pi_{\dTtwo}^k(v_2)) \in \UThat,
\end{equation}
where $I^{k+1}_{T^i}(v_i)$ is defined in~\eqref{eq:def_IkT} for all $(T,i)\in\pmeshOK$,
whereas for all $(S,i)\in\pmeshKO$, we set
\begin{equation} \label{eq:def_IkT_KO}
I^{k+1}_{S^i}(v_i) := I^{k+1}_{T^i}(v_i)^+|_{S^i}, \qquad T:=\N_i(S),
\end{equation} 
where we recall that the superscript ${}^+$ is used to indicate the extension of a polynomial originally defined on $T^i$ to \rev{$\Delta(T)$}.

The global interpolation operator is denoted by 
$\hIhk:H^s(\Omega_1\cup\Omega_2)\rightarrow \Uhat$ and is such that the local components 
of $\hIhk(v)$ on a mesh cell $T\in\mesh$ are those given by $\hITk(v|_T)$.
Notice that $\hIhk(u) \in \Uhatz$ for the exact solution since $u|_{\partial\Omega}=0$.

\begin{lemma}[Approximation] \label{lem:approx.interp}
For all $v\in H^s(\Omega_1\cup \Omega_2)$, $s>\frac32$, and all $(T,i)\in\pmeshOK$, set
\begin{subequations}  \label{eq:def_bdelta} \begin{align}
\bdelta_{T,i}(v)&:= \GTi\big(\hITk(v)^{\N}\big) + \delta_{i1} \lift_{T^1}^k(\llbracket v \rrbracket_\Gamma) - \nabla v_i|_{T^i},  \\
\bdelta_{T,S,i}(v) &:= \big\{ \GTi\big(\hITk(v)^{\N}\big) + \delta_{i1} \lift_{T^1}^k(\llbracket v \rrbracket_\Gamma)\big\}^+|_{S^i} - \nabla v_i|_{S^i}, \qquad \forall S\in \N_i^{-1}(T),
\end{align} \end{subequations}
and recall the error measure\rev{s $\epsilon_{T,i}(v_i)$ and $\epsilon^\Gamma_{T,i}(v)$ defined in~\eqref{eq:def_epsilon}-\eqref{eq:def_epsilon_G}}.
Then, the following holds:
\begin{subequations}
\begin{align} 
\|\bdelta_{T,i}(v)\|_{T^i}  + h_T^{\frac12} \|\bdelta_{T,i}(v)\|_{\dTi\cup \TG}
&\lesssim h_T^{-1} \big\{ \epsilon_{T,i}(v_i) + \delta_{i1}\rev{\epsilon^\Gamma_{T,i}(v)} \big\}, \label{eq:bnd_bdelta} \\
\label{eq:bnd_bdelta_S}
\sumexi \Big\{ \|\bdelta_{T,S,i}(v)\|_{S^i} + h_S^{\frac12} \|\bdelta_{T,S,i}(v)\|_{\dSi\cup\SG}\Big\} 
&\lesssim h_T^{-1} \big\{ \epsilon_{T,i}(v_i) + \delta_{i1}\rev{\epsilon^\Gamma_{T,i}(v)} \big\}.
\end{align}
\end{subequations}
\end{lemma}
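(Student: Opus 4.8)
The plan is to reduce the estimate to a single tested identity for the reconstructed gradient on the interpolate, and then to split the error $\bdelta_{T,i}(v)$ into a polynomial part (controlled by the discrete inverse inequality~\eqref{eq:disc_trace_OK}) and a non-polynomial approximation part (controlled term-by-term by the pieces of $\epsilon_{T,i}(v_i)$ and $\epsilon^\Gamma_{T,i}(v)$). First I would fix $(T,i)\in\pmeshOK$, set $w_i:=I^{k+1}_{T^i}(v_i)$ and $\boldsymbol{\xi}_{T^i}:=\GTi(\hITk(v)^{\N})+\delta_{i1}\lift^k_{T^1}(\llbracket v\rrbracket_\Gamma)$, so that $\bdelta_{T,i}(v)=\boldsymbol{\xi}_{T^i}-\nabla v_i|_{T^i}$. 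Inserting $\hITk(v)^{\N}$ into the definition~\eqref{def_grad_pmeshOK} and recalling~\eqref{eq:def_lift}, the face components of $\hITk(v)$ are the projections $\Pi^k_{\dSi}(v_i)$, which may be replaced by $v_i$ when tested against $\q\SCAL\nS\in\P^k$; moreover the discrete interface jump combines with the lifting of $\llbracket v\rrbracket_\Gamma$ into the jump of $v-I_T^{k+1}(v)^+$, which is precisely what $\epsilon^\Gamma_{T,i}$ measures. This yields, for all $\q\in\P^k(T^i;\R^d)$,
\begin{align*}
(\boldsymbol{\xi}_{T^i},\q)_{T^i}={}&(\nabla w_i,\q)_{T^i}+(v_i-w_i,\q\SCAL\nT)_{\dTi}+\delta_{i1}(\llbracket v-I_T^{k+1}(v)^+\rrbracket_\Gamma,\q\SCAL\nG)_{\TG}\\
&+\sum_{S\in\N_i^{-1}(T)}\big\{(v_i-w_i^+,\q^+\SCAL\nS)_{\dSi}+\delta_{i1}(\llbracket v-I_T^{k+1}(v)^+\rrbracket_\Gamma,\q^+\SCAL\nG)_{\SG}\big\}.
\end{align*}
Subtracting $(\nabla v_i,\q)_{T^i}$ and writing $\bdelta_{T,i}(v)=(\boldsymbol{\xi}_{T^i}-\nabla w_i)-\nabla(v_i-w_i)$ isolates the polynomial remainder $\boldsymbol{\xi}_{T^i}-\nabla w_i\in\P^k(T^i;\R^d)$, since $\nabla w_i\in\P^k(T^i;\R^d)$.

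Next I would bound this polynomial remainder by testing the above identity against $\q:=\boldsymbol{\xi}_{T^i}-\nabla w_i$. The volume term $(\nabla w_i,\q)_{T^i}$ cancels, and Cauchy--Schwarz together with~\eqref{eq:disc_trace_OK} (which gives $\|\q^+\|_{\dSi\cup\SG}\lesssim h_S^{-\frac12}\|\q\|_{T^i}$) lead to
\begin{align*}
\|\boldsymbol{\xi}_{T^i}-\nabla w_i\|_{T^i}&\lesssim\sum_{S\in\{T\}\cup\N_i^{-1}(T)}h_S^{-\frac12}\big\{\|v_i-w_i^+\|_{\dSi}+\delta_{i1}\|\llbracket v-I_T^{k+1}(v)^+\rrbracket_\Gamma\|_{\SG}\big\}\\
&\lesssim h_T^{-1}\big\{\epsilon_{T,i}(v_i)+\delta_{i1}\epsilon^\Gamma_{T,i}(v)\big\},
\end{align*}
using $h_S\sim h_T$ and matching each summand with a term of $\epsilon_{T,i}$ or $\epsilon^\Gamma_{T,i}$. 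The volume bound in~\eqref{eq:bnd_bdelta} then follows by adding $\|\nabla(v_i-w_i)\|_{T^i}\lesssim h_T^{-1}\epsilon_{T,i}$. For the $\dTi$-trace, the polynomial part is again handled by~\eqref{eq:disc_trace_OK}, while the non-polynomial contribution $h_T^{\frac12}\|\nabla(v_i-w_i)\|_{\dTi}$ is exactly the $h_S^{\frac32}\|\nabla(v_i-w_i^+)\|_{\dSi}$ term of $\epsilon_{T,i}$ (with $S=T$). The extension estimate~\eqref{eq:bnd_bdelta_S} is obtained by the same split on each $S^i$: the extended polynomial $(\boldsymbol{\xi}_{T^i}-\nabla w_i)^+|_{S^i}$ has its $S^i$-, $\dSi$-, and $\SG$-norms controlled by $\|\boldsymbol{\xi}_{T^i}-\nabla w_i\|_{T^i}$ through~\eqref{eq:disc_trace_OK}, and the remaining $\nabla(v_i-w_i^+)$ on $S^i$ and $\dSi$ is covered by the $S$-indexed terms of $\epsilon_{T,i}$.

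The step I expect to be the main obstacle is the trace of the gradient approximation error on the \emph{curved} interface pieces, namely $h_S^{\frac12}\|\nabla(v_i-w_i^+)\|_{\SG}$ (and the case $S=T$ giving $h_T^{\frac12}\|\nabla(v_i-w_i)\|_{\TG}$), which is the only contribution not already present in $\epsilon_{T,i}$. Being the trace of a non-polynomial, it cannot be absorbed by an inverse inequality as in the flat-face case. The plan is to bound it by a scaled trace inequality on the cut sub-cell $T^i$, valid once $h$ is small relative to the curvature of the $C^2$ interface; for $v_i\in H^s$ with $s>\frac32$ this must be taken in a fractional form so as to avoid requiring second derivatives, after which the approximation result of Lemma~\ref{lem:approx.IkT} is invoked to identify the outcome with the right-hand side $h_T^{-1}\epsilon_{T,i}(v_i)$. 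This curved-trace control of the gradient error is the delicate point; once it is in place, every other term reduces to Cauchy--Schwarz, the discrete inverse inequality of Lemma~\ref{lem:disc_trace_app}, and a direct matching with the components of $\epsilon_{T,i}$ and $\epsilon^\Gamma_{T,i}$.
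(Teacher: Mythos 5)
Your proposal is correct and follows essentially the same route as the paper: you isolate the polynomial remainder $\boldsymbol{\xi}_{T^i}-\nabla I^{k+1}_{T^i}(v_i)$ (the paper's $\bdelta'_{T,i}(v)$), test the reconstruction identity \eqref{def_grad_pmeshOK_lift} against it, apply Cauchy--Schwarz together with the discrete trace inequality \eqref{eq:disc_trace_OK}, and conclude by the triangle inequality, exactly as in the paper. The curved-interface term $h_T^{\frac12}\|\nabla(v_i-I^{k+1}_{T^i}(v_i)^+)\|_{\SG}$ that you flag as the main obstacle is indeed the one point the paper passes over silently (it is not literally a component of $\epsilon_{T,i}$ as written in \eqref{eq:def_epsilon}), and your proposed fix via a trace inequality on the cut sub-cell combined with Lemma~\ref{lem:approx.IkT} is the natural repair, consistent with how the paper's proof of that lemma handles $\TG$-traces.
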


\begin{proof}
Let $v\in H^s(\Omega_1\cup \Omega_2)$, $s>\frac32$, and 
let $(T,i)\in\pmeshOK$. Set 
\[
\bdelta'_{T,i}(v):= \GTi\big(\hITk(v)^{\N}\big) 
+ \delta_{i1} \lift_{T^1}^k(\llbracket v \rrbracket_\Gamma) - \nabla I_{T^i}^{k+1}(v_i)
\in \P^{k+1}(T^i;\R).
\] 

(1) Let $I_{\mesh}^{k+1}$ be composed of the two cell components of $\hIhk$.
We notice that
\begin{align*}
\|\bdelta'_{T,i}(v)\|_{T^i}^2 = {}& (\GTi\big(\hITk(v)^{\N}\big) 
+ \delta_{i1} \lift_{T^1}^k(\llbracket v \rrbracket_\Gamma) - \nabla I_{T^i}^{k+1}(v_i),
\bdelta'_{T,i}(v))_{T^i} \\ 
= {}& (\GTi\big(\hITk(v)^{\N}\big) 
+ \delta_{i1} \lift_{T^1}^k(\llbracket I_{\mesh}^{k+1}(v) \rrbracket_\Gamma) - \nabla I_{T^i}^{k+1}(v_i),
\bdelta'_{T,i}(v))_{T^i}  \\
& + 
\delta_{i1} (\lift_{T^1}^k(\llbracket v-I_{\mesh}^{k+1}(v) \rrbracket_\Gamma), \bdelta'_{T,i}(v))_{T^i}.
\end{align*}
Owing to~\eqref{def_grad_pmeshOK_lift} and the definition of the operators $\hITk$ and $\lift^k_{T^1}$, we infer that
\begin{align*}
\|\bdelta'_{T,i}(v)\|_{T^i}^2 = {}& (\projT(v_i)-I_{T^i}^{k+1}(v_i), \bdelta'_{T,i}(v) \SCAL \nT)_
{\dTi} + \delta_{i1}(\llbracket v-I_{T}^{k+1}(v) \rrbracket_{\Gamma}, \bdelta'_{T,1}(v) \SCAL \nG)_{T^{\Gamma}} \nonumber \\
&  + \sumexi \Big\{ (\projS(v_i) - I_{T^i}^{k+1}(v_i)^+, \bdelta'_{T,i}(v)^+ \SCAL \nS)_{\dSi} 
\\ & \qquad
+ \delta_{i1} (\llbracket v-I_{T}^{k+1}(v)^+ \rrbracket_{\Gamma}, 
\bdelta'_{T,1}(v)^+ \SCAL \nG)_{S^{\Gamma}}\Big\}.
\end{align*}
Invoking the definition of the $L^2$-orthogonal projections $\projT$ and $\projS$ gives
\begin{align*}
\|\bdelta'_{T,i}(v)\|_{T^i}^2 = {}&(v_i-I_{T^i}^{k+1}(v_i), \bdelta'_{T,i}(v) \SCAL \nT)_
{\dTi} + \delta_{i1}(\llbracket v-I_{T}^{k+1}(v) \rrbracket_{\Gamma}, \bdelta'_{T,1}(v) \SCAL \nG)_{T^{\Gamma}} \nonumber \\
&  + \sumexi \Big\{ (v_i - I_{T^i}^{k+1}(v_i)^+, \bdelta'_{T,i}(v)^+ \SCAL \nS)_{\dSi} 
\ifHAL \cuthereq \fi 
+ \delta_{i1} (\llbracket v-I_{T}^{k+1}(v)^+ \rrbracket_{\Gamma}, 
\bdelta'_{T,1}(v)^+ \SCAL \nG)_{S^{\Gamma}}\Big\}.
\end{align*}
Invoking the Cauchy--Schwarz inequality and the discrete trace inequality~\eqref{eq:disc_trace_OK}, we infer that
\begin{align*}
h_T^{\frac12} \|\bdelta'_{T,i}(v)\|_{T^i} \lesssim {}& \|v_i-I_{T^i}^{k+1}(v_i)\|_{\dTi}
+ \delta_{i1} \|\llbracket v-I_{T}^{k+1}(v) \rrbracket_{\Gamma}\|_{\TG} \\
&  + \sumexi \Big\{ \|v_i - I_{T^i}^{k+1}(v_i)^+\|_{\dSi} + \delta_{i1} \|\llbracket v-I_{T}^{k+1}(v)^+ \rrbracket_{\Gamma}\|_{\SG} \Big\}.
\end{align*}
Recalling the definition~\eqref{eq:def_epsilon} of $\epsilon_{T,i}(v)$ \rev{and the definition~\eqref{eq:def_epsilon_G} of $\epsilon^\Gamma_{T,i}(v)$} gives
\begin{equation} \label{eq:bnd_bdelta'}
\|\bdelta'_{T,i}(v)\|_{T^i} \lesssim h_T^{-1} \big\{ \epsilon_{T,i}(v_i) + \delta_{i1}\rev{\epsilon^\Gamma_{T,i}(v)} \big\},
\end{equation}
and owing to the discrete trace inequality~\eqref{eq:disc_trace_OK}, we obtain
\[
\|\bdelta'_{T,i}(v)\|_{T^i} + h_T^{\frac12} \|\bdelta'_{T,i}(v)\|_{\dTi\cup \TG}
\lesssim \|\bdelta'_{T,i}(v)\|_{T^i}
\lesssim h_T^{-1} \big\{ \epsilon_{T,i}(v_i) + \delta_{i1}\rev{\epsilon^\Gamma_{T,i}(v)} \big\}.
\]
Finally, since $\bdelta_{T,i}(v)=\bdelta'_{T,i}(v) - \nabla(v_i-I_{T^i}^{k+1}(v_i))$,
invoking the triangle inequality proves~\eqref{eq:bnd_bdelta}.

(2) Assuming that $\N_i^{-1}(T)$ is nonempty, for 
all $S\in \N_i^{-1}(T)$, we have
\[
\bdelta_{T,S,i}(v) = \bdelta'_{T,i}(v)^+|_{S^i} + \nabla (I_{T^i}^{k+1}(v_i)^+|_{S^i}
- v_i|_{S^i}).
\]
The discrete trace inequality~\eqref{eq:disc_trace_OK}
and \eqref{eq:bnd_bdelta'} imply that
\[
\|\bdelta'_{T,i}(v)^+\|_{S^i} + 
h_S^{\frac12} \|\bdelta'_{T,i}(v)^+\|_{\dSi\cup\SG} \lesssim \|\bdelta'_{T,i}(v)\|_{T^i}
\lesssim h_T^{-1} \big\{ \epsilon_{T,i}(v_i) + \delta_{i1}\rev{\epsilon^\Gamma_{T,i}(v)} \big\}.
\]
Invoking the triangle inequality and recalling~\eqref{eq:def_epsilon}
proves~\eqref{eq:bnd_bdelta_S}.
\end{proof}

\subsection{Consistency}

\begin{lemma}[Consistency] \label{lem:bnd.consist}
Let $\rev{u\upex}$ be the weak solution to~\eqref{eq:weak}. 
Assume that $\rev{u\upex}\in H^s(\Omega_1\cup \Omega_2)$, $s>\frac32$.
Set $\Phi_h(\what) := a_h(\hIhk(\rev{u\upex}),\what)-\ell_h(\what)$ for all
$\what \in \Uhatz$. The following holds:
\begin{align} \label{eq:bnd.consist}
|\Phi_h(\what)| \lesssim {}& 
\Bigg\{ \sum_{(T,i)\in\pmeshOK} \kappa_i h_T^{-2} 
\big\{ \epsilon_{T,i}(\rev{u\upex_i}) + \delta_{i1}\rev{\epsilon^\Gamma_{T,i}(u\upex)} \big\}^2 \Bigg\}^{\frac12} \|\what\|_{h0}.
\end{align}
\end{lemma}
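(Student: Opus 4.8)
The plan is to invoke the preparatory identity of Lemma~\ref{lem:preparatory} with the specific choice $\vhat := \hIhk(u\upex)$, and then to bound each term on the right-hand side of~\eqref{eq:identity_consist} by the Cauchy--Schwarz inequality, pairing the factors that carry the interpolation error with the approximation estimates of Lemma~\ref{lem:approx.interp} and the factors that carry $\what$ with the relevant pieces of the norm~\eqref{eq:stab.norm}. The key initial observation is that, for this choice of $\vhat$, the quantities $\bdelta_{T,i}$ and $\bdelta_{T,S,i}$ entering~\eqref{eq:identity_consist} coincide exactly with $\bdelta_{T,i}(u\upex)$ and $\bdelta_{T,S,i}(u\upex)$ from Lemma~\ref{lem:approx.interp} (since $\vThatplus = \hITk(u\upex)^{\N}$ and $\jumpu_\Gamma = \llbracket u\upex\rrbracket_\Gamma$). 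For the pairs $(S,i)\in\pmeshKO$, re-indexed through the enumeration~\eqref{eq:enum_PmeshKO} as $S\in\N_i^{-1}(T)$ with $(T,i)\in\pmeshOK$, the definition~\eqref{eq:def_IkT_KO} gives $\bdelta_{S,i} = \nabla(I_{T^i}^{k+1}(u\upex_i)^+-u\upex_i)|_{S^i}$, so that $\|\bdelta_{S,i}\|_{S^i}\lesssim h_T^{-1}\epsilon_{T,i}(u\upex_i)$ directly from the gradient contribution to $\epsilon_{T,i}$ in~\eqref{eq:def_epsilon}, using shape-regularity ($h_S\simeq h_T$).

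A second, crucial simplification is that $s_h^{\N}(\hIhk(u\upex),\what)=0$. Indeed, the definition~\eqref{eq:def_IkT_KO} of the cell interpolate on ill-cut cells is tailored precisely so that $v_{S^i}=v_{T^i}^+$ as polynomials on $\Delta(T)$ for every $S\in\N_i^{-1}(T)$, whence every summand in~\eqref{eq:def_shN} vanishes. This is the structural reason why the ghost-penalty-type stabilization $s_h^{\N}$ does not contribute to the consistency error. The remaining stabilization term $s_h^\circ(\hIhk(u\upex),\what)$ is handled by a single Cauchy--Schwarz step: on each sub-cell the factor $\projS(v_{S^i})-v_{\dSi}$ equals $\projS(I_{T^i}^{k+1}(u\upex_i)^+-u\upex_i)$ (since the face unknown is $\Pi^k_{\dSi}(u\upex_i)$ and the two projectors agree on faces), so the $L^2$-stability of $\projS$ together with the face contribution to $\epsilon_{T,i}$ gives $h_S^{-\frac12}\|\projS(v_{S^i})-v_{\dSi}\|_{\dSi}\lesssim h_T^{-1}\epsilon_{T,i}(u\upex_i)$; the $\what$-factor then assembles into $(s_h^\circ(\what,\what))^{\frac12}\le\|\what\|_{h0}$.

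For the genuinely bilinear terms of~\eqref{eq:identity_consist}, I would bound each pairing by Cauchy--Schwarz and match the $\bdelta$-factor to~\eqref{eq:bnd_bdelta}--\eqref{eq:bnd_bdelta_S} and the $\what$-factor to the norm: the volume terms $(\bdelta_{T,i},\nabla w_{T^i})_{T^i}$ (including the re-indexed $\pmeshKO$ contributions) pair with $\sum\kappa_i\|\nabla w_{T^i}\|_{T^i}^2$; the face terms on $\dTi$ and $\dSi$ pair with $|\what|_{\textsc{s}}^2$; and the interface terms on $\TG$ and $\SG$ pair with $s_h^\Gamma(\what,\what)$. In each case the half-power $h_T^{-\frac12}$ produced by the trace part of~\eqref{eq:bnd_bdelta} (or by the aggregated bound~\eqref{eq:bnd_bdelta_S}) combines with the explicit weights to reproduce the summand $\kappa_i h_T^{-2}\{\epsilon_{T,i}(u\upex_i)+\delta_{i1}\epsilon^\Gamma_{T,i}(u\upex)\}^2$. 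Finally, the last term $\sum_{T\in\cutmesh}\kappa_1 h_T^{-1}(\llbracket v_T-u\upex\rrbracket_\Gamma,\jumpwT_\Gamma)_{\TG}$ is bounded by Cauchy--Schwarz, using the $S=T$ contribution to $\epsilon^\Gamma_{T,1}(u\upex)$ for the first factor and $s_h^\Gamma(\what,\what)^{\frac12}\le\|\what\|_{h0}$ for the second.

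The only delicate point is the bookkeeping: the sums over $\pmeshKO$ arising both from the volume terms and from $s_h^\circ$ must be systematically re-indexed via~\eqref{eq:enum_PmeshKO} so that each ill-cut contribution is attributed to the well-cut cell $T=\N_i(S)$ whose interpolate defines it; only then do the aggregated estimates~\eqref{eq:bnd_bdelta_S} and the aggregated measure $\epsilon_{T,i}$ apply, with $h_S\simeq h_T$ (a consequence of shape-regularity and~\eqref{eq:conv_delta}) used to uniformize the mesh-size weights. After a final global Cauchy--Schwarz over $(T,i)\in\pmeshOK$, the $\what$-factors collect into $\|\what\|_{h0}$ and the data factors into the square-root sum on the right-hand side of~\eqref{eq:bnd.consist}, which concludes the proof.
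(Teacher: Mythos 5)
Your proposal is correct and follows essentially the same route as the paper's proof: apply the preparatory identity of Lemma~\ref{lem:preparatory} with $\vhat=\hIhk(u\upex)$, observe that $s_h^{\N}(\hIhk(u\upex),\cdot)=0$ by~\eqref{eq:def_IkT_KO}, re-index the $\pmeshKO$ contributions via~\eqref{eq:enum_PmeshKO}, and conclude by Cauchy--Schwarz combined with Lemma~\ref{lem:approx.interp} and the $L^2$-stability of the face projectors. All the key structural observations (vanishing of the ghost-penalty term, identification of the $\bdelta$-quantities with those of the approximation lemma, attribution of ill-cut contributions to their paired well-cut cells) match the paper's argument.
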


\begin{proof}
Owing to~\eqref{eq:identity_consist} from Lemma~\ref{lem:preparatory}, we have
$\Phi_h(\what) = A_1+A_2+A_3$ with
\begin{align*}
A_1 = {}& 
\sum_{(T,i)\in\pmesh} \kappa_i (\bdelta_{T,i}(\rev{u\upex}),\nabla \wTi)_{T^i},  \\
A_2 = {}& \sum_{(T,i)\in\pmeshOK} \kappa_i \bigg\{ (\bdelta_{T,i}(\rev{u\upex}) \SCAL \nT,\wdTi-\wTi)_{\dTi} 
- \delta_{i1}(\bdelta_{T,1}(\rev{u\upex}) \SCAL \nG,\jumpwT_{\Gamma})_{T^{\Gamma}}  \\
& + \sum_{S\in\N_i^{-1}(T)}  \big\{ 
(\bdelta_{T,S,i}(\rev{u\upex}) \SCAL \nS,\wdSi - \wSi)_{\dSi} - \delta_{i1} 
(\bdelta_{T,S,1}(\rev{u\upex}) \SCAL \nG,\jumpwS_{\Gamma})_{\SG}\big\}\bigg\},  \\
A_3 = {}& s_h^\circ(\hIhk(\rev{u\upex}),\what) + \sum_{T\in\cutmesh} \kappa_1h_T^{-1}
(\llbracket I_{T}^{k+1}(\rev{u\upex})-\rev{u\upex}\rrbracket_\Gamma,\jumpwT_\Gamma)_{\TG} + s_h^\N(\hIhk(\rev{u\upex}),\what),
\end{align*}
where $\bdelta_{T,i}(\rev{u\upex})$ and $\bdelta_{T,S,i}(\rev{u\upex})$ are defined in~\eqref{eq:def_bdelta}
for all $(T,i)\in\pmeshOK$, whereas we set $\bdelta_{T,i}(\rev{u\upex}):=\nabla (I^{k+1}_{T^i}(\rev{u\upex_i})-\rev{u\upex_i})$
for all $(T,i)\in\pmeshKO$, and where $\llbracket I_{T}^{k+1}(\rev{u\upex})-\rev{u\upex}\rrbracket_\Gamma$
is defined in Lemma~\ref{lem:approx.IkT}.
Owing to~\eqref{eq:def_IkT_KO}, we infer that the term $A_1$ can be rewritten as follows:
\[
A_1 = \sum_{(T,i)\in\pmeshOK} \kappa_i \bigg\{ (\bdelta_{T,i}(\rev{u\upex}),\nabla \wTi)_{T^i}
+ \sumexi (\bdelta_{T,S,i}(\rev{u\upex}),\nabla \wSi)_{S^i} \bigg\}.
\]
Invoking the Cauchy--Schwarz inequality together with the approximation results from Lemma~\ref{lem:approx.interp} gives
\begin{align*}
|A_1| &\lesssim \Bigg\{ \sum_{(T,i)\in\pmeshOK} \kappa_i h_T^{-2} \big\{
\epsilon_{T,i}(\rev{u\upex_i}) + \delta_{i1}\rev{\epsilon^\Gamma_{T,i}(u\upex)} \big\}^2 
\Bigg\}^{\frac12}
\Bigg\{ \sum_{(T,i)\in\pmesh} \kappa_i\|\nabla \wTi\|_{T^i}^2 \Bigg\}^{\frac12} \\
&\le \Bigg\{ \sum_{(T,i)\in\pmeshOK} \kappa_i h_T^{-2} \big\{
\epsilon_{T,i}(\rev{u\upex_i}) + \delta_{i1}\rev{\epsilon^\Gamma_{T,i}(u\upex)} \big\}^2 
\Bigg\}^{\frac12} \|\what\|_{h0},
\end{align*} 
where the last bound follows from the definition~\eqref{eq:stab.norm} of the 
stability norm $\|\SCAL\|_{h0}$.
The same arguments prove that
\[
|A_2| \lesssim \Bigg\{ \sum_{(T,i)\in\pmeshOK} \kappa_i h_T^{-2} 
\big\{ \epsilon_{T,i}(\rev{u\upex_i}) + \delta_{i1}\rev{\epsilon^\Gamma_{T,i}(u\upex)} \big\}^2 \Bigg\}^{\frac12} \|\what\|_{h0}.
\]
Finally, let us write $A_3=A_{31}+A_{32}+A_{33}$ with obvious notation. First, observing
that $s_h^\circ(\what,\what) \le |\what|_{\textsc{s}}^2$, we obtain
\begin{align*}
|A_{31}| &\le \Bigg\{ \sum_{(T,i)\in\pmesh} \kappa_i h_T^{-1} \|\projT(\rev{u\upex_i}-I_{T^i}^{k+1}(\rev{u\upex_i}))\|_{\dTi}^2 \Bigg\}^{\frac12} |\what|_{\textsc{s}} \\
&\le \Bigg\{ \sum_{(T,i)\in\pmesh} \kappa_i h_T^{-1} \|\rev{u\upex_i}-I_{T^i}^{k+1}(\rev{u\upex_i})\|_{\dTi}^2 \Bigg\}^{\frac12} |\what|_{\textsc{s}},
\end{align*}
where the second bound follows from the $L^2$-stability of $\projT$. Owing to~\eqref{eq:def_IkT_KO}, the summation on the right-hand side can be rewritten as
\[
|A_{31}| \le \Bigg\{ \sum_{(T,i)\in\pmeshOK} \sum_{S\in\{T\}\cup \N_i^{-1}(T)} \kappa_i h_S^{-1} \|\rev{u\upex_i}-I_{T^i}^{k+1}(\rev{u\upex_i})^+\|_{\dSi}^2 \Bigg\}^{\frac12} |\what|_{\textsc{s}}.
\]
Therefore, we conclude that
\[
|A_{31}| \le \Bigg\{ \sum_{(T,i)\in\pmeshOK} \kappa_i h_T^{-2} 
\epsilon_{T,i}(\rev{u\upex_i})^2 \Bigg\}^{\frac12} |\what|_{\textsc{s}}.
\]
Concerning $A_{32}$, the Cauchy--Schwarz inequality and the same arguments as above give
\[
|A_{32}| \le \Bigg\{ \sum_{(T,i)\in\pmeshOK} \kappa_i h_T^{-2} 
\big\{ \epsilon_{T,i}(\rev{u\upex_i}) + \delta_{i1}\rev{\epsilon^\Gamma_{T,i}(u\upex)} \big\}^2\Bigg\}^{\frac12} s_h^\Gamma(\what,\what)^{\frac12}.
\]
Finally, $s_h^\N(\hIhk(\rev{u\upex}),\what)=0$ owing to~\eqref{eq:def_IkT_KO}. Putting everything together proves the claim.
\end{proof}

\subsection{Error estimate}

We are now ready to establish the main result of our error analysis.

\begin{theorem}[Error estimate] \label{thm:error}
Let $\rev{u\upex}$ be the weak solution to~\eqref{eq:weak}. 
Assume that $\rev{u\upex}\in H^s(\Omega_1\cup \Omega_2)$ with $s\in(\frac32,k+2]$.
The following holds:
\begin{equation} \label{eq:err.est}
\Bigg\{ \sum_{(T,i)\in\pmesh} \kappa_i\|\nabla(\rev{u\upex_i}-u_{T^i})\|_{T^i}^2 \Bigg\}^{\frac12}
\lesssim \Bigg\{ \sum_{(T,i)\in\pmeshOK}
\kappa_i h_T^{2(s-1)} |E_i^s(\rev{u\upex_i})|_{H^s(\Delta_2(T))}^2 \Bigg\}^{\frac12}
\lesssim h^{s-1} \sum_{i\in\{1,2\}} \kappa_i^{\frac12} |\rev{u\upex_i}|_{H^s(\Omega_i)}.
\end{equation}
\end{theorem}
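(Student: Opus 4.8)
The plan is to combine the three pillars already established: the stability/boundedness estimate from Lemma~\ref{stabilitylemma}, the consistency estimate from Lemma~\ref{lem:bnd.consist}, and the approximation bounds from Lemma~\ref{lem:approx.IkT}. This is the classical Strang-type argument for consistent nonconforming methods. First I would set $\ehat := \hIhk(\rev{u\upex}) - \uhat \in \Uhatz$, the difference between the interpolate of the exact solution and the discrete solution. By the lower bound in~\eqref{eq:stab.bnd}, we have $\|\ehat\|_{h0}^2 \lesssim a_h(\ehat,\ehat)$. Since $\uhat$ solves the discrete problem~\eqref{problem}, $a_h(\uhat,\ehat)=\ell_h(\ehat)$, so
\[
a_h(\ehat,\ehat) = a_h(\hIhk(\rev{u\upex}),\ehat) - \ell_h(\ehat) = \Phi_h(\ehat),
\]
which is exactly the consistency functional evaluated at $\what=\ehat$.

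Next I would apply Lemma~\ref{lem:bnd.consist} to bound $|\Phi_h(\ehat)|$ by the consistency factor times $\|\ehat\|_{h0}$. Dividing through by $\|\ehat\|_{h0}$ (the case $\ehat=0$ being trivial) yields
\[
\|\ehat\|_{h0} \lesssim \Bigg\{ \sum_{(T,i)\in\pmeshOK} \kappa_i h_T^{-2} \big\{ \epsilon_{T,i}(\rev{u\upex_i}) + \delta_{i1}\rev{\epsilon^\Gamma_{T,i}(u\upex)} \big\}^2 \Bigg\}^{\frac12}.
\]
Then I would invoke the approximation estimates~\eqref{eq:approx.i}-\eqref{eq:approx.jump} from Lemma~\ref{lem:approx.IkT} to replace each $\epsilon_{T,i}(\rev{u\upex_i})$ by $h_T^s |E_i^s(\rev{u\upex_i})|_{H^s(\Delta(T))}$ and each $\epsilon^\Gamma_{T,i}(u\upex)$ by $h_T^s \sum_{i} |E_i^s(\rev{u\upex_i})|_{H^s(\Delta_2(T))}$. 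The factor $h_T^{-2}(h_T^s)^2 = h_T^{2(s-1)}$ then produces the intermediate bound in~\eqref{eq:err.est}. The final global bound follows from a finite-overlap argument: since the mesh is shape-regular and the neighborhoods $\Delta_2(T)$ have bounded overlap, the sum $\sum_T |E_i^s(\rev{u\upex_i})|_{H^s(\Delta_2(T))}^2$ is controlled by $|E_i^s(\rev{u\upex_i})|_{H^s(\R^d)}^2 \lesssim |\rev{u\upex_i}|_{H^s(\Omega_i)}^2$ using the stability of the extension operator $E_i^s$, together with $h_T \le h$.

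The remaining task is to pass from the control of $\|\ehat\|_{h0}$ to the control of the broken gradient error $\sum_{(T,i)\in\pmesh}\kappa_i\|\nabla(\rev{u\upex_i}-u_{T^i})\|_{T^i}^2$ appearing on the left of~\eqref{eq:err.est}. Here I would use a triangle inequality, $\|\nabla(\rev{u\upex_i}-u_{T^i})\|_{T^i} \le \|\nabla(\rev{u\upex_i}-I^{k+1}_{T^i}(\rev{u\upex_i}))\|_{T^i} + \|\nabla(I^{k+1}_{T^i}(\rev{u\upex_i})-u_{T^i})\|_{T^i}$, noting that the cell component of $\ehat$ is precisely $I^{k+1}_{T^i}(\rev{u\upex_i})-u_{T^i}$, so the second contribution is bounded by $\|\ehat\|_{h0}$ by definition of the stability norm~\eqref{eq:stab.norm}. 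The first contribution is a pure interpolation error controlled directly by the approximation bound (the $h_S\|\nabla(\cdots)\|_{S^i}$ terms embedded in $\epsilon_{T,i}$, rescaled). I expect the main technical bookkeeping obstacle to be the handling of the ill-cut sub-cells $(S,i)\in\pmeshKO$: on those the cell polynomial is inherited by extension via~\eqref{eq:def_IkT_KO}, so the interpolation error on $S^i$ must be estimated through the extension of the polynomial defined on $T^i=\N_i(S)^i$, which is exactly what the summation over $S\in\{T\}\cup\N_i^{-1}(T)$ in the error measures $\epsilon_{T,i}$ and $\epsilon^\Gamma_{T,i}$ is designed to absorb. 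Keeping the indexing consistent across the $\pmesh$, $\pmeshOK$, and $\pmeshKO$ partitions, and reusing the enumeration~\eqref{eq:enum_PmeshKO}, is the part that demands care rather than any deep new idea.
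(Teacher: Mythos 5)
Your proposal is correct and follows essentially the same route as the paper: define $\hat e_h := \hIhk(u\upex)-\uhat$, use $a_h(\hat e_h,\hat e_h)=\Phi_h(\hat e_h)$ together with the stability and consistency lemmas to bound $\|\hat e_h\|_{h0}$, insert the approximation estimates to obtain the $h_T^{2(s-1)}$ factor, and conclude by the triangle inequality and the $H^s$-stability of the extension operators. The only detail worth making explicit (which the paper flags in passing) is that the jump term $\delta_{i1}\epsilon^\Gamma_{T,1}$ carries the weight $\kappa_1$ while involving $|E_2^s(u\upex_2)|_{H^s(\Delta_2(T))}$, so the assumption $\kappa_1\le\kappa_2$ is used to reassign that contribution to the $\kappa_2$-weighted sum.
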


\begin{proof}
Define the discrete error as $\hat{e}_h := \hIhk(\rev{u\upex})-\uhat \in \Uhatz$.
Since $a_h(\hat{e}_h,\hat{e}_h) = \Phi_h(\hat{e}_h)$, invoking the stability result from Lemma~\ref{stabilitylemma} and the bound on the consistency error from Lemma~\ref{lem:bnd.consist} gives
\[
\|\hat{e}_h\|_{h0}^2 \lesssim \sum_{(T,i)\in\pmeshOK} \kappa_i h_T^{-2} 
\big\{ \epsilon_{T,i}(\rev{u\upex_i}) + \delta_{i1}\rev{\epsilon^\Gamma_{T,i}(u\upex)} \big\}^2.
\] 
\rev{Owing to the approximation result from Lemma~\ref{lem:approx.IkT} 
and since $\kappa_1\le \kappa_2$ and $\Delta(T)\subset \Delta_2(T)$, we infer that
\[
\|\hat{e}_h\|_{h0}^2 \lesssim \sum_{(T,i)\in\pmeshOK} \kappa_i h_T^{2(s-1)} |E_i^s(\rev{u\upex_i})|_{H^s(\Delta_2(T))}^2.
\]
Invoking the triangle inequality then} establishes the first bound in~\eqref{eq:err.est}.
The second bound follows from the shape-regularity of the mesh, and the $H^s$-stability of the extension operators $E_i^s$. 
\end{proof}

\section{Numerical results}
\label{sec:res}

In this section, we present numerical results to illustrate the convergence rates established in Theorem~\ref{thm:error}. We also compare the present method to the one from \cite{BCDE21} stabilized by a cell-agglomeration procedure, and we briefly investigate some aspects related to the implementation (quadrature, local polynomial bases, pairing criterion). 

\begin{figure}[htb!]
\centering
\includegraphics[width=0.25\textwidth]{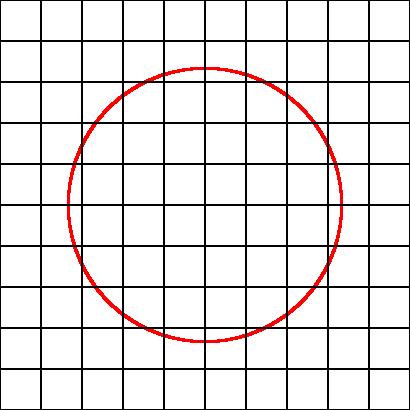}
\includegraphics[width=0.25\textwidth]{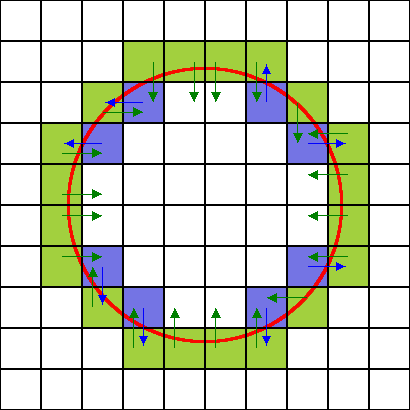}
\includegraphics[width=0.25\textwidth]{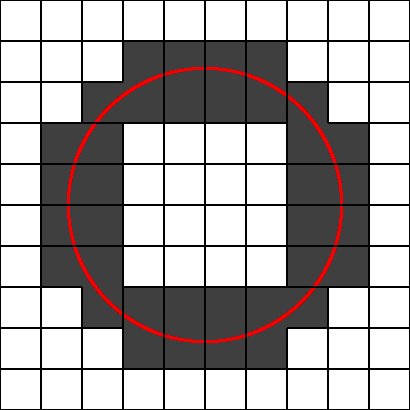}
\\[0.125cm]
\includegraphics[width=0.25\textwidth]{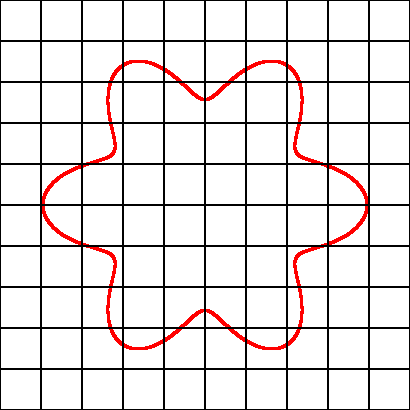}
\includegraphics[width=0.25\textwidth]{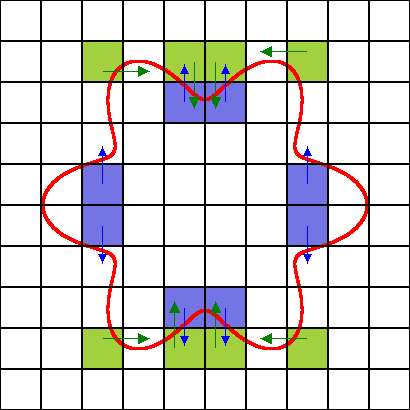}
\includegraphics[width=0.25\textwidth]{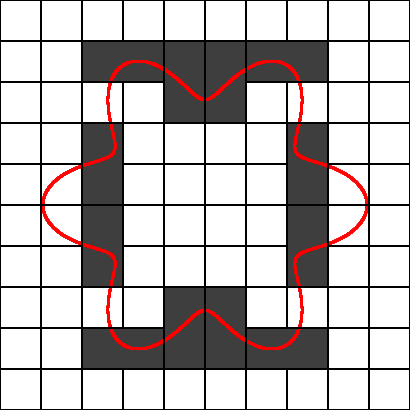}\\[0.25cm]
\caption{Circular (top row) and flower-like (bottom row) interfaces.
Left column: Coarsest mesh. Central column: Outcome of pairing procedure for polynomial extension stabilization, cells in $\TKOonemesh$ are colored in green, and cells in $\TKOtwomesh$ are colored in blue, arrows indicate the pairing operator. Right column: Outcome of pairing procedure for cell-agglomeration stabilization, agglomerated cells are colored in dark.}
\label{fig:meshes}
\end{figure} 

In all cases,  we consider the unit square domain $\Omega := (0,1)^2$ discretized with uniform Cartesian meshes of size $h := \sqrt{2} \times 0.1 \times 2^{-\ell}$ with $\ell \in \{0,\ldots,4\}$. The polynomial degree is taken such that $k\in\{0,\ldots,3\}$. 
The interface is defined  using a level-set function $\Phi$, with $\Gamma := \{(x, y) \in \Omega \mid \Phi(x, y) = 0\}$ and $\Omega_i = \{(x, y) \in \Omega \mid (-1)^i\Phi(x, y) > 0\}$ for all $i\in\{1,2\}$. We always take $\kappa_1:=1$, and modify the diffusivity contrast by taking $\kappa_2 = 10^{m}$, $m\in\{0,\ldots,4\}$, where $m=0$ corresponds to no contrast and $m=4$ to a highly contrasted setting.
We consider two shapes for the interface: a circular shape and a flower-like shape, defined by the following level-set functions:  
\begin{subequations}
\begin{align}
\Phi_{\textsc{c}}(x, y) &:= (x-a)^2 + (y-b)^2 - R^2, \\
\Phi_{\textsc{f}}(x, y) &:= (x-a)^2 + (y-b)^2 - R^2 + c \cos(n \theta),
\end{align}
\end{subequations}
with $\theta := \arctan \left(\frac{y-b}{x-a}\right)$ if $x \geq a$, and $\theta := \pi + \arctan \left(\frac{y-b}{x-a}\right)$ if $x < a$, $a:=b:=0.5$, $R = \frac{1}{3}$, $c:=0.03$, and $n=8$. 
Figure~\ref{fig:meshes} illustrates on the coarsest mesh ($\ell=0$) the two interfaces (left column), and the outcome of the pairing procedure for polynomial extension stabilization (central column) and for cell-agglomeration stabilization (right column).

\subsection{Implementation aspects}

Quadratures along the interface and in the cut sub-cells are realized by dividing the interface into $2^r$ segments, $r\in\mathbb{N}$, and creating a sub-triangulation of the two cut sub-cells. The construction of the sub-triangulation is performed as discussed in \cite{BCDE21}; see, in particular, Figure 4.1 therein for an illustration. Unless explicitly stated otherwise, we set $r:=8$. \rev{We also take $\eta_{\N}:=20$ for the stabilization bilinear form $s_h^{\N}$ in~\eqref{eq:def_shN}. Concerning the flagging of ill-cut cells, we use the parameter $\vartheta:=0.3$. Our numerical experiments below show a marginal impact of the choice of this parameter on stability and accuracy. As this numerical observation is not backed up by theoretical results, we recommend to use a flagging parameter $\vartheta$ as indicated above.} 

Another important aspect is the choice of the polynomial bases in the sub-cells and the sub-faces. Both bases are realized by considering centered and scaled monomials. In particular, the barycenter of each sub-cell is used for the centering of the corresponding basis. The barycenter is computed using the above sub-triangulation. The scaling of the monomials is isotropic and uses half of the diameter of the original cell. \rev{In the ill-cut cells, say $S_i$ with $i\in\{1,2\}$, the centering and scaling is defined by considering the merged cell $S_i\cup \N_i(S_i)$.}

\subsection{Convergence rates}

\begin{figure}[!htb]
\centering
\includegraphics[width=0.425\textwidth]{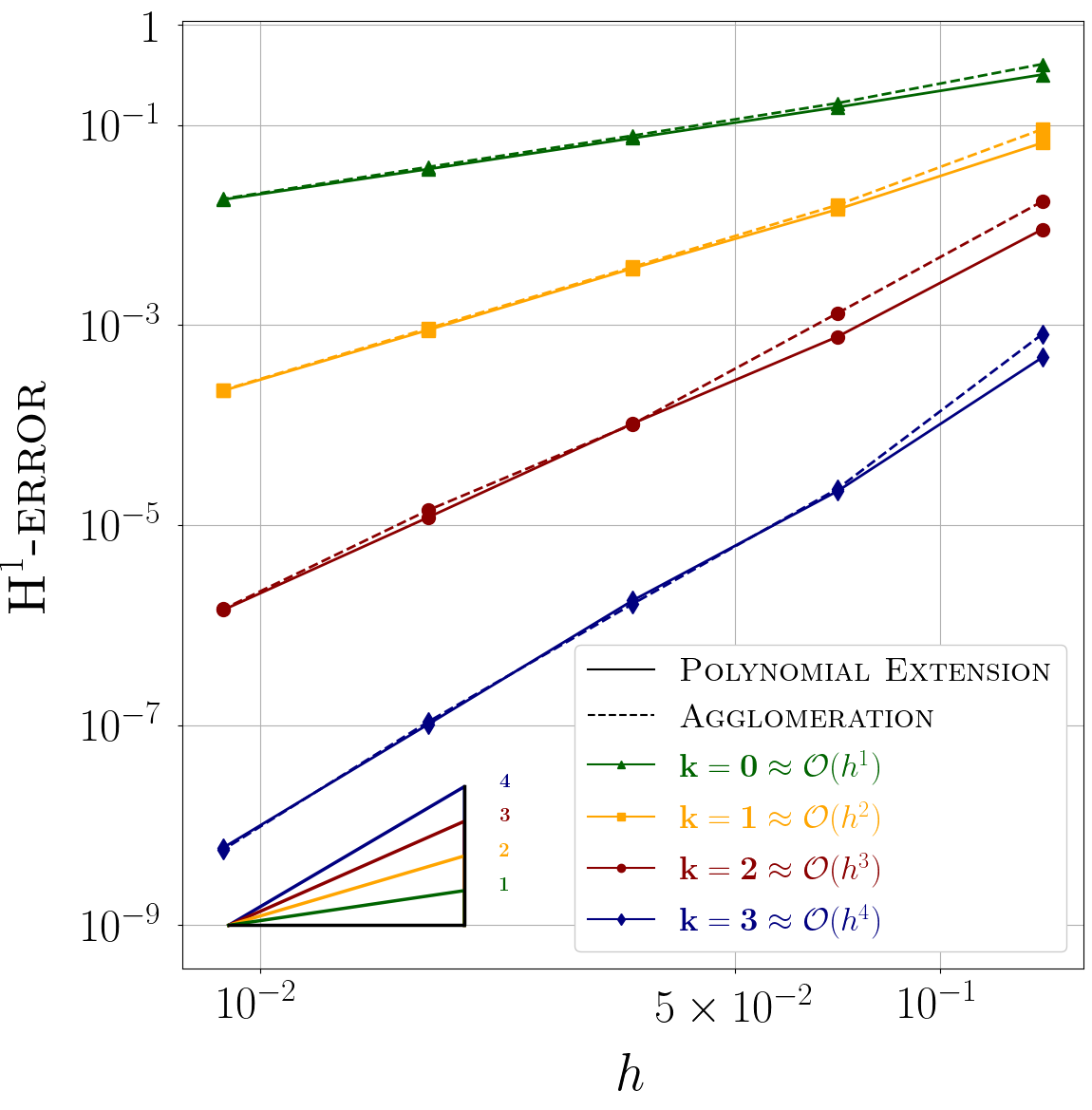}
\includegraphics[width=0.425\textwidth]{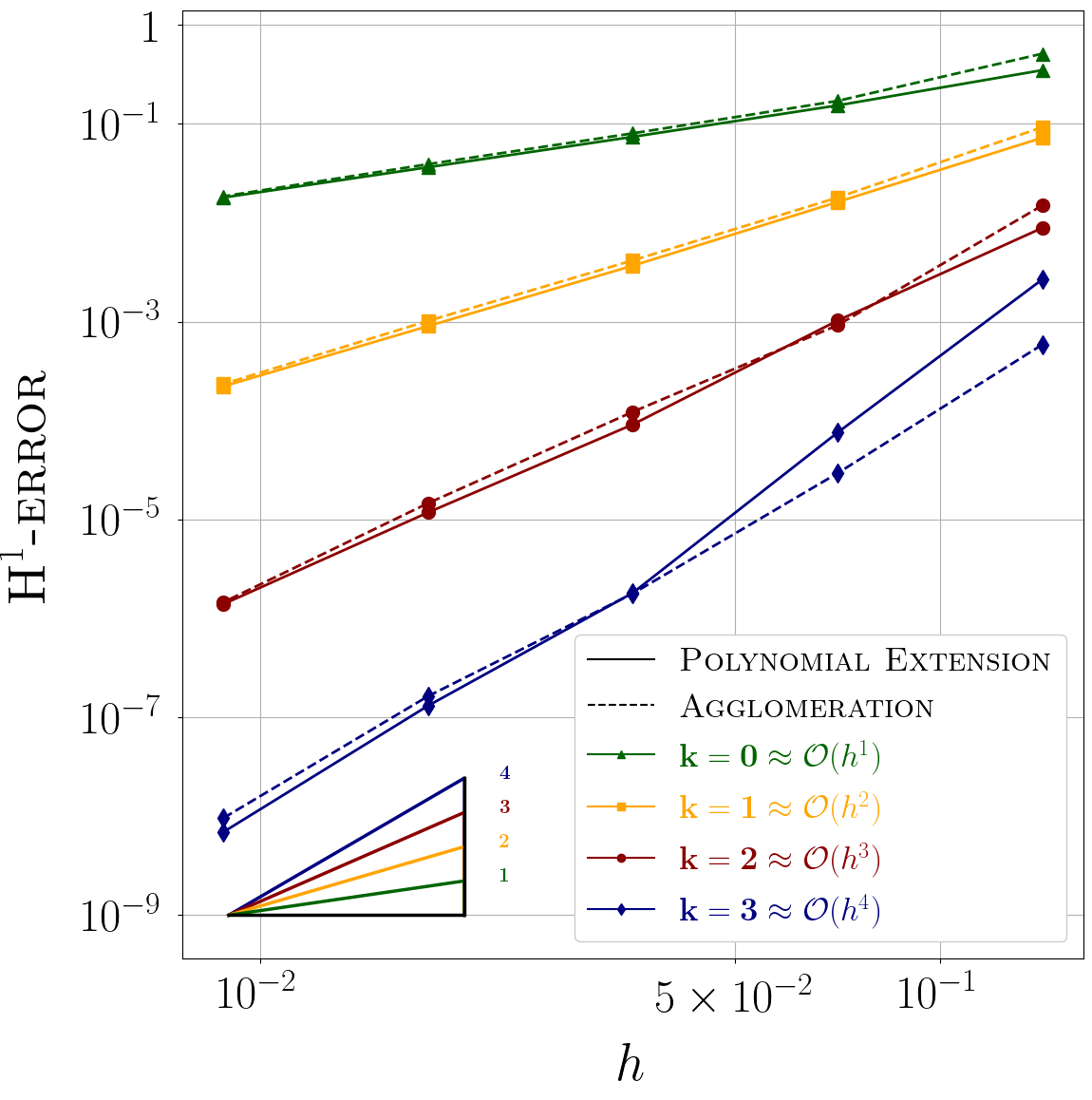} 
\caption{Errors as a function of the mesh-size for the exact solution \eqref{sol_sinsin}. 
Comparison between stabilization by polynomial extension (solid lines) and
by cell agglomeration (dashed lines).
Left: Circular interface. Right: Flower-like interface. 
}
\label{fig:err_ext_agglo}
\end{figure}  

We consider first a test case with no contrast and no jumps ($g_D=g_N=0$). The exact solution
is taken to be (in Cartesian coordinates $(x,y)$)
\begin{equation}
\rev{u\upex}(x,y) := \sin(\pi x)\sin(\pi y).
\label{sol_sinsin}
\end{equation} 
Errors are reported in Figure~\ref{fig:err_ext_agglo} for the circular interface (left panel) and the flower-like interface (right panel). Here and below, errors are evaluated using the energy norm considered in Theorem~\ref{thm:error}. In both panels, we compare the results obtained using stabilization by polynomial extension (solid lines) and by cell agglomeration (dashed lines). Both approaches yield similar results, with slightly better errors when using polynomial extension, especially on coarse meshes, except for $k=3$ and the flower-like interface. 

\begin{figure}[!htb]
\centering
\includegraphics[width=0.425\textwidth]{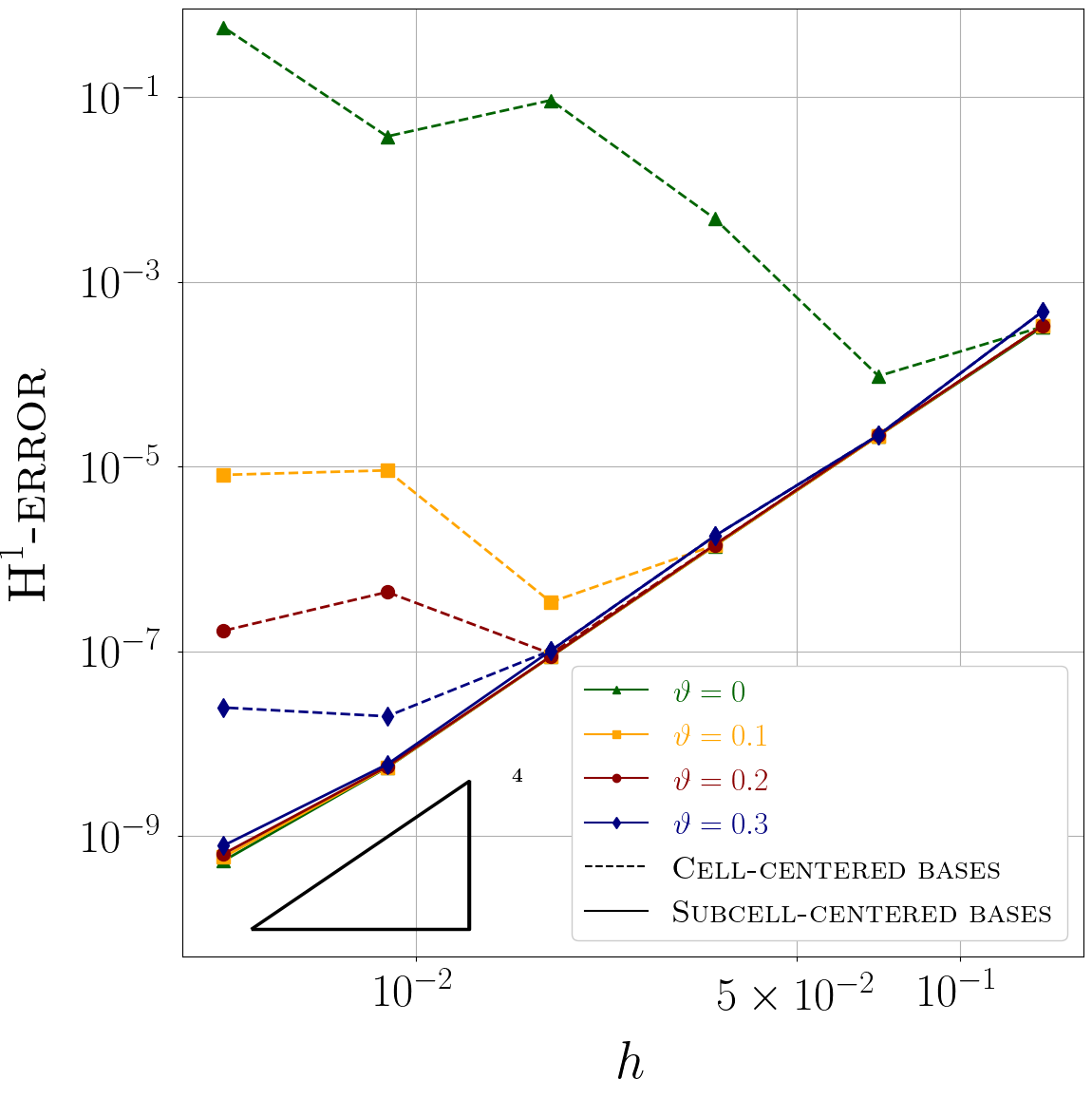}
\includegraphics[width=0.425\textwidth]{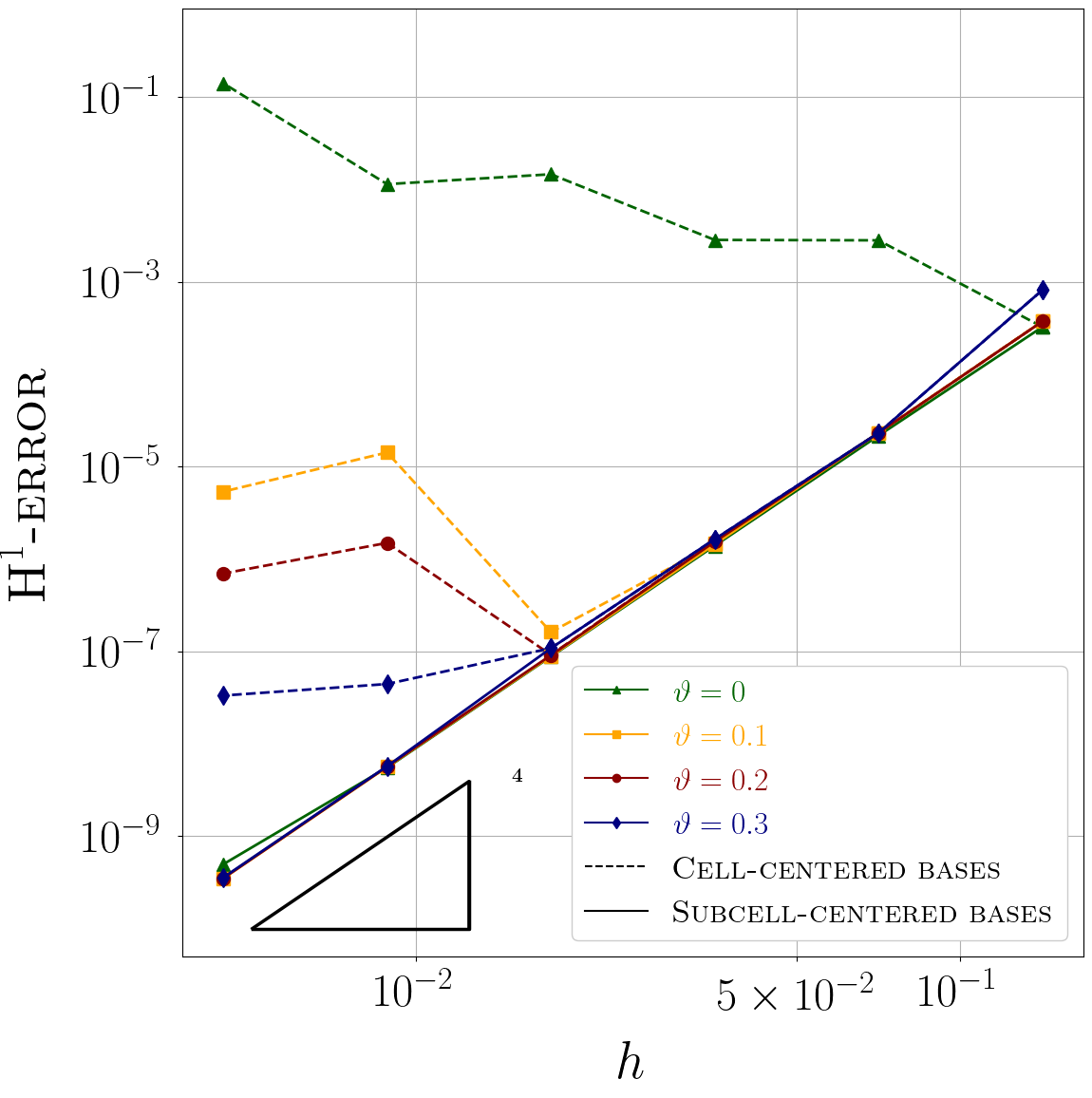} 
\caption{Errors as a function of the mesh size for the exact solution \eqref{sol_sinsin} for $k=3$ and various values of the pairing parameter $\vartheta$ used for flagging ill-cut cells. Left: polynomial extension. Right: cell agglomeration.}
\label{fig:pairing_criterion}
\end{figure} 

We next study the sensitivity of the errors with respect to the value of the pairing parameter $\vartheta$ used for flagging ill-cut cells. We consider again the exact solution~\eqref{sol_sinsin}. Errors are reported in Figure~\ref{fig:pairing_criterion} for $k=3$ and $\vartheta\in \{0,0.1,0.2,0.3\}$ ($\vartheta=0$ corresponds to no pairing, irrespective of the ill cut). The left (resp., right) panel corresponds to polynomial extension (resp., cell agglomeration). Solid lines correspond to polynomial bases centered at the barycenter of each sub-cell (the default choice in our implementation), whereas dashed lines correspond to centering at the barycenter of the original cell for both sides of the cut. We observe that this second choice yields poor results owing to poor conditioning of the local matrices, whereas the first choice is robust to the presence of ill cuts. Somewhat surprisingly, the robustness is such that it allows to flag cells with a very loose criterion ($\vartheta=0.1$) or even to flag no cells at all ($\vartheta=0$), and still obtain close-to-optimal errors. 

\begin{figure}[!htb]
\centering
\includegraphics[width=0.425\textwidth]{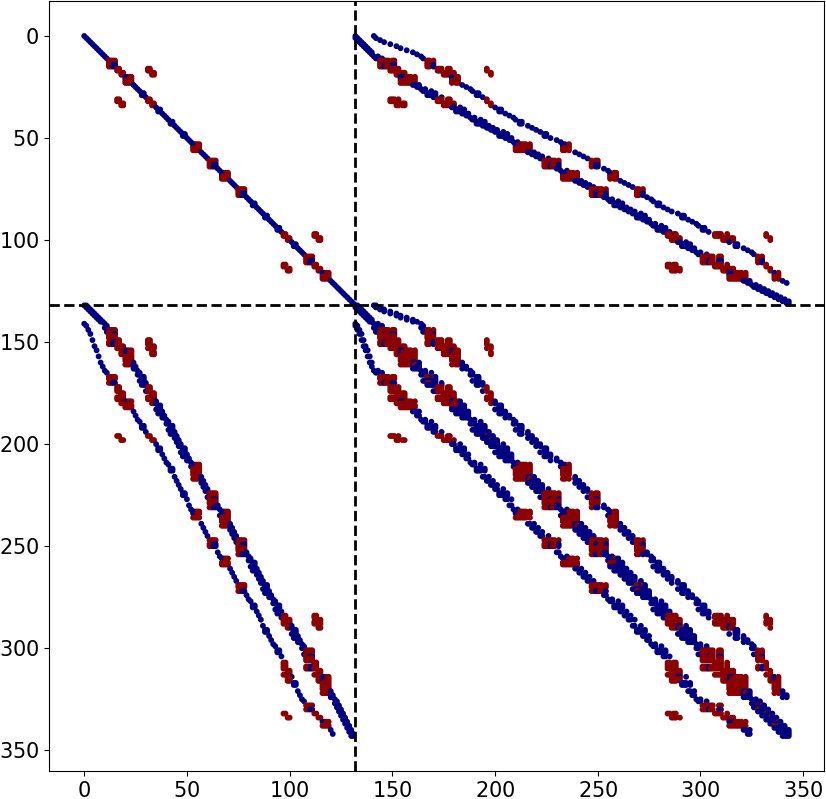}
\includegraphics[width=0.425\textwidth]{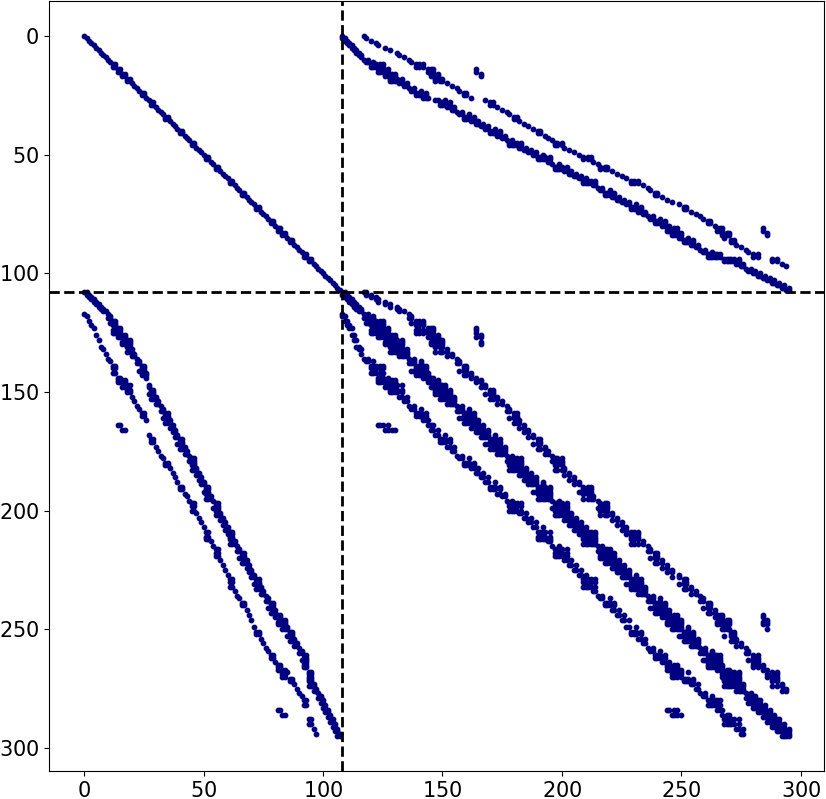} 
\caption{Sparsity profiles of the stiffness matrices obtained using polynomial extension (left) and cell agglomeration (right). In the left panel, the red dots indicate the coupling blocks resulting from polynomial extension. In both panels, the global block-decomposition into cell and face unknowns is indicated by black dashed lines. Coarsest mesh ($\ell=0$).}
\label{fig:sparsity}
\end{figure}  

A further comparison between polynomial extension and cell agglomeration is provided in Figure~\ref{fig:sparsity} which compares the sparsity profiles of the stiffness matrices obtained with both procedures \rev{using the lowest-order polynomial setting}. We observe that both approaches lead to a similar sparsity profile, and that the stiffness matrix corresponding to cell agglomeration is slightly smaller (as expected). The sparsity pattern of the matrix corresponding to cell agglomeration directly results from the data structure of the agglomerated mesh, whereas the sparsity pattern of the matrix corresponding to polynomial extension is derived at the assembly stage by means of the pairing operator.

\begin{figure}[!htb]
\centering
\includegraphics[width=0.425\textwidth]{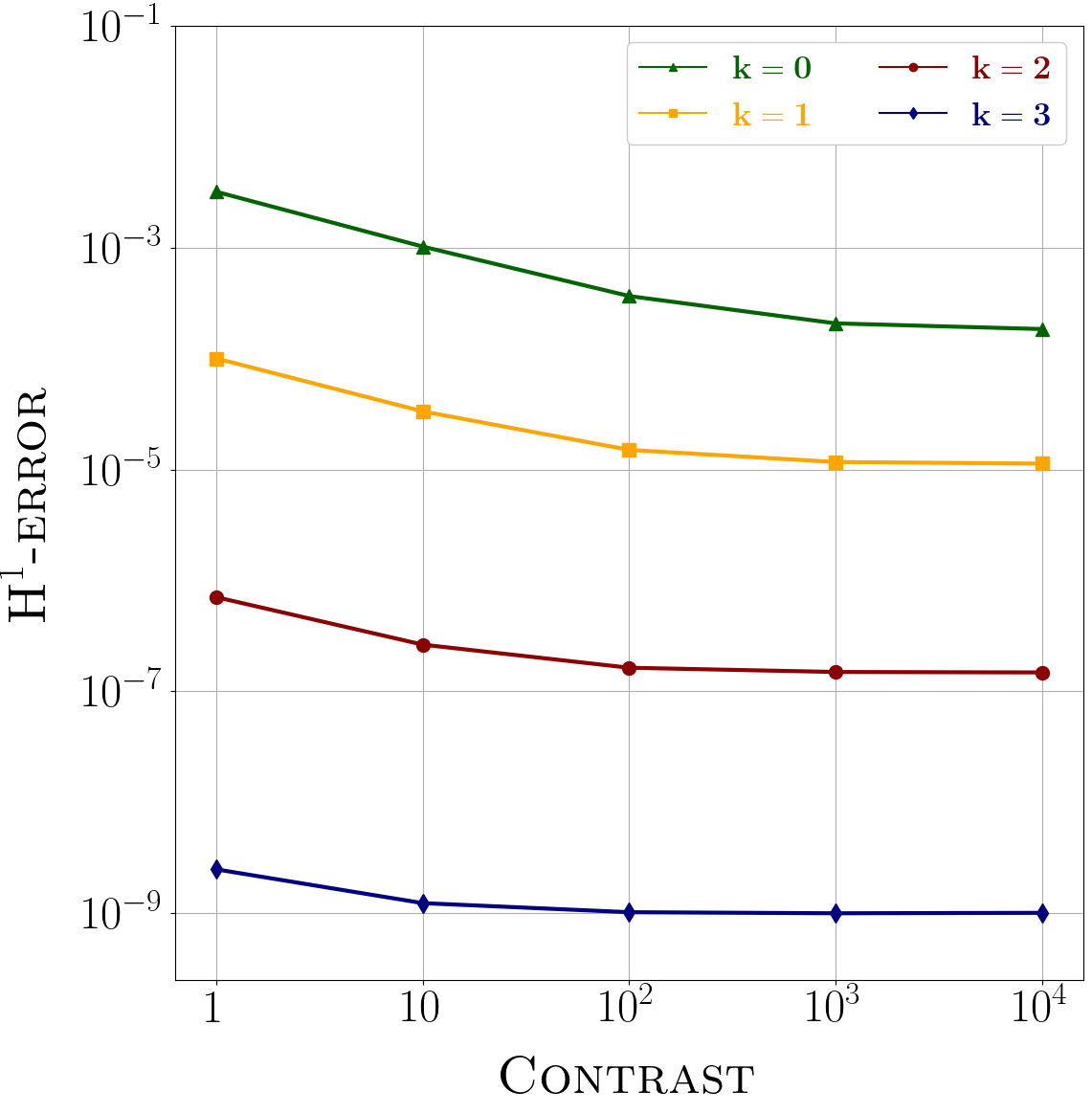}
\includegraphics[width=0.425\textwidth]{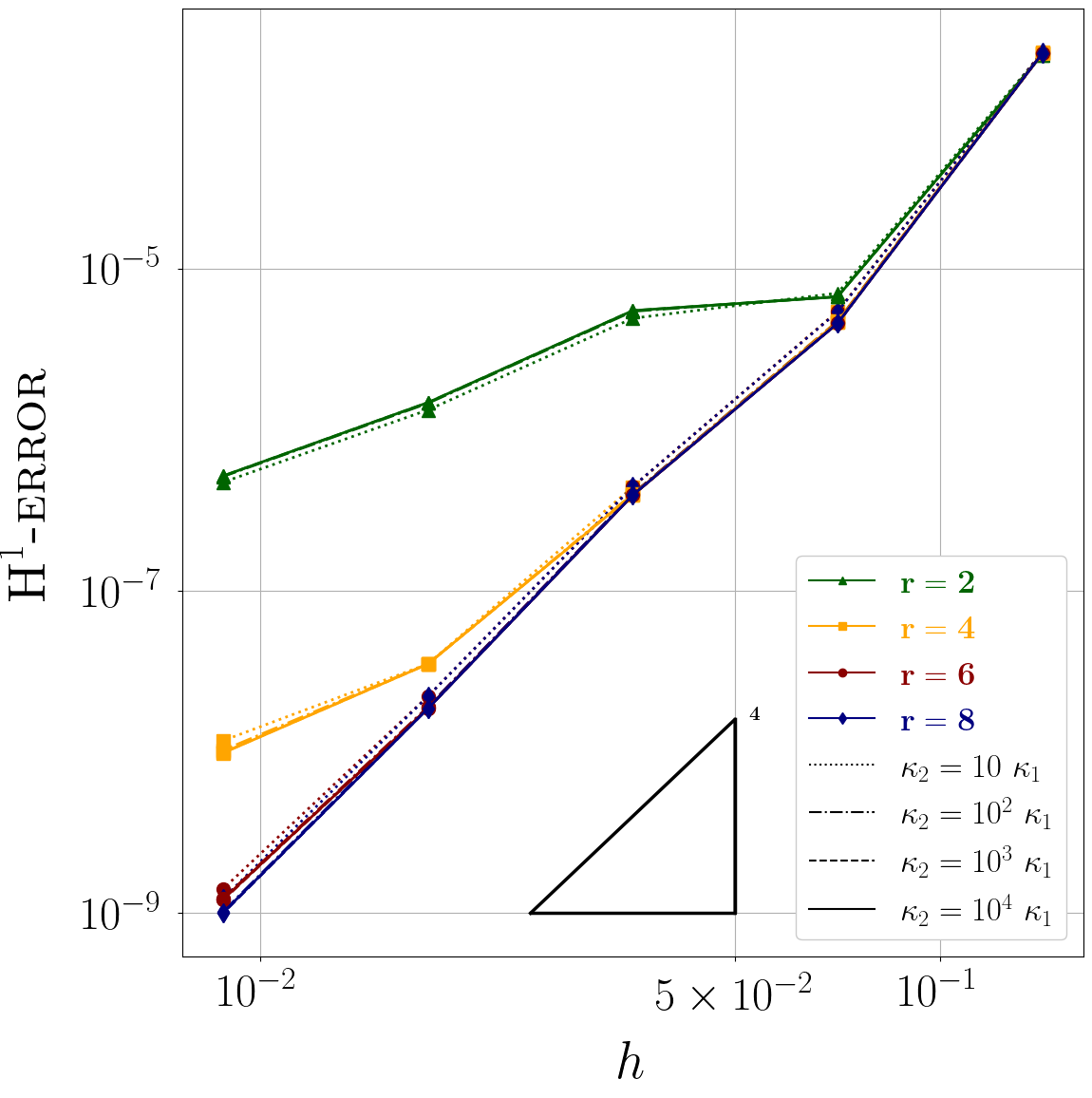} 
\caption{Left: Errors for the exact solution~\eqref{sol_circle_contrast} as a function of the diffusivity contrast on the finest mesh ($\ell = 4$) and for various polynomial degrees ($k\in\{0,\ldots,3\}$). Right: Errors as a function of the mesh size for various values of the sub-triangulation parameter, $k=3$, $r\in\{2,4,6,8\}$, and various diffusivity contrasts, $m\in\{0,\ldots,4\}$.}
\label{fig:contrast}
\end{figure}  

We now turn our attention to a test case with diffusivity contrast and jumps across the 
interface. We first consider the exact solution for the circular interface such that
(in radial coordinates $(\rho,\theta)$)
\begin{equation}\label{sol_circle_contrast}
\rev{u\upex_1}(\rho) := \frac{\rho^6}{\kappa_1},\qquad
\rev{u\upex_2}(\rho) := \frac{\rho^6}{\kappa_2}+R^6\bigg(\frac{1}{\kappa_1}-\frac{1}{\kappa_2}\bigg).
\end{equation}
Notice that this solution does not exhibit jumps across the interface ($g_D=g_N=0$). 
The errors in the highly contrasted case where $\kappa_2 = 10^4 \kappa_1$ exhibit
the same optimal convergence rates as those reported in Figure~\ref{fig:err_ext_agglo}
(not shown for brevity). To highlight the robustness of the approach with respect to
the diffusivity contrast, we report in the left panel of Figure~\ref{fig:contrast}
the errors as a function of $\kappa_2=10^m\kappa_1$, $m\in\{0,\ldots,4\}$,
obtained on the finest mesh ($\ell=4$) and for various polynomial degrees 
($k\in\{0,\ldots,3\}$). The right panel of Figure~\ref{fig:contrast} underlines the
importance of ensuring sufficient geometric resolution when performing quadratures
in the cut sub-cells. Therein, we report the error as a function of the mesh size
for the sub-triangulation parameter $r\in\{2,4,6,8\}$ and various diffusivity 
contrasts. We observe that in all cases, taking $r\ge 6$ is required to avoid that
the geometric errors pollute the optimal decay of the discretization errors.

\begin{figure}[!htb]
\centering
\includegraphics[width=0.425\textwidth]{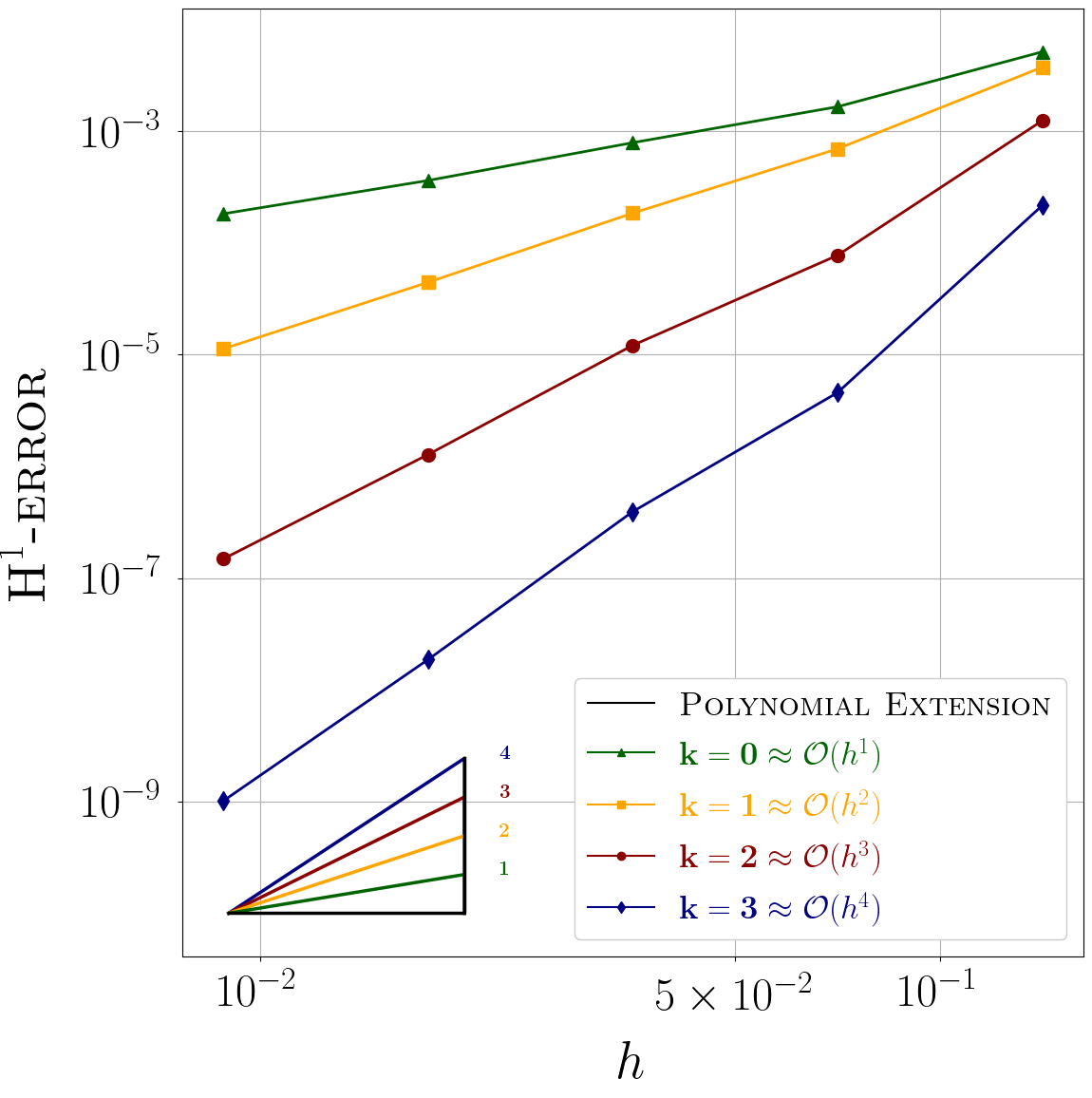} 
\includegraphics[width=0.425\textwidth]{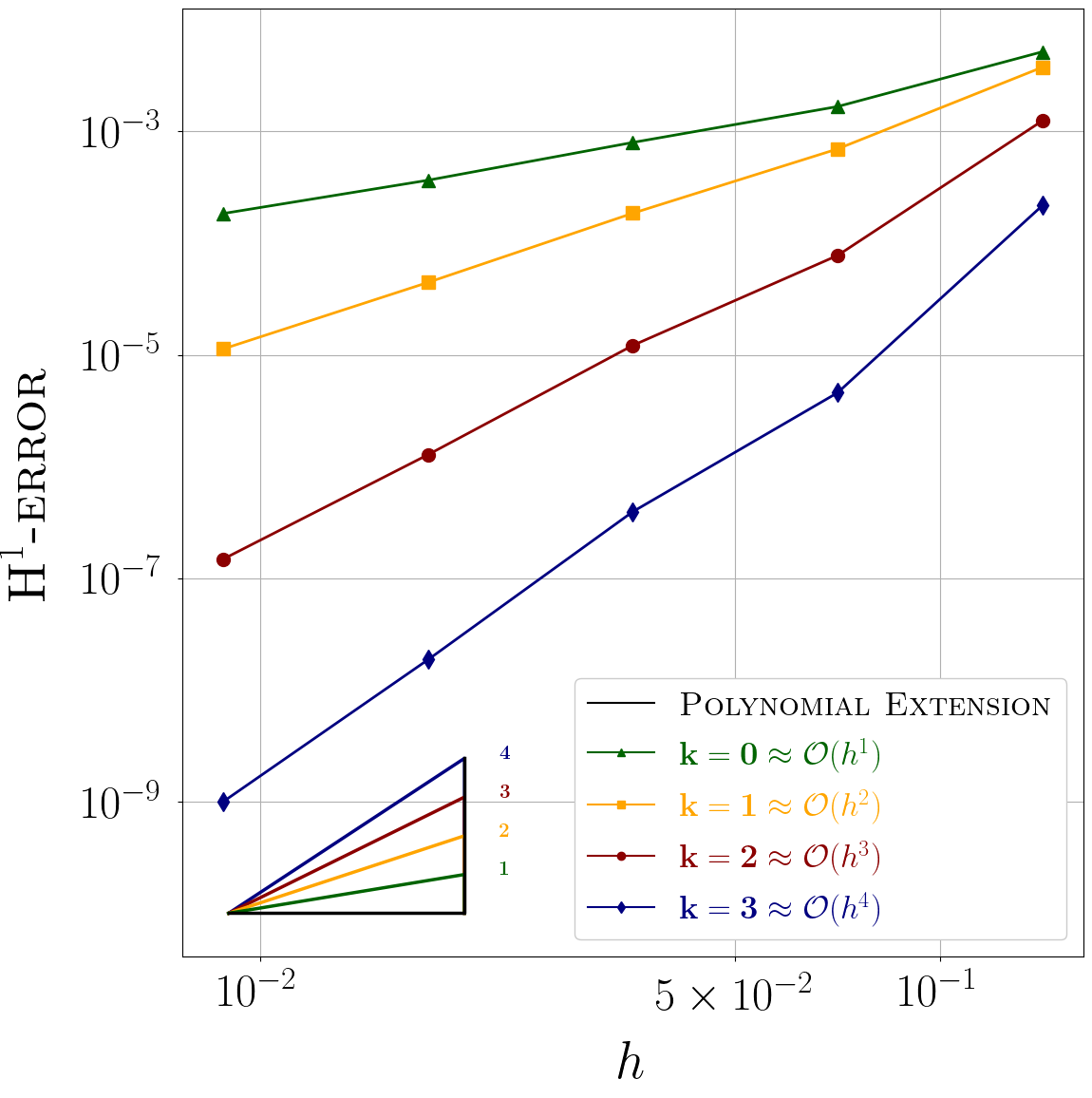} 
\caption{Errors as a function of the mesh size for the exact solution \eqref{sol_gN} (left) and the exact solution \eqref{sol_gD} (right) for various polynomial degrees 
$k\in\{0,\ldots,3\}$; diffusivity contrast set to $\kappa_2=10^4\kappa_1$.} 
\label{fig:jumps_easy}
\end{figure}  

We now include jumps across the interface while considering the highly contrasted setting
where $\kappa_2=10^4\kappa_1$. For this purpose, we modify the exact solution in~\eqref{sol_circle_contrast} and set 
\begin{subequations} 
\begin{alignat}{2} 
\rev{u\upex_1}(\rho) & := \frac{\rho^6}{\kappa_1}, &\qquad 
\rev{u\upex_2}(\rho) & := \frac{\rho^8-R^8}{\kappa_2} + \frac{R^6}{\kappa_1}, \label{sol_gN} \\
\rev{u\upex_1}(\rho) & := \frac{\rho^6}{\kappa_1}, &\qquad
\rev{u\upex_2}(\rho) & := \frac{\rho^6}{\kappa_2}. \label{sol_gD}
\end{alignat}
\end{subequations}
Notice that, for the exact solution~\eqref{sol_gN}, we have $g_N=2R^5(3-4R^2)$ and $g_D=0$, whereas, 
for the exact solution~\eqref{sol_gD}, we have $g_N=0$ and $g_D=R^6(\frac{1}{\kappa_1}-\frac{1}{\kappa_2})$. Errors as a function of the mesh size are reported in Figure~\ref{fig:jumps_easy} for polynomial degrees $k\in\{0,\ldots,3\}$. Optimal convergence rates are observed in all cases. Actually, the errors for both exact solutions are very similar. 

\begin{figure}[!htb]
\centering
\includegraphics[width=0.425\textwidth]{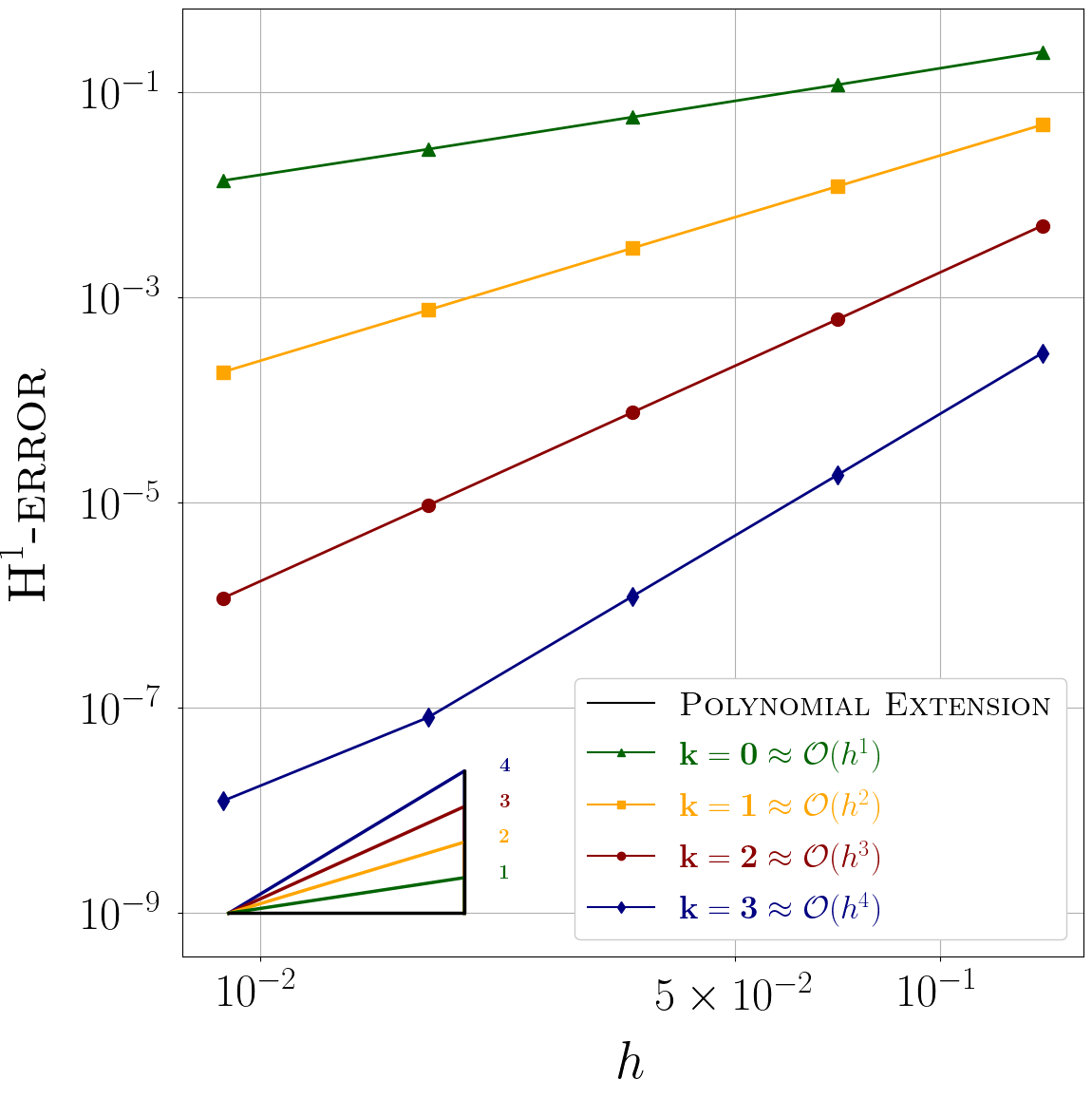} 
\includegraphics[width=0.425\textwidth]{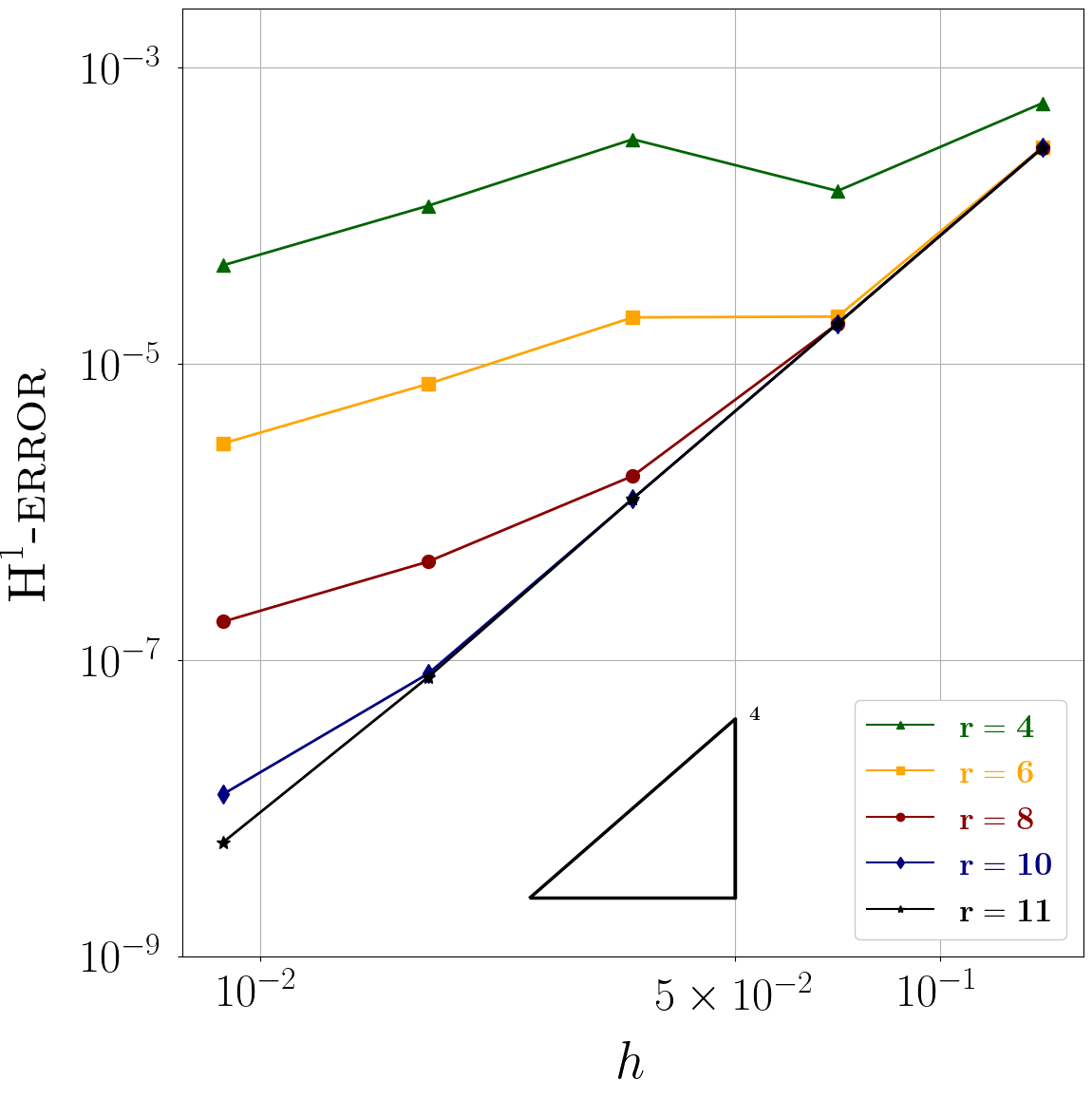} 
\caption{Left: Errors as a function of the mesh size for the exact solution \eqref{sol_gD_gN} for various polynomial degrees $k\in\{0,\ldots,3\}$ (with sub-triangulation parameter set to $r=10$). Right: Errors as a function of the mesh size for $k=3$ and $r \in \{4, 6, 8, 10,11\}$. No diffusivity contrast.}
\label{fig:jumps_hard}
\end{figure} 

A more challenging setting is obtained when considering for the exact solution 
\begin{equation} \label{sol_gD_gN}
\rev{u\upex_1}(x,y) := \cos(y)e^x, \qquad
\rev{u\upex_2}(x,y) := \sin(\pi x)\sin(\pi y),
\end{equation}
leading to variable jump data $g_D$ and $g_N$. Here, we do not consider a diffusivity contrast
($\kappa_1=\kappa_2=1$). Errors as a function of the mesh size are reported in the left panel of Figure~\ref{fig:jumps_hard} for various polynomial degrees $k\in\{0,\ldots,3\}$ and sub-triangulation parameter set to $r=10$. Optimal convergence rates are observed in all cases. To motivate the choice for $r$, we report in the right panel of Figure~\ref{fig:jumps_hard} the errors as a function of the mesh size for $k=3$ and $r \in \{4, 6, 8, 10,11\}$. We observe that the geometric error does not pollute the discretization error only for $r=10$\rev{, except for $k=3$ on the finest mesh where $r=11$ is actually necessary to suppress the geometric error} (notice in passing the very low values attained by the error). 

\begin{figure}[!htb]
\centering
\includegraphics[width=0.425\textwidth]{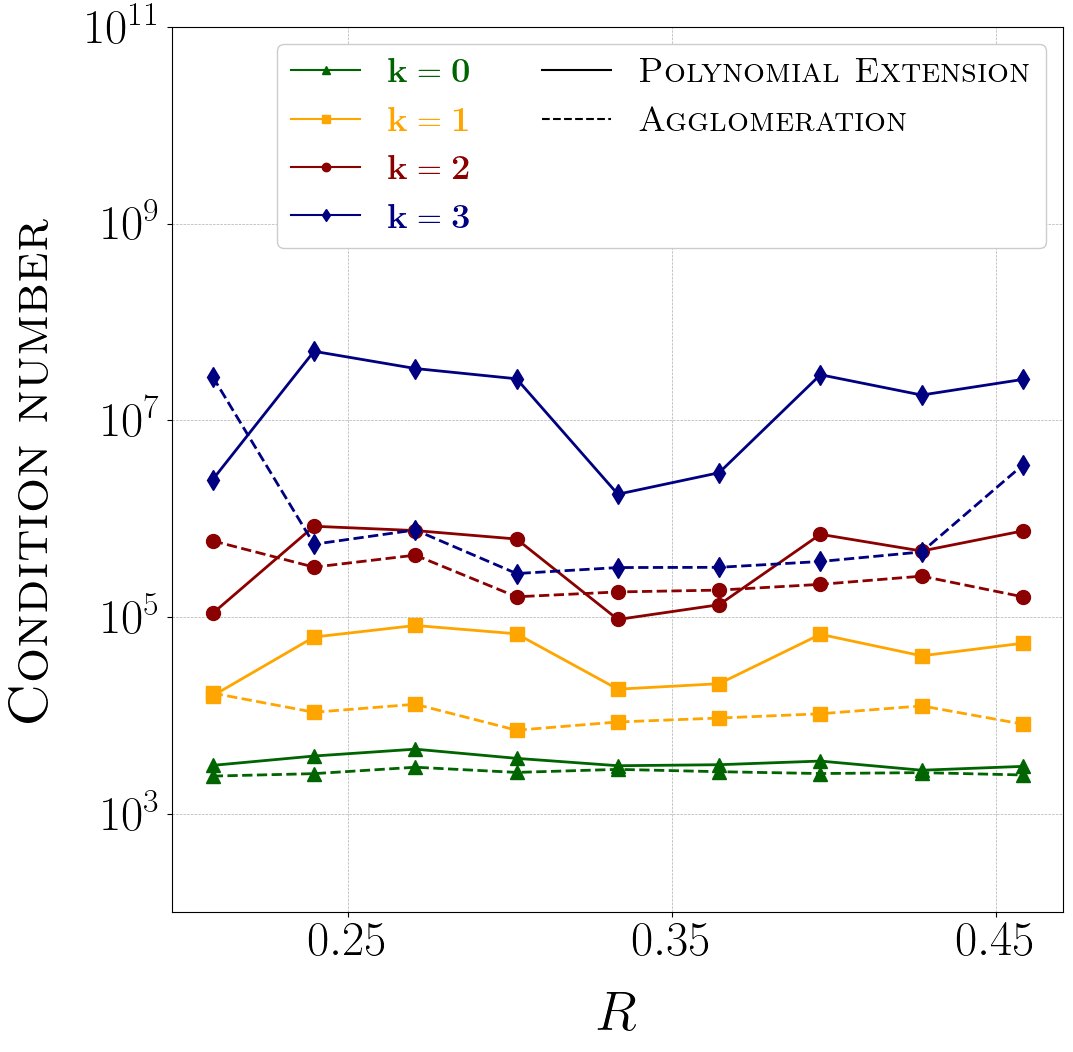} 
\includegraphics[width=0.425\textwidth]{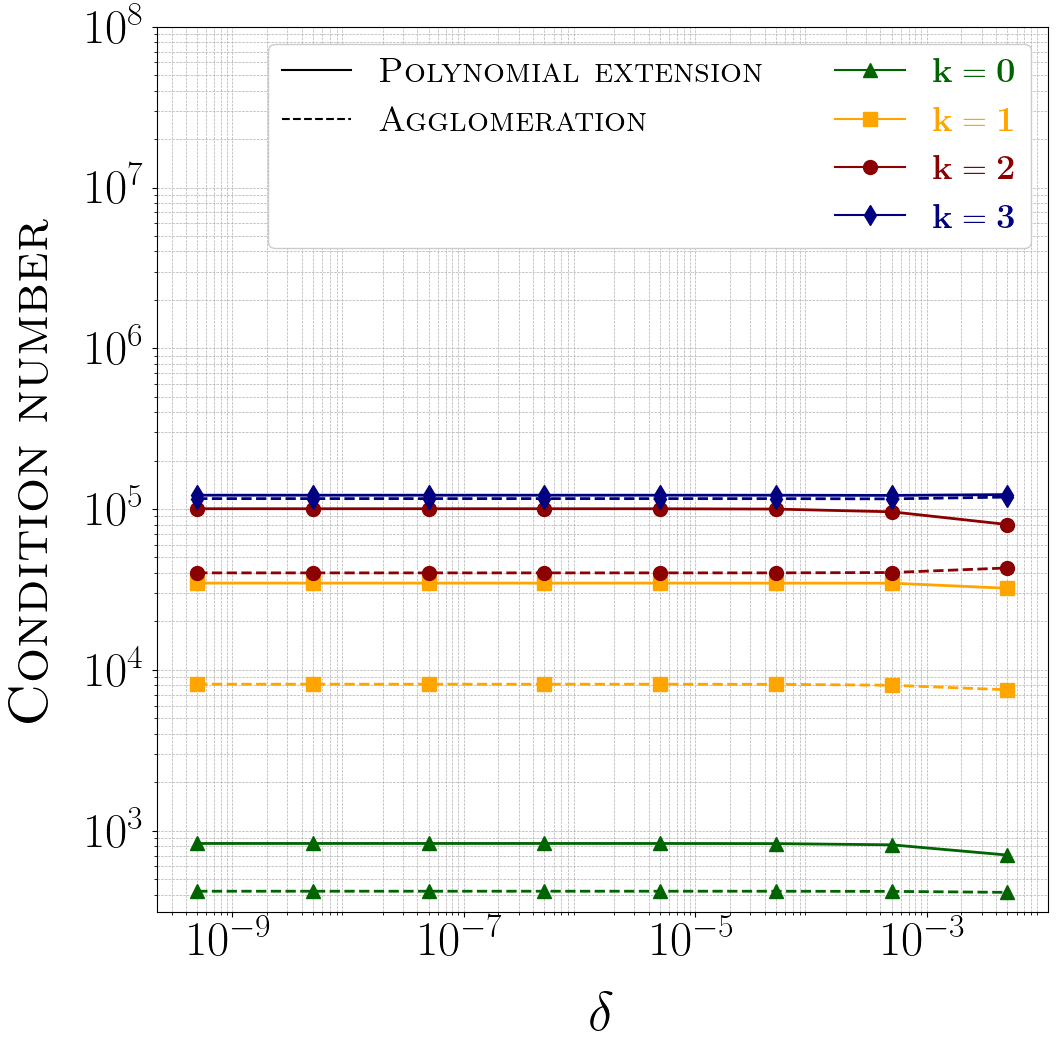} 
\caption{Euclidean condition number of the stiffness matrix on the coarsest mesh ($\ell=0$) as a function of the radius of the circular interface (left) and as a function of the position parameter $\delta$ for the square interface, 
using polynomial extension (solid lines) and cell agglomeration (dashed lines).}
\label{fig:conditioning}
\end{figure} 

Finally, we perform a brief study of the conditioning of the stiffness matrix. 
In the left panel of Figure~\ref{fig:conditioning}, we consider a circular interface
with radius $R:=\frac13 + \frac{i}{32}$, $i\in\{-4,\ldots,4\}$, and we report the Euclidean
condition number as a function of the radius, using either polynomial extension (solid lines) 
or cell agglomeration (dashed lines). We consider the coarsest mesh ($\ell=0$) and
polynomial degrees $k\in\{0,\ldots,3\}$. We observe that, for each polynomial degree, the
conditioning remains fairly insensitive to the stabilization method of the ill-cut cells.
To exacerbate the effect of ill-cut cells, we now consider a square interface with level-set function 
$\Phi_{\textsc{s}}(x,y) := \max(x-a,y-b)-(0.25+\delta)$ and position parameter $\delta:=0.5\times 10^{-p}$, $p\in\{2,\ldots,9\}$. 
The right panel of Figure~\ref{fig:conditioning} reports the 
Euclidean condition number as a function of the position parameter of the square interface. 
We observe \rev{full robustness of the condition number with respect to the cut position, for both polynomial extension and cell agglomeration.
Further numerical results (not displayed for brevity) indicate that a variant of the stabilization bilinear form $s_h^{\N}$ where the penalty is enforced on the faces of $T$ with a weight $h_T^{-1}$ does not lead to a robust behavior, but instead to a linear growth in $\delta^{-1}$ for $k\in\{2,3\}$.}
We also emphasize that the errors still behave optimally (figure 
omitted for brevity), in agreement with the above convergence analysis.

\section{Proofs}
\label{sec:proofs}

This section collects the proofs of the two results stated in Section~\ref{sec:tools}.

\subsection{Proof of Lemma~\ref{lem:disc_trace_app}}

\begin{proof}
(1) Proof of~\eqref{eq:disc_trace_OK}. Let $(T,i)\in\pmeshOK$ and $\phi\in \P^\ell(T^i;\R)$.
\rev{We have}
\[
\sum_{S\in\{T\}\cup \N_i^{-1}(T)} \Big\{ \|\phi^+\|_{S} +
h_S^{\frac12} \|\phi^+\|_{\dSi\cup\SG} \Big\} \lesssim \sum_{S\in\{T\}\cup \N_i^{-1}(T)} \|\phi^+\|_{S}
\lesssim \|\phi^+\|_{\Delta(T)},
\]
where \rev{the first bound follows from the discrete trace inequality from 
\cite[Lemma~3.4]{BE18} upon observing that the aggregated cell $T\cup \N_i^{-1}(T)$
satisfies the ball condition invoked therein,} and 
the second bound follows from the construction of the pairing operator.
We next observe that there is a ball $\ball(\Delta(T))$
of diameter $\gamma h_T$ such that $\Delta(T) \subset \ball(\Delta(T))$, where $\gamma$ 
only depends on the mesh shape-regularity parameter. We also recall that, since 
$(T,i)\in\pmeshOK$, there is a ball $\ball(T,i)$ of diameter $\vartheta h_T$, so that 
$\ball(T,i) \subset T^i$. We then infer that
\[
\|\phi^+\|_{\Delta(T)} \le \|\phi^+\|_{\ball(\Delta(T))} \lesssim \|\phi\|_{\ball(T,i)} 
\le \|\phi\|_{T^i},
\]
where the inverse inequality invoked in the second inequality follows by the arguments 
given in the proof of \cite[Lemma~3.4]{BCDE21}. This completes the proof 
of~\eqref{eq:disc_trace_OK}.

(2) Proof of~\eqref{eq:disc.poinc}. Let $(T,i)\in\pmeshOK$ and $\phi\in \P^{\ell}(T^i;\R)$.
Let $\langle\phi\rangle_{\ball(T,i)}$ denote the mean-value of $\phi$ in $\ball(T,i)$. 
Since $(I-\projS)(\phi^+)=(I-\projS)\big((\phi-\langle\phi\rangle_{\ball(T,i)})^+\big)$,
invoking the $L^2$-stability of $\projS$ followed by~\eqref{eq:disc_trace_OK} gives
\begin{align*}
\sum_{S\in\{T\}\cup \N_i^{-1}(T)} \!\! h_S^{-\frac12} \|(I-\projS)(\phi^+)\|_{\dSi}
& \le \!\!\sum_{S\in\{T\}\cup \N_i^{-1}(T)}\!\! h_S^{-\frac12} \|(\phi-\langle\phi\rangle_{\ball(T,i)})^+\|_{\dSi}
\ifHAL \cuthere \fi
\lesssim h_T^{-1} \|\phi-\langle\phi\rangle_{\ball(T,i)}\|_{T^i},
\end{align*}
where we also used the mesh shape-regularity which implies that $h_T\lesssim h_S\lesssim h_T$
for all $S \in \Delta(T)$. The bound~\eqref{eq:disc.poinc} now follows
from the discrete Poincar\'e inequality established in~\cite[Lemma~3.4]{BCDE21}.
\end{proof}

\subsection{Proof of Lemma~\ref{lem:approx.IkT}}

\begin{proof}
Let $v\in H^s(\Omega_1\cup \Omega_2)$ with $s\in(\frac32,k+2]$ 
and let $(T,i)\in\pmeshOK$. Set $\tilde v_i:=E_i^s(v_i)$.

(1) Recall that the ball $\ball(T,i)$ of diameter $\vartheta h_T$ is a subset of $T^i$ since
$(T,i)\in\pmeshOK$. Let $\langle v_i\rangle_{\ball(T,i)}$ denote the mean-value of $v_i$ in $\ball(T,i)$. In this step, we prove that, for all $S\in\{T\}\cup \N_i^{-1}(T)$,
\begin{equation} \label{eq:Poinc.S}
\|\tilde v_i-\langle v_i\rangle_{\ball(T,i)}\|_{S} \lesssim h_T \|\nabla \tilde v_i\|_{\Delta(T)}.
\end{equation}
We have $\|\tilde v_i-\langle v_i\rangle_{\ball(T,i)}\|_{S} \le \|\tilde v_i
-\langle v_i\rangle_{\ball(T,i)}\|_{\Delta(T)}$ since, by construction, $S\subset \Delta(T)$. 
Let $\langle \tilde v_i\rangle_{\Delta(T)}$ denote the mean-value of $\tilde v_i$ on $\Delta(T)$.
Invoking \cite[Lemma~5.7]{ErnGu:17_quasi} gives
\[
\|\tilde v_i -\langle \tilde v_i\rangle_{\Delta(T)}\|_{\Delta(T)} \lesssim
h_T \|\nabla \tilde v_i\|_{\Delta(T)}.
\]
Moreover, letting $\partial \ball(T,i)$ denote the boundary of the ball $\ball(T,i)$ and since
$\ball(T,i)\subset \Delta(T)$, we have
\begin{align*}
\|\langle v_i\rangle_{\ball(T,i)} -\langle \tilde v_i\rangle_{\Delta(T)}\|_{\Delta(T)} &=
|\Delta(T)|^{\frac12} |\partial\ball(T,i)|^{-\frac12} \|\langle v_i\rangle_{\ball(T,i)} -\langle \tilde v_i\rangle_{\Delta(T)}\|_{\partial\ball(T,i)} \\
&\le |\Delta(T)|^{\frac12} |\partial\ball(T,i)|^{-\frac12} \big( \|v_i-\langle v_i\rangle_{\ball(T,i)}\|_{\partial\ball(T,i)} + \|\tilde v_i-\langle \tilde v_i\rangle_{\Delta(T)}\|_{\partial\ball(T,i)}\big),
\end{align*}
since $\tilde v_i|_{\partial\ball(T,i)}=v_i|_{\partial\ball(T,i)}$.
Invoking a multiplicative trace inequality from $\partial\ball(T,i)$ to $\ball(T,i)$ gives
\[
\|v_i-\langle v_i\rangle_{\ball(T,i)}\|_{\partial\ball(T,i)} \lesssim h_T^{-\frac12}
\|v_i-\langle v_i\rangle_{\ball(T,i)}\|_{\ball(T,i)} + h_T^{\frac12} \|\nabla v_i\|_{\ball(T,i)},
\]
and the Poincar\'e inequality in $\ball(T,i)$ then gives
\[
\|v_i-\langle v_i\rangle_{\ball(T,i)}\|_{\partial\ball(T,i)} \lesssim
h_T^{\frac12} \|\nabla v_i\|_{\ball(T,i)}.
\]
The same arguments, together with $\ball(T,i) \subset \Delta(T)$ and the mesh shape-regularity, lead to
\[
\|\tilde v_i-\langle \tilde v_i\rangle_{\Delta(T)}\|_{\partial\ball(T,i)} \lesssim h_T^{\frac12} \|\nabla \tilde v_i\|_{\Delta(T)}.
\]
Combining the above bounds and since $|\Delta(T)|^{\frac12} |\partial\ball(T,i)|^{-\frac12}\lesssim h_T^{\frac12}$ proves~\eqref{eq:Poinc.S}.

(2) The higher-order version of~\eqref{eq:Poinc.S} is established by invoking the Morrey
polynomial of $v_i$ based on mean-values of higher-order derivatives of $v_i$ (see, e.g.,
the proof of \cite[Lemma~5.6]{ErnGu:17_quasi}). Omitting the details for brevity, this
gives, for all $0\le m < s$, a polynomial $q_m(v_i) \in \P^{k+1}(T^i;\R)$ such that, for all $S\in\{T\}\cup \N_i^{-1}(T)$,
\begin{equation} \label{eq:Poinc.S.higher}
|\tilde v_i-q_m(v_i)|_{H^m(S)} \lesssim h_T^{s-m} |\tilde v_i|_{H^s(\Delta(T))}.
\end{equation}

(3) Since $I_{T^i}^{k+1}(q_0(v_i))=q_0(v_i)$, the triangle inequality gives
\[
\|\tilde v_i-I_{T^i}^{k+1}(v_i)^+\|_{S} \le \|\tilde v_i-q_0(v_i)\|_{S} + \|I_{T^i}^{k+1}(v_i-q_0(v_i))^+\|_{S}.
\]
The first term on the right-hand side is estimated by using~\eqref{eq:Poinc.S.higher} with $m=0$. 
For the second term, the discrete inverse inequality~\eqref{eq:disc_trace_OK} gives
\[
\|I_{T^i}^{k+1}(v_i-q_0(v_i))^+\|_{S} \lesssim \|I_{T^i}^{k+1}(v_i-q_0(v_i))\|_{T^i} 
\le \|I_{T^i}^{k+1}(v_i-q_0(v_i))\|_{T} \le \|\tilde v_i-q_0(v_i)\|_T,
\] 
where we used the $L^2$-orthogonality of $I_{T^i}^{k+1}$ in $T$. 
Invoking again~\eqref{eq:Poinc.S.higher} with $m=0$ and combining the above two bounds proves that
\[
\sum_{S\in\{T\}\cup\N_i^{-1}(T)} \|\tilde v_i-I_{T^i}^{k+1}(v_i)^+\|_{S} 
\lesssim h_T^s |\tilde v_i|_{H^s(\Delta(T))}.
\]
A similar reasoning with $q_1(v_i)$ in~\eqref{eq:Poinc.S.higher} also gives
\[
\sum_{S\in\{T\}\cup\N_i^{-1}(T)} h_S \|\nabla(\tilde v_i-I_{T^i}^{k+1}(v_i)^+)\|_{S} 
\lesssim h_T^{s} |\tilde v_i|_{H^s(\Delta(T))}.
\]
Altogether, this proves that
\begin{multline*}
\sum_{S\in\{T\}\cup\N_i^{-1}(T)} \Big\{ \|v_i-I_{T^i}^{k+1}(v_i)^+\|_{S^i} 
+ h_S \|\nabla(v_i-I_{T^i}^{k+1}(v_i)^+)\|_{S^i}\Big\} \\
\le \sum_{S\in\{T\}\cup\N_i^{-1}(T)} \Big\{ \|\tilde v_i-I_{T^i}^{k+1}(v_i)^+\|_{S} 
+ h_S \|\nabla(\tilde v_i-I_{T^i}^{k+1}(v_i)^+)\|_{S} \Big\}
\lesssim h_T^{s} |\tilde v_i|_{H^s(\Delta(T))}.
\end{multline*}

(4) Invoking a multiplicative trace inequality on all the faces composing $\dSi$
for all $S\in\{T\}\cup \N_i^{-1}(T)$ \rev{(observe that the aggregated cell $T\cup \N_i^{-1}(T)$ satisfies the required properties)}, and since $s>\frac32$ by assumption,
we infer from the above bounds that 
\[
\sum_{S\in\{T\}\cup\N_i^{-1}(T)} \Big\{ h_S^{\frac12}\|v_i-I_{T^i}^{k+1}(v_i)^+\|_{\dSi}  
+ h_S^{\frac32} \|\nabla(v_i-I_{T^i}^{k+1}(v_i)^+)\|_{\dSi} \Big\}
\lesssim h_T^{s} |\tilde v_i|_{H^s(\Delta(T))}.
\]
This completes the proof of~\eqref{eq:approx.i}.

(5) To prove~\eqref{eq:approx.jump}, we bound the jump across $\SG$ by the triangle inequality, so that we need to estimate the traces on $\SG$ from both sides of $\SG$, for all $S\in\{T\}\cup \N_i^{-1}(T)$. On each side, we invoke a multiplicative trace inequality on $\SG$. 
This inequality is established with a slight adaptation of
the arguments in the proof of \cite[Lemma~3.3]{BE18}, whereby we make a specific 
choice for the apex of the cone, say $C(\SG)$, considered in that proof.

(5a) Assume first that $S=T$. The triangle inequality gives
\begin{align*}
h_T^{\frac12} \| \llbracket v-I_T^{k+1}(v) \rrbracket_{\Gamma}\|_{\TG}
\le h_T^{\frac12} \|v_i-I_{T^i}^{k+1}(v_i)\|_{\TG} + h_T^{\frac12} \|v_{\ibar}-I_{T^{\ibar}}^{k+1}(v_{\ibar})\|_{\TG}. 
\end{align*}
For the first term on the right-hand side, the apex of the cone $C(\TG)$
is taken to be the center of the ball $B(T,i)$. Since $C(\TG)\subset \conv(T)\subset \Delta(T)$ by the assumption~\eqref{eq:conv_delta}, we infer that
\[
h_T^{\frac12} \|v_i-I_{T^i}^{k+1}(v_i)\|_{\TG} \lesssim \|\tilde v_i-I_{T^i}^{k+1}(v_i)^+\|_{\Delta(T)}
+ h_T \|\nabla(\tilde v_i-I_{T^i}^{k+1}(v_i)^+)\|_{\Delta(T)},
\]
and adapting the above arguments gives
\[
h_T^{\frac12} \|v_i-I_{T^i}^{k+1}(v_i)\|_{\TG} \lesssim h_T^{s} |\tilde v_i|_{H^s(\Delta(T))}.
\]
To bound $h_T^{\frac12} \|v_{\ibar}-I_{T^{\ibar}}^{k+1}(v_{\ibar})\|_{\TG}$, we can consider
the same cone to derive the multiplicative trace inequality, so that
\[
h_T^{\frac12} \|v_{\ibar}-I_{T^{\ibar}}^{k+1}(v_{\ibar})\|_{\TG} \lesssim h_T^{s} |\tilde v_{\ibar}|_{H^s(\Delta(T))},
\]
where $\tilde v_{\ibar}:=E_{\ibar}^s(v_{\ibar})$. Altogether, this proves that
\[
h_T^{\frac12} \| \llbracket v-I_T^{k+1}(v) \rrbracket_{\Gamma}\|_{\TG} \lesssim \sum_{i\in\{1,2\}} h_T^{s} |\tilde v_i|_{H^s(\Delta(T))}.
\]

(5b) Assume now that $S\in \N_i^{-1}(T)$. Then, the apex of the cone, $C(\SG)$, to establish
the multiplicative trace inequality is taken to be the center of the ball $B(S,\ibar)$ 
(indeed, if $(S,i)
\in \pmeshKO$, then $(S,\ibar)\in \pmeshOK$). This yields 
$C(\SG) \subset \conv(S) \subset \Delta(S) \subset 
\Delta_2(T)$. Invoking the same arguments as above then yields
\[
h_S^{\frac12} \| \llbracket I_T^{k+1}(v)^+-v \rrbracket_{\Gamma}\|_{\SG} \lesssim \sum_{i\in\{1,2\}} h_T^{s} |\tilde v_i|_{H^s(\Delta_2(T))}.
\]
This completes the proof.
\end{proof}

\subsubsection*{Acknowledgments}

EB was partially supported by the EPSRC grants EP/T033126/1 and EP/V050400/1. For the purpose of open access, the author has applied a Creative Commons Attribution (CC BY) licence to any Author Accepted Manuscript version arising.
RM is thankful to Matteo Cicuttin (Politecnico Torino, Italy) and Guillaume Delay (Sorbonne University, France) for their
instrumental help in code development, and to Zhaonan Dong (INRIA Paris) for stimulating discussions.

\bibliography{biblio.bib}
\bibliographystyle{abbrv}

\end{document}